\newtheorem{theorem}{Theorem}[section]
\newtheorem{lemma}[theorem]{Lemma}
\newtheorem{proposition}[theorem]{Proposition}
\newtheorem{corollary}[theorem]{Corollary}
\theoremstyle{definition}
\newtheorem{definition}[theorem]{Definition}
\newtheorem{remark}[theorem]{Remark}
\numberwithin{equation}{section}
\acrodef{LDP}{Large Deviation Principle}
\newcommand{\E}{\mathbf{E}}	% expectation
\newcommand{\be}{\begin{equation}}
\newcommand{\ee}{\end{equation}}
\newcommand{\bea} {\begin{array}{rl}}
\newcommand{\eea} {\end{array}}
\newcommand{\bepa}{\left\{ \begin{array}{l}}
\newcommand{\eepa} {\end{array}\right.}
\newcommand\norm[1]{\left\Vert#1\right\Vert}
\newcommand{\R}{\mathbb{R}} % real numbers
\newcommand{\N}{\mathbb{N}}
\newcommand{\ds}{\displaystyle}
\newcommand{\T}{\mathbb{T}}
\renewcommand{\hat}{\widehat}
\renewcommand{\bar}{\overline}
\newcommand{\bd}{{\bf d}}
\newcommand{\ov}{\overline}
\newcommand{\bx}{\bm{x}}
\newcommand{\bX}{\bm{X}}
\newcommand{\cP}{\mathcal{P}}
\newcommand{\eps}{\epsilon}
\newcommand{\tr}{\text{tr}}
\newcommand{\wt}{\widetilde}
\newcommand{\by}{\bm{y}}
\newcommand{\balpha}{\bm{\alpha}}
\newcommand{\bzero}{\bm{0}}
\newcommand{\sub}{\cP_{\text{sub}}}
\newcommand{\cN}{\mathcal{N}}
\title[MFC with absorption]{Mean field control with absorption}
\author[P. Cardaliaguet]{Pierre Cardaliaguet
\address{(P. Cardaliaguet) Universit\'e Paris Dauphine, Place du Mar\'echal de Lattre de Tassigny, 75016 Paris, France
}\email{cardaliaguet@ceremade.dauphine.fr
}}
\author[J. Jackson]{Joe Jackson\address{(J. Jackson) The University of Chicago, Eckhart Hall, 5734 S University Ave, Chicago, IL 60637, USA}
\email{jsjackson@uchicago.edu}}
\author[P.E. Souganidis]{Panagiotis E. Souganidis\address{(P.E. Souganidis) The University of Chicago, Eckhart Hall, 5734 S University Ave, Chicago, IL 60637, USA}
\email{souganidis@uchicago.edu}}
\thanks{J. Jackson was supported by the NSF under Grant No. DMS2302703. P.C. was partially supported by P.S’s Air Force Office for Scientific Research grant FA9550-18-1-0494 and by the Agence Nationale de la Recherche (ANR), project ANR-22-CE40-0010 COSS. P. E. Souganidis was partially supported by the NSF grants  DMS-2452972 and DMS-2153822 , the Office for Naval Research grant N000141712095 and the Air Force Office for Scientific Research grant FA9550-18-1-0494.}
\begin{document}
	\begin{abstract}
            In this paper we study a mean field control problem in which particles are absorbed when they reach the boundary of a smooth domain. The value of the $N$-particle problem is described by a hierarchy of Hamilton-Jacobi equations which are coupled through their boundary conditions. The value function of the limiting problem; meanwhile, solves a Hamilton-Jacobi equation set on the space of sub-probability measures on the smooth domain, i.e. the space of non-negative measures with total mass at most one. Our main contributions are (i) to establish a comparison principle for this novel infinite-dimensional Hamilton-Jacobi equation and (ii) to prove that the value of the $N$-particle problem converges in a suitable sense towards the value of the limiting problem as $N$ tends to infinity. 
	\end{abstract}
	
%	\subjclass[2020]{}
%	\keywords{}
	
	\maketitle
	{
			%\hypersetup{linkcolor=black}
			%\setcounter{tocdepth}{1}
			%\tableofcontents
		}
     \setcounter{tocdepth}{1}
\tableofcontents

\section{Introduction.}

The aim of this paper is to study a stochastic control problem in which a central planner controls a large number of particles, each of which is absorbed if it hits the boundary of  a smooth, bounded domain $\Omega \subset \R^d$. Similar models have been studied recently in the context of mean field games, but this work is the first paper to study the convergence as the (initial) number of particles increases. The main novelty of this problem compared to classical mean field control is that the total mass in the system can decrease over time. We note that absorption is not the only reasonable mechanism through which mass can decrease, and this paper is the first in a series devoted to a rigorous treatment of such problems via viscosity solutions methods. In particular, in the forthcoming work \cite{CJSstopping}, the authors will build on the methods introduced here to treat a problem in which particles are removed at the discretion of the central planner, through the choice of stopping times.

\subsection{Problem statement.}
%
%Throughout the paper, $\Omega$ is  a smooth, bounded domain and we denote by $\sub = \sub(\Omega)$  the space of sub-probability measures on $\Omega$, %that is,  the space of non-negative Borel measures $m$ on $\Omega$ with $0 \leq m(\Omega) \leq 1$.
%
%\subsubsection{The $N$-particle problem}
 For each $N \in \N$ (the initial number of particles) and $K \in \N$ (the number of remaining particles), we are interested in the stochastic control problem whose value function $V^{N,K} : [0,T] \times \ov{\Omega}^K \to \R$ is given, for each $t_0 \in [0,T]$ and $\bx_0 = (x_0^1,...,x_0^K) \in \ov{\Omega}^K$, by the formula
\begin{align} \label{def.vnn}
    V^{N,K}(t_0,\bx_0)  = \inf_{\bm \alpha} \E\bigg[ \int_{t_0}^T \Big( \frac{1}{N} \sum_{i = 1}^K L\big(X_t^i, \alpha_t^i \big)1_{t < \tau^i} + F\big(m_t^{N,K}\big) \Big)dt  + G\big(m_T^{N,K}\big) \bigg]. 
\end{align}
In \eqref{def.vnn}, the $\ov{\Omega}^K$-valued state process $\bX = (X^1,...,X^K)$ evolves according to 
\begin{align} \label{x.dynamics}
    dX_t^i = \alpha_t^i dt + \sqrt{2} dW_t^i, \quad X^i_{t_0} = x_0^i, % \quad \tau^i = \inf\{ t \geq t_0 : X_t^i \in \partial \Omega \}, 
\end{align}
the stopping time $\tau^i$ is the first time the $i$-th component $X^i$ of the process $\bX$ hits $\partial \Omega$, that is, 
\begin{align*} 
\tau^i = \inf\{ t \geq t_0 : X_t^i \in \partial \Omega \},
\end{align*}
and 
$m_t^{N,K}$ is a random element of $\sub = \sub(\Omega)$,  the space of sub-probability measures on $\Omega$,  given by 
\begin{align*}
    m_t^{N,K} = \frac{1}{N} \sum_{i = 1}^K \delta_{X_t^i} 1_{t < \tau^i}. 
\end{align*}
The infimum in the definition of $V^{N,K}$ is taken over all $(\R^d)^K$-valued processes $\balpha = (\alpha^1,...,\alpha^K)$ which are square-integrable and progressively measurable with respect to the filtration generated by the independent Brownian motions $W^1,...,W^K$.

The data for this family of optimization problems consists of three functions 
\begin{align*}
    L : \R^d \times \R^d \to \R, \quad F, G : \sub \to \R.
\end{align*}
In addition, the Lagrangian $L$ determines a Hamiltonian $H : \R^d \times \R^d \to \R$ via the standard formula
\begin{align*}
    H(x,p) = \sup_{a \in \R^d} \Big( - L(x,a) - a \cdot p\Big).
\end{align*}
Precise assumptions on $L$, $F$, and $G$ will be made in Section \ref{sec.prelim} below.
% In order to find a PDE characterization of the function $V^{N,N}$, it is necessary  to understand  its behavior on the lateral boundary $[0,T] \times  \partial \big( \Omega^N \big)$, where
% \begin{align*}
% \partial \big( \Omega^N \big) = \Big\{\bx = (x^1,...,x^N) \in \ov{\Omega}^N : x^i \in \partial \Omega \text{ for some } i = 1,...,N \Big\}.
%\end{align*}
%This leads us to consider the same optimization problem when there are only $K \in \{1,...,N\}$ particles remaining, that is,  we define for each $N \in \N$ and $K = 1,...,N$ a function $V^{N,K} : [0,T] \times \ov{\Omega}^K \to \R$ by the formula
%\begin{align*}
%    V^{N,K}(t_0,\bx_0)  = \inf_{\bm \alpha} \E\bigg[ \int_{t_0}^T \Big( \frac{1}{N} \sum_{i = 1}^K L\big(X_t^i, \alpha_t^i\big)1_{t < \tau^i} + F(m_t^{N,K}) \Big)dt + G(m_T^{N,K}) \bigg], 
%\end{align*}
%where the $\ov{\Omega}^K$-valued state process $\bX = (X^1,...,X^K)$ again evolved according to \eqref{x.dynamics} (but this time with $i$ ranging from $1$ to $K$),
%with $\tau^i = \inf\{ t \geq t_0 : X_t^i \in \partial \Omega \}$ 
%and $m_t^{N,K} = \frac{1}{N} \sum_{i = 1}^K \delta_{X_t^i} 1_{t < \tau^i}$. 

 We emphasize that we are primarily interested in the optimization problem whose value function is $V^{N,N}$, but the entire collection $(V^{N,K})_{K = 1,...,N}$ are required in order to obtain an analytical description of the value. In particular, for each fixed $N \in \N$, the collection of value functions $(V^{N,K})_{K = 1,...,N}$ will solve a ``hierarchy" of finite-dimensional HJB equations of the form 
\begin{align} \label{hjbnk} \tag{$\text{HJB}_{N,K}$}
    \begin{cases}
      \ds   - \partial_t V^{N,K} - \sum_{i = 1}^K \Delta_{x^i} V^{N,K} + \frac{1}{N} \sum_{i = 1}^K H\big(x^i, ND_{x^i} V^{N,K} \big)  = F\big(m_{\bx}^{N,K}\big), \quad (t,\bx) \in [0,T) \times \Omega^K, \vspace{.2cm} \\ \ds 
        V^{N,K}(T,\bx) = G\big(m_{\bx}^{N,K}\big), \quad \bx \in \Omega^K, \vspace{.2cm} \\ \ds
        V^{N,K}(t,\bx) = V^{N,K-1}(t,\bx^{-i}), \quad \text{ if } x^i \in \partial \Omega, \quad K = 2,...,N,
       \vspace{.2cm} \\
       \ds  V^{N,1}(t,x) = V^{N,0}(t), \quad x \in \partial \Omega,  
    \end{cases}
\end{align}
where $V^{N,0} : [0,T] \to \R$ is defined by
\begin{align} \label{hjbnk0} \tag{$\text{HJB}_{N,0}$}
\ds  V^{N,0}(t) \coloneqq G(\bm 0) + (T-t) F(\bzero) \text{ for } t\in [0,T].
\end{align}
In \eqref{hjbnk} and throughout the paper, we use the following notational conventions: For $\bx \in \ov{\Omega}^K$ and $i \in \{1,...,K\}$, we write $\bx^{-i}$ for the element of $\ov{\Omega}^{K-1}$ obtained by removing the $i^{\text{th}}$ coordinate of $\bx$. That is, if $\bx = (x^1,...,x^K)$, then $\bx^{-i} = (x^1,...,x^{i-1},x^{i+1},...,x^K)$.  In addition, given $K \in \{1,...,N\}$ and $\bx \in \ov{\Omega}^K$, we write $m_{\bx}^{N,K} = \frac{1}{N} \sum_{i = 1}^K \delta_{x^i} \in \sub$.  Finally, $\bm{0}$ represents the zero measure in $\sub$. 

The lateral boundary conditions
\begin{align}\label{compatcond}
    V^{N,K}(t,\bx) = V^{N,K-1}(t, \bx^{-i}) \  \text{ if }  \  x^i \in \partial \Omega, \qquad  V^{N,1}(t,x) = V^{N,0}(t), \,\, \text{ if } x \in \partial \Omega
\end{align}
appearing in \eqref{hjbnk} provide the only coupling between the equations, and can be understood in terms of dynamic programming; when a particle reaches the boundary, the central planner faces a new optimization problem with one fewer particle.

%The final boundary condition \eqref{bottom.boundary}, together with the other lateral boundary conditions, guarantees that $V^{N,N}$ satisfies 
%\begin{align*}
%    V^{N,N}(t,\bx) = V^0(t) = G(\bzero) + (T-t) F(\bzero) \quad \text{ if } x^i \in \partial \Omega \text{ for all } i = 1,...,N,  
%\end{align*}
%which reflects the fact that, if all the particles start on the boundary at time $t$, then the empirical measure $m^N$ introduced above satisfies $m_s^N = \bzero$ for all $s \in [t,T]$.

%\subsubsection{The mean field problem}

Formally, we expect that, by sending $N \to \infty$, we should obtain a MFC problem with  value function $U : [0,T] \times \sub \to \R$ given by 
\begin{equation}\label{valuefunction}
    U(t_0,m_0) = \inf_{(m,\alpha)\in \mathcal A(t_0,m_0)} \bigg\{ \int_{t_0}^T \Big( \int_{\Omega} L\big( x, \alpha(t,x) \big)m_t(dx) + F(m_t) \Big) dt + G(m_T) \bigg\}, 
\end{equation}
where $\mathcal A(t_0,m_0)$ is the set of pairs $(m,\alpha)$ consisting of a curve $m: [t_0,T] \to  \sub$ and a measurable map $\alpha : [t_0,T] \times \Omega \to \R^d$ such that 
%for each $t\in [t_0,T]$, $m_t\in \sub$, 
%$m$ is a curve $[t_0,T] \ni t \mapsto m_t \in \sub$, 
%$\alpha : [t_0,T] \times \Omega \to \R^d$ is a measurable map such that
\begin{align*}
    \int_{t_0}^T \int_{\Omega} |\alpha(t,x)|^2 m_t(dx) dt < \infty, 
\end{align*}
and $m$ satisfies, in the weak sense, the initial-boundary value problem
\begin{align*}
    \partial_t m = \Delta m - \text{div}(m\alpha) \ \text{in} \  (t_0,T) \times \Omega,  \quad m = 0 \text{ on } (0,T) \times \partial \Omega, \quad m_{t_0} = m_0.
\end{align*}
The last point means that, for any $t_0 \leq t_1 < t_2 \leq T$ and any test function $\phi$ satisfying 
\begin{align*}
    \phi \in C^{1,2}([t_1,t_2] \times \ov{\Omega}), \quad \phi|_{[t_1,t_2] \times \partial \Omega} = 0, 
\end{align*}
we have 
\begin{align*}
    \int_{\Omega} \phi(t_2,x) m_{t_2}(dx) - \int_{\Omega} \phi(t_1,x) m_{t_1}(dx) = \int_{t_1}^{t_2} \int_{\Omega} \Big(\partial_t \phi+ \Delta \phi + \alpha \cdot D \phi\Big)  m_t(dx) dt.
\end{align*}
This is a standard MFC problem, except for the fact that, since $m$ solves  a Fokker-Planck with zero Dirichlet boundary conditions, its  total mass can decrease over time. 

We expect that the value function $U$ satisfies the infinite-dimensional Hamilton-Jacobi equation 
\begin{align} \label{hjbinf} \tag{$\text{HJB}_{\infty}$}
    \begin{cases}
        \ds - \partial_t U(t,m) - \int_{\Omega} \Delta_x \frac{\delta U}{\delta m}(t,m,x) m(dx) + \int_{\Omega} H\Big(x,  D_x \frac{\delta U}{\delta m}(t,m,x) \Big) m(dx) \vspace{.2cm} \\
       \hspace{6cm}  = F( m ) \ \ \text{in}  \ \  [0,T) \times \sub, 
         \\
       \ds  U(T,\cdot) = G  \ \text{in} \  \sub, \ \ \text{ and} \ \   \frac{\delta U}{\delta m}= 0 \ \text{in} \  [0,T) \times \sub \times \partial \Omega.   \end{cases}
\end{align}
%\begin{remark} \label{rmk.boundaryconditions}

We note that in \eqref{hjbinf}, the analogue of the boundary condition $V^{N,1}(t,x) = V^{N,0}(t) = G(\bzero) + (T-t) F(\bzero)$ for $x \in \partial \Omega$ is enforced by the equation. Indeed,  setting $m = \bzero$, we find $- \partial_t U(t,\bzero) = F(\bzero)$,
which, together with the terminal conditions, gives $U(t,\bzero) = G(\bzero) + (T-t) F(\bzero)$. We also claim that the condition
\begin{align} \label{boundary.cond}
    \frac{\delta U}{\delta m}(t,m,x) = 0 \text{ for } x \in \partial \Omega
\end{align}
is a natural analogue of the lateral boundary condition $V^{N,K} = V^{N,K-1}$ on $[0,T] \times \partial(\Omega^K)$. Indeed, if $U$ were smooth, then $\eqref{boundary.cond}$ would guarantee that $\lim_{x^i \to \partial \Omega} U(t,m_{\bx}^{N,K}) = U(t,m_{\bx^{-i}}^{N,K-1})$,
so that the maps $U^{N,K}(t,\bx) = U(t,m_{\bx}^{N,K})$
satisfy the same boundary conditions as $(V^{N,K})_{K = 1,...,N}$.
%\end{remark}

\subsection{The main results.}

We present next our main results, which, roughly speaking, show that $U$ is the unique viscosity solution of (a truncated version of) \eqref{hjbinf}, and $V^{N,N}$ converges to $U$.  The precise technical assumptions on $L$, $F$, and $G$ will be discussed in Section \ref{sec.prelim}, along with the definition of the metric $\bd$ which appears in the statement of Theorem \ref{thm.value}. Meanwhile, the definition of viscosity sub/supersolutions of \eqref{hjbinf} is introduced in Section \ref{sec.visc}. We begin by stating a comparison result.

\begin{theorem}[Comparison] \label{thm.comparison}
Suppose that \eqref{assump.main} and \eqref{assump.main1} hold, and that in addition $H$ is Lipschitz. If  $V^-$ is  a viscosity subsolution to \eqref{hjbinf} and  $V^+$ is a viscosity supersolution,  then $V^-(t,m) \leq V^+(t,m)$ for each $t \in [0,T]$ and $m \in \sub$. 
\end{theorem}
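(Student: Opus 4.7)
The plan is to carry out a Crandall-Ishii-Lions style doubling of variables on $[0,T] \times \sub$. The key structural features I would exploit are that $\sub$ is compact under the weak-$*$ topology (which the metric $\bd$ metrizes, since $\Omega$ is bounded), and that $\bd$ is defined through duality with $1$-Lipschitz functions vanishing on $\partial \Omega$. The latter ensures that any test function built from $\bd^2$ automatically satisfies the hidden boundary condition $\delta U / \delta m = 0$ on $\partial \Omega$ that is built into the notion of a viscosity sub/supersolution of \eqref{hjbinf}.

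Arguing by contradiction, suppose $\theta = \sup_{(t,m)}(V^-(t,m) - V^+(t,m)) > 0$. I would first replace $V^+$ by $V^+ + \eta(T - t)$, which is a supersolution of a strictly stronger equation (right hand side $F(m) + \eta$), so that the shifted maximum is still strictly positive but attained at some interior time $t^* < T$. I would then double the time and measure variables by considering
\begin{equation*}
\Psi_\e(t,s,m,m') = V^-(t,m) - V^+(s,m') - \eta(T - s) - \frac{(t-s)^2}{2\e} - \frac{1}{\e}\rho_\delta(m,m'),
\end{equation*}
where $\rho_\delta$ is a $\delta$-regularization of $\bd^2$ whose flat derivatives with respect to either measure argument are $C^2$ in $x$, vanish on $\partial\Omega$, and satisfy the antisymmetry $\tfrac{\delta \rho_\delta}{\delta m}(m,m',\cdot) = -\tfrac{\delta \rho_\delta}{\delta m'}(m,m',\cdot)$ along the diagonal. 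By compactness of $[0,T]^2 \times \sub^2$, the supremum is attained at some $(t_\e, s_\e, m_\e, m_\e')$; standard penalty estimates give $|t_\e - s_\e|^2/\e \to 0$ and $\rho_\delta(m_\e, m_\e')/\e \to 0$ as $\e \to 0$, with subsequential convergence to $(t^*, t^*, m^*, m^*)$ after then sending $\delta \to 0$.

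Plugging the penalty-derived test functions into the viscosity sub- and super-solution inequalities, and using the antisymmetry of $\rho_\delta$ so that the same function $\xi_\e(x) = \tfrac1\e \tfrac{\delta \rho_\delta}{\delta m}(m_\e, m_\e', x)$ appears in both inequalities, subtraction yields an inequality of the schematic form
\begin{equation*}
\eta \leq \bigl|F(m_\e) - F(m_\e')\bigr| + \Bigl|\int_\Omega H(x, D_x\xi_\e)\, d(m_\e - m_\e')(x)\Bigr| + \Bigl|\int_\Omega \Delta_x \xi_\e\, d(m_\e - m_\e')(x)\Bigr|.
\end{equation*}
The Lipschitz continuity of $F$ controls the first term by $C\,\bd(m_\e, m_\e')$. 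The Lipschitz-in-$p$ hypothesis on $H$ (precisely why that extra assumption appears in the statement) controls the second term by $C\|D_x\xi_\e\|_\infty \cdot \bd(m_\e, m_\e')$. Sending $\e \to 0$ and then $\delta \to 0$, all three residuals should vanish, yielding $\eta \leq 0$ and the desired contradiction.

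The main obstacle is controlling the second-order term $\int \Delta_x \xi_\e\, d(m_\e - m_\e')$, since $\|D_x^2 \xi_\e\|_\infty$ may blow up as $\e \to 0$. In finite dimensions such residuals are handled by the Crandall-Ishii lemma, but no direct analogue is available on $\sub$. The remedy is to engineer $\rho_\delta$ so that the growth of $\|D_x^2 \xi_\e\|_\infty$ is strictly slower than the decay of $\bd(m_\e, m_\e')$. A plausible construction applies the heat semigroup on $\Omega$ with Dirichlet boundary conditions to the Kantorovich potentials dual to $\bd(m, m')$ (which preserves both the vanishing on $\partial \Omega$ and the antisymmetry), then sets $\rho_\delta(m, m')$ as a squared functional of these regularized potentials. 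Producing such a $\rho_\delta$ with the right balance of smoothness, antisymmetry, and coercivity in $\bd$, and then correctly balancing $\e$ and $\delta$, is the technical core of the proof.
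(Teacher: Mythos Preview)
Your proposal misses the structure of the viscosity definition used in this paper, and this gap is fatal as written. In Definition~\ref{defn.viscositysoln}, the test functions are \emph{not} just smooth functions $\Phi$: they are always of the form $\Phi(t,m) + \delta\|m\|_2^2$ for some $\delta>0$, the maximum/minimum is taken over $[0,T]\times(\sub\cap L^2)$ rather than all of $\sub$, and the conclusion includes both that the touching point satisfies $m_0\in H_0^1$ and that an extra term $\pm\delta\int_\Omega|Dm_0|^2\,dx$ appears in the inequality (with a $C\delta\|m_0\|_2^2$ error on the right). Your doubling scheme $\Psi_\e$ contains no $\delta\|m\|_2^2$ or $\delta\|m'\|_2^2$ penalization, so you cannot invoke the sub/supersolution tests at all. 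Relatedly, the compactness you appeal to is in $(\sub,\bd)$, but the relevant optimization is over $\sub\cap L^2$; without the $L^2$ penalty there is no reason the supremum is attained there.

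The second gap is the treatment of the second-order term, which you acknowledge but do not resolve. Your proposed $\rho_\delta$---heat-semigroup regularized Kantorovich potentials, then squared---is not concretely defined, and Kantorovich potentials are not linear in $(m,m')$, so the claimed antisymmetry $\tfrac{\delta\rho_\delta}{\delta m}=-\tfrac{\delta\rho_\delta}{\delta m'}$ is unclear off the diagonal (where the maximizers actually live). More importantly, you give no mechanism to ensure $\|D_x^2\xi_\e\|_\infty\cdot\bd(m_\e,m_\e')\to 0$. The paper avoids this entirely by penalizing with $\tfrac{1}{\e}\|m-n\|_{H^{-1}}^2$: its Fr\'echet derivative is the function $f_\e\in H_0^1$ solving $f_\e-\Delta f_\e=(m_\e-n_\e)/\e$, so $\int_\Omega \Delta_x f_\e\,(m_\e-n_\e)\,dx$ can be computed \emph{exactly} and yields a helpful negative term $-\tfrac{1}{\e}\|m_\e-n_\e\|_2^2$ that absorbs the Hamiltonian residual via Young's inequality. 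The $\delta\int|Dm|^2$ and $\delta\int|Dn|^2$ terms from the definition then sit harmlessly on the good side, and the remaining $C\delta\|m\|_2^2$ errors vanish as $\delta\to 0$ after $\e\to 0$. None of this machinery is present in your outline; you would need to replace the vague $\rho_\delta$ by the $H^{-1}$ penalty (and prove the analogue of Proposition~\ref{prop.equiv} allowing $C^{1,2}(H^{-1})$ test functions), and reinstate the $\delta\|\cdot\|_2^2$ terms mandated by the definition.
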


Notice that Theorem \ref{thm.comparison} requires an extra technical condition that $H$ is Lipschitz. Fortunately, we can prove a priori Lipschitz estimates which allow to truncate the Hamiltonian, so this assumption is not required when we prove the convergence of $V^{N,K}$ to $U$. To clarify this point, we set for each $R > 0$ the Hamiltonian 
\begin{align*}
    H^R(x,p) = \sup_{a \in B_R} \Big\{ - L(x,a) - a \cdot p \Big\},
\end{align*}
which is globally Lipschitz, and consider the truncated equation
\begin{align} \label{hjbinfR} \tag{$\text{HJB}_{\infty,R}$}
    \begin{cases}
        \ds - \partial_t U(t,m) - \int_{\Omega} \Delta_x \frac{\delta U}{\delta m}(t,m,x) m(dx) + \int_{\Omega} H^R\Big(x,  D_x \frac{\delta U}{\delta m}(t,m,x) \Big) m(dx) \vspace{.2cm}  \\
       \hspace{6cm}  = F( m ) \ \ \text{in}  \ \  [0,T) \times \sub, 
        \\
       \ds  U(T,\cdot) = G  \ \text{in} \  \sub, \ \ \text{ and} \ \   \frac{\delta U}{\delta m}= 0 \ \text{in} \  [0,T) \times \sub \times \partial \Omega.   \end{cases}
\end{align}

Our next result allows us to identify the value function $U$ as the unique solution of a PDE.

\begin{theorem}[PDE characterization of the value] \label{thm.value}
    Assume \eqref{assump.main} and \eqref{assump.main1}. Then there is a constant $C$ such that, for all $t,s \in [0,T]$, $m,n \in \sub$, %we have
    \begin{align*}
       | U(t,m) - U(s,n) | \leq C \Big(|t-s|^{1/2} + \bd(m,n) \Big), 
    \end{align*}
    and, moreover,  there is a constant $R_0 > 0$ such that, for all $R > R_0$, $U$ is the unique viscosity solution to \eqref{hjbinfR}. 
%    \begin{align} \label{hjbinfR} \tag{$\text{HJB}_{\infty,R}$}
%    \begin{cases}
%        \ds - \partial_t U(t,m) - \int_{\Omega} \Delta_x \frac{\delta U}{\delta m}(t,m,x) m(dx) + \int_{\Omega} H^R\Big(x,  D_x \frac{\delta U}{\delta m}(t,m,x) \Big) m(dx) = F( m ),
%        pace{.1cm} \\
%        \hspace{2cm} (t,m) \in [0,T) \times \sub, 
%         pace{.1cm} \\
%       \ds  U(T,m) = G(m) \quad m \in \sub, \qquad U(t,\bzero) = G(\bzero), \quad t \in [0,T], pace{.1cm}
%       \\
%       \qquad \qquad \ds \frac{\delta U}{\delta m}(t,m,x) = 0, \quad (t,m,x) \in [0,T) \times \sub \times \partial \Omega. \end{cases}
%\end{align}
\end{theorem}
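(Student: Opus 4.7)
The plan is to prove the two assertions in sequence: first establish the joint Hölder/Lipschitz regularity of $U$ directly from the variational definition \eqref{valuefunction}, and then exploit this regularity to show that $U$ is a viscosity solution of the truncated equation \eqref{hjbinfR} for sufficiently large $R$, at which point uniqueness is immediate from Theorem \ref{thm.comparison}.

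\textbf{Regularity.} For the spatial bound $|U(t,m)-U(t,n)| \leq C \bd(m,n)$ I would use a transport/coupling argument. Starting from a near-optimal pair $(m^\ast,\alpha^\ast) \in \cA(t,m)$, I would construct a competitor in $\cA(t,n)$ by transporting $m$ to $n$ along a near-optimal plan witnessing $\bd(m,n)$; any mass discrepancy between $m$ and $n$ is handled by the absorbing boundary, by routing the excess mass into $\partial\Omega$ at a cost $O(\bd(m,n))$. The Lipschitz regularity of $L, F, G$ in \eqref{assump.main} then controls the cost difference by $C\bd(m,n)$. For the temporal bound, I would invoke the dynamic programming principle: the coercivity implicit in \eqref{assump.main1} (together with the upper bound obtained by plugging $\alpha\equiv 0$) yields a uniform $L^2$-bound on near-optimal controls, which combined with the spatial Lipschitz estimate and standard Fokker--Planck displacement bounds of order $\sqrt{s-t}$ produces $|U(t,m)-U(s,m)|\leq C|t-s|^{1/2}$.

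\textbf{DPP, viscosity property, and truncation.} A routine dynamic programming argument then shows that $U$ is a viscosity solution of \eqref{hjbinf}; the boundary condition $\delta U/\delta m|_{\partial\Omega}=0$ is built into the admissible class of smooth test functions, compatibly with the fact that $\bd$ pairs $\sub$ with $1$-Lipschitz functions vanishing on $\partial\Omega$. Let $\Lambda$ denote the Lipschitz constant of $U$ in $m$ from the previous step. If $\phi$ is a smooth test function touching $U$ from below at $(t_0,m_0)$, then for $m$ near $m_0$,
\[
\phi(t_0,m) - \phi(t_0,m_0) \leq U(t_0,m) - U(t_0,m_0) \leq \Lambda\, \bd(m_0,m).
\]
Testing with $m = m_0+\eps\nu$ for admissible signed measures $\nu$, letting $\eps \to 0^+$, and applying the dual characterization of $\bd$ shows that $x \mapsto \delta\phi/\delta m(t_0,m_0,x)$ is $\Lambda$-Lipschitz and vanishes on $\partial\Omega$, so
\[
\big|D_x \tfrac{\delta\phi}{\delta m}(t_0,m_0,\cdot)\big| \leq \Lambda \quad \text{pointwise on } \Omega.
\]
Since $H(x,p)=H^R(x,p)$ whenever $|p|\leq \Lambda$ and $R\geq \Lambda$, the subsolution inequality for \eqref{hjbinf} at $\phi$ coincides with that for \eqref{hjbinfR}. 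The analogous argument for supersolution tests then shows that $U$ is a viscosity solution of \eqref{hjbinfR} for all $R \geq R_0 := \Lambda$, and uniqueness follows from Theorem \ref{thm.comparison} applied to the globally Lipschitz Hamiltonian $H^R$.

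\textbf{Main obstacle.} The delicate step is the spatial Lipschitz estimate for $U$. Because $\bd$ is dual to $1$-Lipschitz functions vanishing on $\partial\Omega$ (not to all Lipschitz functions), a coupling between $m$ and $n$ must be allowed to send the mass mismatch to the boundary rather than redistribute it within $\Omega$. Constructing admissible pairs $(\tilde m,\tilde\alpha) \in \cA(t,n)$ that realize such a coupling---progressively measurable, square integrable in the weighted sense, and consistent with the Fokker--Planck constraint---while giving a quantitative cost bound matching $\bd(m,n)$, is the main technical hurdle; once that estimate is in place, the remainder of the argument follows a standard mean field control blueprint.
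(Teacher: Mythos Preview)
Your truncation argument has a genuine gap. In Definition \ref{defn.viscositysoln}, the test function that touches $U$ is not $\Phi$ alone but $\Phi \pm \delta\|m\|_2^2$, and the viscosity inequality is stated in terms of $D_x\frac{\delta\Phi}{\delta m}$ for the \emph{smooth part} $\Phi$ only. Your touching argument, carried out correctly, bounds the linear derivative of the full test function, i.e.\ (formally) $\frac{\delta\Phi}{\delta m}(t_0,m_0,\cdot) \pm 2\delta m_0(\cdot)$, by $\Lambda$ in the Lipschitz seminorm. This does \emph{not} yield a pointwise bound on $D_x\frac{\delta\Phi}{\delta m}(t_0,m_0,\cdot)$ that is uniform in $\Phi$ and $\delta$: the definition only guarantees $m_0\in H_0^1$, so $2\delta Dm_0$ is merely in $L^2$, not $L^\infty$, and there is no way to extract a uniform $R_0$ for which $H=H^R$ on the relevant gradients. (A secondary issue: admissible perturbations must stay in $\sub\cap L^2$, so testing with $\delta_x-\delta_y$ is not directly available, and the needed two-sided variations require $m_0$ to be bounded below near $x,y$.)

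The paper circumvents this entirely by a different route: it never tries to bound gradients of test functions. Instead it shows via the optimality system \eqref{mfg} and a barrier argument on the adjoint $u$ (Propositions \ref{prop.optconditions}--\ref{prop.alphabounded}) that optimal controls are uniformly bounded, hence $U=U^R$ identically for $R\geq R_0$ (Corollary \ref{cor.Ur}). Then the viscosity property is verified \emph{directly for $U^R$ with Hamiltonian $H^R$} via dynamic programming with bounded controls (Propositions \ref{prop.u.subsolution}--\ref{prop.u.supersolution}); the nonstandard $\delta\|m\|_2^2$ term is handled through the energy identity \eqref{l2.decrease} for the Fokker--Planck flow. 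Your spatial-Lipschitz plan via transport/coupling is also not the paper's: the paper runs the \emph{same} near-optimal control from both initial conditions $m_0,m_1$ and compares costs by duality with a backward linear equation whose gradient is controlled through heat-semigroup smoothing (Proposition \ref{prop.d1Lip}); this sidesteps the construction of an admissible competitor from a transport plan, which you yourself flag as the main obstacle.
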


Our final contribution is to show that the value functions $(V^{N,K})_{K= 1,...,N}$ converge as $N \to \infty$ towards the value function $U$. %in an appropriate sense. 

\begin{theorem}[Convergence] \label{thm.conv}
      Assume \eqref{assump.main} and \eqref{assump.main1}.  Then the solutions  $(V^{N,K})_{K = 1,...,N}$ to \eqref{hjbnk} converge, as $N \to \infty$,  towards $U$, in the sense that 
    \begin{align*}
        \lim_{N \to \infty} \max_{K= 1,...,N} \sup_{(t,\bx) \in [0,T] \times \Omega^K} \Big| U(t,m_{\bx}^{N,K}) - V^{N,K}(t,\bx) \Big| = 0.
    \end{align*}
\end{theorem}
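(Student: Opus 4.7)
The plan is to combine the comparison principle (Theorem \ref{thm.comparison}) applied to the truncated equation \eqref{hjbinfR} with the half-relaxed limits technique of Barles-Perthame, adapted to the measure-valued state space $\sub$ and the boundary hierarchy \eqref{compatcond}. As a preliminary step, I would establish a uniform gradient estimate of the form $|N D_{x^i} V^{N,K}(t,\bx)| \leq C$, valid for all $N$, $K \in \{1,\ldots,N\}$, $i$, $t$, and $\bx$, by a probabilistic coupling argument that exploits the Lipschitz continuity of $L$, $F$, $G$ in \eqref{assump.main}. Together with a uniform $L^\infty$ bound, this allows one to replace $H$ by its Lipschitz truncation $H^R$ for any $R \geq R_0$ in \eqref{hjbnk} without altering $V^{N,K}$. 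By Theorem \ref{thm.value}, $U$ is simultaneously the unique viscosity solution of \eqref{hjbinfR} for such $R$, so it suffices to work at the Lipschitz level throughout.

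Define the half-relaxed limits
\[
W^+(t,m) = \limsup_{\substack{N \to \infty \\ |s-t|+\bd(m_{\bx}^{N,K},m) \to 0}} V^{N,K}(s,\bx), \qquad W^-(t,m) = \liminf_{\substack{N \to \infty \\ |s-t|+\bd(m_{\bx}^{N,K},m) \to 0}} V^{N,K}(s,\bx),
\]
where the indices range over $K \in \{1,\ldots,N\}$ and $(s,\bx) \in [0,T] \times \ov{\Omega}^K$. By the preliminary estimates, $W^\pm$ are bounded, satisfy $W^\pm(T,\cdot) = G$, and inherit $W^\pm(t,\bzero) = G(\bzero) + (T-t)F(\bzero)$ from \eqref{bottom.boundary}. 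I would then show that $W^+$ is a viscosity subsolution and $W^-$ a supersolution of \eqref{hjbinfR}. The key computation is the lift: given a smooth test function $\phi$ on $[0,T] \times \sub$ with $\frac{\delta \phi}{\delta m}(t,m,x) = 0$ for $x \in \partial \Omega$, set $\phi^{N,K}(t,\bx) := \phi(t, m_{\bx}^{N,K})$; then
\[
D_{x^i} \phi^{N,K} = \tfrac{1}{N} D_x \tfrac{\delta \phi}{\delta m}(t, m_{\bx}^{N,K}, x^i), \quad \Delta_{x^i} \phi^{N,K} = \tfrac{1}{N} \Delta_x \tfrac{\delta \phi}{\delta m}(t, m_{\bx}^{N,K}, x^i) + O(N^{-2}),
\]
and $\phi^{N,K}(t,\bx) = \phi^{N,K-1}(t, (x^1,\ldots,x^{i-1},x^{i+1},\ldots,x^K))$ whenever $x^i \in \partial\Omega$, matching the structure of \eqref{hjbnk} and \eqref{compatcond}. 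A standard penalization at a strict local maximum of $W^+ - \phi$ produces a sequence of maxima of $V^{N,K} - \phi^{N,K}$; evaluating \eqref{hjbnk} there, using $\frac{1}{N}\sum_i \psi(x^i) \to \int_\Omega \psi\, dm$ for continuous $\psi$, and passing to the limit $N \to \infty$ yields the subsolution inequality of \eqref{hjbinfR}. The supersolution property for $W^-$ is symmetric.

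Once $W^+$ and $W^-$ are known to be sub/supersolutions of \eqref{hjbinfR}, Theorem \ref{thm.comparison} gives $W^+ \leq W^-$ on $[0,T] \times \sub$; the reverse inequality is immediate from the definitions, so $W^+ = W^- =: W$, and Theorem \ref{thm.value} identifies $W$ with $U$. A standard compactness argument in $(\sub, \bd)$ then upgrades this identification to the uniform convergence in the statement. The main obstacle is the boundary coupling between the levels of the hierarchy: since the $V^{N,K}$ live on different state spaces $\ov{\Omega}^K$ linked only through \eqref{compatcond}, the penalization in the previous step may produce a maximizer with some $x^i$ on $\partial \Omega$, which forces a descent to level $K-1$. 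The zero-trace condition $\frac{\delta \phi}{\delta m}|_{\partial \Omega} = 0$ imposed on admissible test functions is exactly what makes $\phi^{N,K}$ obey the same compatibility condition as $V^{N,K}$, so that this descent can be implemented by induction on $K$. Executing this reduction cleanly — and securing the uniform gradient estimate in the presence of the absorbing boundary — is the technical crux of the proof.
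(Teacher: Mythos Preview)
Your overall strategy---compactness of the $V^{N,K}$ via uniform estimates, verification that limits solve \eqref{hjbinfR}, then identification with $U$ by comparison---is exactly the paper's. The Barles--Perthame half-relaxed limits you propose are a reasonable substitute for the paper's Arzel\`a--Ascoli extraction (Lemma~\ref{lem.compactness}); since the paper already establishes equicontinuity (Theorem~\ref{thm.vnk.lip}), the two are essentially equivalent here.

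There is, however, a genuine gap in your verification that $W^{\pm}$ are viscosity sub/supersolutions. You test only against smooth $\phi$ with $\frac{\delta\phi}{\delta m}\big|_{\partial\Omega}=0$, but Definition~\ref{defn.viscositysoln} requires test functions of the form $\Phi(t,m)+\delta\|m\|_2^2$, and the comparison principle (Theorem~\ref{thm.comparison}) is proved \emph{only} for that definition: the $L^2$ penalty is what forces the touching point into $H_0^1$ and makes the doubling argument close. So showing the subsolution inequality for smooth $\phi$ alone does not let you invoke Theorem~\ref{thm.comparison}. Worse, the penalty $\|m\|_2^2$ is infinite on empirical measures, so your lift $\phi^{N,K}(t,\bx)=\phi(t,m_{\bx}^{N,K})$ cannot be applied to the full test function. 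The paper handles this (Proposition~\ref{prop.subsol}) by mollifying to $\|m*\rho_\kappa\|_2^2$, sending $N\to\infty$ with $\kappa$ fixed, and only then letting $\kappa\to 0$; the interchange of limits requires uniform $H_0^1$ bounds on $\rho_\kappa*m_\kappa$ coming from the viscosity inequality itself. This regularization is the technical heart of the convergence proof and is missing from your outline.

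A secondary remark: the mollified penalty also disposes of your boundary worry without induction on $K$. If a maximizer of $V^{N,K}-\Phi^{N,K}-\delta\|m_{\bx}^{N,K}*\rho_\kappa\|_2^2$ had $x^i\in\partial\Omega$, dropping that particle leaves $V^{N,K}$ and $\Phi^{N,K}$ unchanged (by \eqref{compatcond} and the vanishing of $\frac{\delta\Phi}{\delta m}$ on $\partial\Omega$) but \emph{strictly} decreases the penalty, contradicting optimality. This is cleaner than the inductive descent you sketch.
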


\subsection{Ideas of the proof.}
We discuss next the main arguments used to establish the comparison principle for \eqref{hjbinf} (Theorem \ref{thm.comparison}) and the convergence result (Theorem \ref{thm.conv}), with an emphasis on the challenges created by the boundary.
%\newline \newline 
%\noindent
%\textit{The doubling of variables argument.} 
Unsurprisingly, our proof of Theorem \ref{thm.comparison} involves doubling variables. The challenge is to find penalizations which are compatible with the boundary. 
 
On the one hand, because we work with sub-probability measures, using the $2$-Wasserstein distance as a penalization (as in \cite{bertucci2023} and \cite{daudinseeger}) is not possible. On the other hand, working with smoother metrics, like those obtained by embedding $\sub$ into a sufficiently negative Sobolev space (as in \cite{SonerYanSICON, SonerYanAMO, BayraktarEkrenZhangCPDE, DJS2025}) presents more subtle challenges related to the boundary. %Very roughly speaking, to mimic the arguments in these papers when there is a boundary, we need to to justify a certain integration by parts, which would require us to work with a Sobolev space like $H^{-s} = (H_0^s)^*$ for $s > d/2 + 2$. But as in \cite{BayraktarEkrenZhangCPDE, DJS2025}, this means that in order to obtain a comparison principle, we would need to be able to produce solutions which are Lipschitz with respect to $H^{-s}$, at least if the data is smooth enough. This last point does not seem feasible, given that even if the data is very smooth, we can only expect $\frac{\delta U}{\delta m}(t,m,\cdot)$ (not its spatial derivatives) to vanish on $\partial \Omega$.

In the end, we work with a much rougher metric.  Given a sub-solution $V^-$ and a super-solution $V^+$, we study an optimization problem of the form
\begin{align} \label{doubling.intro}
    \sup_{t,s \in [0,T], \, m,n \in \sub \cap L^2(\Omega)} \Big\{ V^-(t,m) - V^+(s,n) - \frac{1}{2\eps} \|m - n\|_{H^{-1}}^2 - \delta \|m\|_2^2 - \delta \|n\|_2^2 - \lambda(T-t) \Big\}, 
\end{align}
where $H^{-1} = (H_0^1(\Omega))^*$ and $\|m\|_2$ indicates the $L^2$-norm of the density of $m$ (see Section \ref{sec.prelim} for more details). The additional penalization by the squared $L^2$ norms of $m$ and $n$ brings important compactness properties, and, as discussed in Remark \ref{rmk.viscosityintuition}, also plays a role in the enforcement of the boundary condition \eqref{boundary.cond}. We follow a standard strategy, sending first $\eps$ and then $\delta$ to zero, in order to conclude that indeed $V^- \leq V^+$. For this strategy to succeed, a careful analysis of various error terms is needed, and here the choice of the norm $\| \cdot \|_{H^{-1}}$ is key.

 Formally, we have
\begin{align*}
    \frac{\delta}{\delta m} \Big[m \mapsto \frac{1}{2\eps} \|m - n_0\|_{H^{-1}}^2 \Big] = f, 
\end{align*}
where $f$ is the unique solution in $H_0^1(\Omega)$ of the PDE 
\begin{align*}
    f -  \Delta f = \frac{m - n_0}{\eps} \ \text{in} \ \Omega  \quad f=0 \ \text{on} \ {\partial\Omega}.
\end{align*}
In particular, the fact that the linear derivative vanishes on the boundary allows to justify an integration by parts in the analysis of the maximum point of \eqref{doubling.intro}. Similar ideas can be found in \cite{BertucciLionsSoug} and the lectures \cite{Lionsvideo}. 

 A key step in proving the convergence result (Theorem 1.3) is
to establish an equicontinuity estimate for the functions $(V^{N,K})_{K=1,...,N}$. Indeed, we will
prove in Theorem 4.1 that there is a constant $C$ which is independent of $N$ such that, for each $N \in \N$, $K,M \in \{1,...,N\}$, $t,s \in [0,T]$, $\bx \in \Omega^K$, $\by \in \Omega^M$, 
\begin{align} \label{vnk.lip.intro}
    |V^{N,K}(t,\bx) - V^{N,M}(s,\by)| \leq C \Big(|t-s|^{-1/2} + \bd(m_{\bx}^{N,K}, m_{\by}^{N,M}) \Big).
\end{align}
%holds for each $N \in \N$, $K,M \in \{1,...,N\}$, $t,s \in [0,T]$, $\bx \in \Omega^K$, $\by \in \Omega^M$. 
In the case without boundary, similar uniform in $N$ Lipschitz bounds can be obtained as a fairly straightforward consequence of the maximum principle, see, for example,  \cite[Lemma 3.1]{cdjs2023}. The argument here, carried out in subsection \ref{subsec.vnkLip}, is much more involved, and involves building barrier functions for $V^{N,K}$ which scale appropriately in $N$. In addition, after these barrier functions are used to obtain estimates on the boundary, some care is needed to propagate this bound to the interior in a way which again does not depend on $N$. This is done in Proposition \ref{prop.d1Lip} via a doubling of variables argument which treats all $N$ of the functions $(V^{N,K})_{K = 1,...,N}$ simultaneously.
%\newline \newline \noindent 
%\textit{Regularization procedures.}
 
Notice that in the doubling of variables argument outlined above, we need to work with test functions of the form 
\begin{align*}
    m \mapsto \Phi(t,m) + \delta \|m\|_2^2, %\quad \Phi(t,m) \coloneqq \frac{1}{2\eps} \|m - n_0\|_{-1}^2, 
\end{align*}
where $\Phi(t,m) = \frac{1}{2\eps} \|m - n_0\|_{-1}^2 + \lambda(T-t)$ for fixed $n_0 \in \sub \cap L^2(\Omega)$. This $\Phi$ is smooth as a function on $H^{-1}$, but to apply Definition \ref{defn.viscositysoln}, we need $\Phi$ to be a smooth test function in the sense of Definition \ref{def.testfunction}, and this is a much stronger condition. Proposition \ref{prop.equiv} explains how to circumvent this issue via a smoothing procedure, which is made more delicate by the presence of the boundary.

A similar regularization procedure is also required in the proof of the convergence result. In particular, the estimate \eqref{vnk.lip.intro} allows one to produce subsequential ``limit points" of the $V^{N,K}$ (see Definition \ref{def.limitpoint}). Together with the comparison principle from Theorem \ref{thm.comparison}, this compactness result reduces the convergence problem to showing that all such limit points are in fact viscosity solutions of \eqref{hjbinf} or, more precisely, of the truncated version \eqref{hjbinfR}. The natural route to obtain such a result involves projecting the relevant test functions, which take the form $\Psi(t,m) = \Phi(t,m) + \delta \|m\|_2^2$ down to finite dimensions, and looking at an optimization problem like 
\begin{align*}
    \max_{K = 0,...,N} \sup_{(t,\bx) \in [0,T] \times (\ov{\Omega})^K} \Big\{ V^{N,K}(t,\bx) - \Psi(t,m_{\bx}^{N,K}) \Big\}.
\end{align*}
But this does not make sense because we cannot compute the $L^2$ norm of an empirical measure. This necessitates another smoothing procedure, in which we regularize the $L^2$ norm, then send $N \to \infty$, and, finally, let the regularization parameter tend to zero. This is the subject of Proposition \ref{prop.subsol}.

\subsection{Related literature.}
Our results lie at the intersection of several active streams of literature; in particular, the study of mean field models with absorption, the theory of viscosity solutions for Hamilton-Jacobi equations on spaces of measures, and the convergence problem in MFC.

%\textit{Mean field models with absorption.} 
A number of papers in recent years have studied interacting particle systems with absorbing boundary conditions in the absence of any control. For example, several authors have used particle systems on $(0,\infty)$ with absorption at $0$ to model systemic risk in finance,  absorption representing the default of a financial institution; we refer to \cite{NadtochiyShkolnikov2019, NadtochiyShkolnikov2020, HamblyLedgerSojmark2019, LedgerSojmark2021} and the references therein for more details on such particle systems with and without common noise. These models typically feature singular interactions which we do not consider here, but the idea that agents or particles must leave the system when they reach some boundary is similar. The recent preprint \cite{GuoTomasevic} discusses a more standard weakly interacting particle system with an absorbing boundary, and \cite{HamblyLedger2017} treats a similar particle system with a common noise.

There have also been some recent efforts to study mean field games with absorption; see, for example, \cite{CampiFischer2018, CampiGhioLivieri2021, BurzoniCampi23} for probabilistic perspectives and \cite{GraberSircar2023} for the  study of the master equation for a particular model coming from economics. Our model can be viewed as a ``cooperative" analogue of the MFG model introduced in \cite{CampiGhioLivieri2021}. However, we emphasize that the existing works on MFGs with absorption focus on understanding the limiting model, and do not treat the convergence problem.
%\newline \newline 
%\noindent

%\textit{Viscosity solutions for Hamilton-Jacobi equations on spaces of measures.} 
Hamilton-Jacobi equations on spaces of measures have received a huge amount of attention in the past few years, see e.g. \cite{BayraktarEkrenZhangCPDE, BayraktarEkrenHeZhang, BCEQTZ, touzizhangzhou, bertucci2023, confortihj, CKT23b, CKTT24, daudinseeger, DJS2025}. Typically, the main goal of these works is to obtain a comparison principle for viscosity (sub/super-)solutions of such equations. In the setting of mean field control; for instance, such comparison results allow one to characterize the value function of a MFC problem as the unique viscosity solution of a corresponding Hamilton-Jacobi equation. Compared to the equations studied in previous works, the novelties of \eqref{hjbinf} are (i) the fact that it is set on a space of sub-probability measures, rather than a space of probability measures, and (ii) the non-standard boundary conditions \eqref{boundary.cond}.
%\newline \newline 
%\noindent 
%\textit{The convergence problem in Mean Field Control.} 

In the setting of standard mean field control, the hierarchy \eqref{hjbnk} is replaced by a single Hamilton-Jacobi-Bellman equation. The solution to this equation is the value function $V^N : [0,T] \times (\R^d)^N \to \R$ of an $N$-particle control problem. Meanwhile, the limiting value function $U : [0,T] \times \cP_2(\R^d) \to \R$ solves a Hamilton-Jacobi equation similar to \eqref{hjbinf}. In this setting, it is expected that $V^N$ converges to $U$, in the sense that, for $N$ large, $V^N(t,\bx) \approx U(t,m_{\bx}^N).$

For standard MFC problems, the convergence of $V^N$ to $U$ is very well understood. The first results of this type were obtained via probabilistic compactness arguments in \cite{budhiraja2012, Lacker2017}, and these techniques have since been extended in various directions in \cite{DjetePossamaiTan, djete2022extended}. More in the spirit of the present article, \cite{GangboMayorgaSwiech, mayorgaswiech} obtained the convergence of $V^N$ to $U$ for models with a purely common noise, by passing to the limit directly at the level of the PDEs satisfied by $V^N$ and $U$. The key point is to obtain appropriate equicontinuity estimates on the sequence $(V^N)_{N \in \N}$, and show that all of its ``limit points" solve the limiting Hamilton-Jacobi equation satisfied by $U$. Under appropriate technical conditions, it is also possible to quantify the convergence of $V^N$ to $U$ by using viscosity solutions techniques; see \cite{bayraktarcecchinchakrabory, cdjs2023, cjms2023, BayraktarEkrenZhangQuant, ddj2023, CDJM}.

\subsection*{Organization of the paper.} In Section \ref{sec.prelim} we introduce most of the  notation used in the paper and  present  some  preliminary facts. In Section \ref{sec.visc}  we give the definition for the viscosity solution to \eqref{hjbinf} and prove a technical fact that is then used in 
%using two different classes of test functions and prove their equivalence. 
%the infinite dimensional Hamilton-Jacobi equation and prove their equivalence. . 
Section \ref{sec.comparison} to prove Theorem \ref{thm.comparison}, the comparison principle for \eqref{hjbinf}. Section \ref{sec.vnk} is devoted to the properties of the functions $V^{N,K}$ introduced above. The main results there are an equicontinuity estimate (Theorem \ref{thm.vnk.lip}) and a verification that every ``limit" point of the functions $(V^{N,K})_{K= 1,...,N}$ is a viscosity solution of \eqref{hjbinf} (Proposition \ref{prop.subsol}). Finally, in Section \ref{sec.u}, we verify that the value function $U$ is Lipschitz continuous and satisfies \eqref{hjbinf}, which allows us to complete the proof of the convergence result (Theorem \ref{thm.conv}).

\section{Notation, preliminaries, and assumptions.} \label{sec.prelim}
\subsection{Spaces of functions and measures on $\Omega$.}
We work throughout the paper with a bounded domain $\Omega \subset \R^d$  {with $C^3$ boundary} . We denote by $\ov{\Omega}$ the closure of $\Omega$, and by $\partial \Omega = \ov{\Omega} \setminus \Omega$ the boundary of $\Omega$.  We write $d_{\partial \Omega} : \R^d \to \R$ for the distance to the boundary $\partial \Omega$. For each $\eps > 0$, we define 
\begin{align*}
    \cN_{\eps} = \{x \in \Omega : d_{\partial \Omega}(x) < \eps\}. 
\end{align*}
Thus $\cN_{\eps}$ is an open ``collar" around $\partial \Omega$, contained in $\Omega$. It will also be useful at times to work with the signed distance function $d_{\partial \Omega}^{\text{s}} : \R^d \to \R$ given by 
\begin{align*}
    d_{\partial \Omega}^{\text{s}}(x) = \begin{cases}
        d_{\partial \Omega}(x) & x \in \ov {\Omega}, 
        \\
        -d_{\partial \Omega}(x) & x \in (\ov \Omega)^c.
    \end{cases}
\end{align*}
We recall that because $\partial \Omega$ is $C^3$, $d^{\text{s}}_{\partial \Omega}$ is $C^3$ in a neighborhood of $\partial \Omega$, and in particular for $\eps$ small enough $d_{\Omega}$ is $C^3$ in $\cN_{\eps}$, with bounded derivatives up to order $3$. We also recall that if $d_{\partial \Omega}$ is differentiable at a point $x \in \Omega$, then there is a unique point $y = \pi_{\partial \Omega}(x) \in \partial \Omega$ such that $d_{\partial \Omega}(x) = |x-y|$, and we have
\begin{align*}
    D d_{\partial \Omega}(x) = \frac{x - \pi_{\partial \Omega}(x)}{|x - \pi_{\partial \Omega}(x)|}. 
\end{align*}
In particular, $|D d_{\partial \Omega}(x)| = 1$ at any point $x$ of differentiability.  
%\newline \newline 
%\textit{The space $\sub$.}

We write  $\cP(\ov{\Omega})$ for the set of probability measures on $\ov{\Omega}$, and  recall that   $\sub$ stands for  the set of sub-probability measures on $\Omega$, that is,  the set of non-negative Borel measures $m$ on $\Omega$ with $0 \leq m(\Omega) \leq 1$. The zero measure on $\Omega$ is denoted by $\bzero$. We introduce the metric $\bd$ on $\sub$, which is inherited from duality with the set of Lipschitz function which vanish on $\partial \Omega$, that is,  we define a metric $\bd$ on $\sub$ by 
\begin{align*}
    \bd(m,n) = \sup_{\phi \in E} \int_{\Omega} \phi \, d(m - n), \ \text{where} \  E \coloneqq \big\{ \phi : \ov{\Omega} \to \R  :\phi \text{ is $1$-Lipschitz, } \phi = 0 \text{ on $\partial \Omega$} \big\}. 
\end{align*}
We note that $(\sub, \bd)$ is a compact metric space, which is a straightforward consequence of the compactness of $\cP(\ov \Omega)$ with respect to the standard 1-Wasserstein distance $\bd_1$. 
%For this to be true, we show that,  if  $(m_n)_{n \in \N}$ is a sequence  in $\sub$, then, there  exists a a subsequence $({m}_{n_k})_{k \in \N}$ and some $m\in \sub$, such that, as $k\to \infty$, $\bd({m}_{n_k}, m)\to 0$. Indeed, we fix some fixed  $x_0 \in \partial \Omega$ and we define $\widetilde{m}_n = m_n + \big(1 - m_n(\Omega)\big) \delta_{x_0} \in \cP(\ov{\Omega})$. It is standard that $\cP(\ov{\Omega})$ is compact with respect to the 1-Wasserstein metric $\bd_1$, so we can find $\widetilde{m} \in \cP(\ov{\Omega})$ and a subsequence $(\widetilde{m}_{n_k})_{k \in \N}$ such that
%\begin{align*}
%    \bd_1(\widetilde{m}, \widetilde{m}_{n_k}) = \sup_{\phi \text{ 1-Lipschitz}} \phi \, d(\widetilde{m} - \widetilde{m}_{n_k}) \xrightarrow{k \to \infty} 0. 
%\end{align*}
%But setting $m = \widetilde{m}|_{\Omega} \in \sub(\Omega)$, we see that, for any $\phi \in E$, $\int_{\Omega} \phi  \, d(m - m_n) = \int_{\ov{\Omega}} \phi \, d(\tilde{m} - \tilde{m}_n)$, so that 
%\begin{align*}
%    \bd(m,m_{n_k}) = \sup_{\phi \in E} \int_{\Omega} \phi \, d(m - m_{n_k}) = \sup_{\phi \in E} \int_{\ov{\Omega}} \phi \, d(\tilde{m} - \tilde{m}_n) \leq \bd_1(m,m_{n_k}) \xrightarrow{k \to \infty} 0.
%\end{align*}
%This shows that $(\sub, \bd)$ is a compact metric space. 
We can also view $\sub$ as a (non-compact) subset of the space $\sub(\ov{\Omega})$ of sub-probability measures on $\ov{\Omega}$, endowed with the topology of weak-$*$ convergence. We say that a sequence $m_n \in \sub$ converges weak-$*$ if it converges in weak-$*$ as a sequence in $\sub(\ov{\Omega})$, that is,  if for each continuous map $\phi : \ov{\Omega} \to \R$, we have $ \int \phi dm_n \to  \int \phi dm$ as $n\to \infty.$
%\begin{align*}
%    \int \phi dm_n \xrightarrow{n \to \infty} \int \phi dm.
%\end{align*}
%%\newline \newline 
%%\noindent 
%%\textit{Relevant Hilbert spaces.} 
 
We write  $L^2 = L^2(\Omega)$ for  the space of square integrable functions on $\Omega$ with inner-product 
%\begin{align*}
   $ \langle f,g \rangle_2 = \int_{\Omega} f(x) g(x) dx$ and norm $\|  \|_2$. We denote by $H_0^1 = H_0^1(\Omega)$ the Hilbert space of functions $\phi \in L^2(\Omega)$ with distributional derivative $D\phi \in L^2(\Omega)$ and $\phi|_{\partial \Omega} = 0$ in the sense of trace. The inner product in $H_0^1$ is 
\begin{align*}
    \langle \phi, \psi \rangle_{H_0^1} = \int_{\Omega} \phi(x) \psi(x) dx + \int_{\Omega} D \phi(x) \cdot D\psi(x) dx, 
\end{align*}
and the corresponding norm is denoted by $\| \cdot \|_{H_0^1}$. The dual of  $H_0^1$, that is,  the set of bounded linear functionals $p : H_0^1 \to \R$, is the space $H^{-1} = H^{-1}(\Omega)$ which 
% the dual of $H_0^1$, i.e. the space of bounded linear functionals $p : H_0^1 \to \R$. 
%The space $H^{-1}$ 
inherits from the duality with $H_0^1$ the  inner product  $\langle \cdot, \cdot \rangle_{H^{-1}}$  and the norm $\| \cdot \|_{-1} = \| \cdot \|_{H^{-1}}$.
%from the duality with $H_0^1$, which denote by $\langle \cdot, \cdot \rangle_{H^{-1}}$ and $\| \cdot \|_{H^{-1}}$, respectively. 
Given $q \in H^{-1}$ and $\phi \in H_0^1$, we sometimes write $\langle q, \phi \rangle_{-1,1}$ or $\langle \phi, q \rangle_{1,-1}$ in place of $q(\phi)$.

We will often work with subsets of $\sub$ like $\sub \cap L^2$, that is,  the set of measures $m \in \sub$ which have a square-integrable density with respect to the Lebesgue measure $dx$. When $m$ is a measure which admits a density with respect to the Lebesgue measure on $\Omega$, we abuse notation, and use $m$ also to refer to this density. Similarly, we can view $\sub \cap H^{-1}$ as the set of measures $m \in \sub$ with the property that the map $ \phi \mapsto \int_{\Omega} \phi\, dm$
%\begin{align*}
%    \phi \mapsto \int_{\Omega} \phi\, dm
%\end{align*}
defines a bounded linear functional on $H_0^{1}$, and in this case we use $m$ to refer both to the measure and the linear functional it induces.
%\newline \newline 
%\noindent 
\subsection{Calculus on $\sub$ and on $H^{-1}$.}
We say that $\Phi : \sub \to \R$ has a (continuous) linear functional derivative if there is a continuous function $ \frac{\delta \Phi}{\delta m} : \sub \times \Omega \to \R$
%\begin{align*}
%    \frac{\delta \Phi}{\delta m} : \sub \times \Omega \to \R
%\end{align*}
with the property that, for any $m,m' \in \sub$, 
\begin{align*}
    \Phi(m') - \Phi(m) = \int_0^1 \int_{\Omega} \frac{\delta \Phi}{\delta m}\big( tm' + (1-t)m, x \big) (m'-m)(dx).
\end{align*}
%for each $m,m' \in \sub$. 
Higher derivatives are defined in an analogous way.  For example,  the second derivative $\frac{\delta^2 \Phi}{\delta m}$, if it exists,  satisfies 
\begin{align*}
    \frac{\delta^2 \Phi}{\delta m^2}(m,x,y) = \frac{\delta}{\delta m}\Big[\frac{\delta \Phi}{\delta m}(m,x) \Big](y).
\end{align*}
We note that, if $\frac{\delta \Phi}{\delta m}(m,\cdot)$ is uniformly continuous, then it extends continuously to all of $\ov{\Omega}$, and we will often take this extension without comment. A similar comment holds for $\frac{\delta^2 \Phi}{\delta m^2}$.

Given $\Phi : H^{-1} \to \R$, we write  $D_{H^{-1}} \Phi$ for the Frechet derivative of $\Phi$, if it exists, that is,  for $q \in H^{-1}$, $D_{H^{-1}} \Phi(q)(\cdot) \in H_0^1$ satisfies 
\begin{align*}
    \Phi(q + r) = \Phi(q) + \langle D_{H^{-1}} \Phi(q), r \rangle_{-1,1} + o(\|r\|_{H^{-1}}).
\end{align*}
Since $D_{H^{-1}}\Phi(q)(\cdot) \in H_0^1(\Omega)$, it extends (by zero) to an element of $H_0^1(\R^d)$, and we will often make use of this extension without comment. We also find it convenient at times to use the notation $D_{H^{-1}} \Phi(q,x) = D_{H^{-1}} \Phi(q)(x)$, and we  write 
\begin{align*}
    D_x D_{H^{-1}} \Phi(q,x) = D_x \big[ D_{H^{-1}} \Phi(q)(\cdot)](x)
\end{align*}
for the gradient in $x$ of the Frechet derivative $D_{H^{-1}} \Phi$. Finally, we denote by $D^2_{H^{-1}} \Phi$ the second derivative of $\Phi : H^{-1} \to \R$, if it exists, viewed as a bilinear map $D_{H^{-1}}^2 \Phi : H^{-1} \times H^{-1} \to \R$.
% \newline \newline 
%\noindent 
%\textit{Functions on $\Omega^K$ or $\ov{\Omega}^K$:} 

We write $\bx = (x^1,...,x^K)$ for the general element of $\Omega^K$ or $\ov{\Omega}^K$. If it is necessary, we can further expand the coordinates of $\bx$ as $x^i = (x^i_1,...,x^i_d) \in \Omega \subset \R^d$. If $V : \Omega^K \to \R$ (or $\ov{\Omega}^K \to \R$) is differentiable, we write $D_{x^i} V = (D_{x^i_1}V,...,D_{x^i_d} V) \in \R^d$ for the gradient of $V$ in the direction $x^i$. Similarly, if $V$ is twice differentiable, we write $D_{x^ix^j} V$ for the $d \times d$ matrix $(D_{x^i_r x^j_q} V)_{r,q = 1,...,d}$. Finally, $\Delta_{x^i} V = \tr\big(D_{x^ix^i} V\big)$ is  the Laplacian in the variable $i$. 

\subsection{Assumptions.} \label{subsc.assump}

Throughout the paper we assume  that  the Lagrangian $L : \Omega \times \R^d \to \R$, and the cost functions $F,G : \sub \to \R$  are such that 
\begin{equation} \label{assump.main}
\begin{cases} \text{for each $R > 0$, there is a constant $C_R$ such that for each $x,x' \in \Omega$, $a,a' \in B_R$}\\[1.2mm]  
\hspace{.4cm} |L(x,a) - L(x',a')| \leq C_R \big(|x - x'| + |a - a'|\big),
\\[1.2mm] \text{ the map } a \mapsto L(x,a) \text{ is convex for each fixed } x \in \Omega, \text{ and }
\\[1.2mm]
\text{ $F$ and $G$ are $C^1$ and Lipschitz in $\sub$.}
\end{cases}
\end{equation}  
Here $B_R$ denotes the ball of radius $R$ centered at the origin in $\R^d$. In addition,  the Hamiltonian $H : \Omega \times \R^d \to \R$, which is  given  by 
\begin{align*}
   H(x,p) = \sup_{a \in \R^d} \Big\{ - a \cdot p - L(x,a) \Big\},
\end{align*}
is assumed to satisfy
\begin{equation} \label{assump.main1}
\begin{cases} \text{$H \in C^2(\Omega \times \R^d)$, and there is a constant $C$ such that, for each $x \in \Omega$, $p \in \R^d$,}\\[1.2mm]
\hspace{.4cm} |H(x,p)| \leq C(1 + |p|^2), \text{ and } 
\\[1.2mm]
\hspace{.4cm} |D_pH(x,p)| + |D_x H(x,p)| \leq C(1 + |p|).
 \end{cases}
 \end{equation}
 
 We emphasize that these assumptions are made for all the main results of the paper, a fact that will not be repeated  in each statement. All the properties in $\sub$ are assumed to be with respect to the metric $\bd$ which is introduced above.

%Throughout the paper we will be maximizing or minimizing functional. We will often refer to ``direct method'' the setting where 
% any maximizing/minimizing sequence is compact with respect to the weak topology in $L^2$, and that the functional  is (semi-)continuous with respect to this topology. 

\section{Viscosity solutions.}\label{sec.visc}

The first main contribution of the paper is the development of  a well-posedness  theory for  viscosity solutions to the limiting Hamilton-Jacobi equation \eqref{hjbinf}. For this, we need to introduce a notion of smooth test function. We refer to the previous section   for the notion of derivatives appearing in the following definition.

\begin{definition} \label{def.testfunction}
    A map  $\Phi : [0,T] \times \sub \to \R$ is a \textit{smooth test function} if \\
    %\newline
%    \begin{enumerate}
%    \item 
(i)~the derivatives
        \begin{align*}
            \partial_t \Phi : [0,T] \times \sub \to \R, \quad \frac{\delta \Phi}{\delta m} : [0,T] \times \sub \times \Omega \to \R, \ \text{and} \  \frac{\delta^2 \Phi}{\delta m^2} : [0,T] \times \sub \times \Omega^2 \to \R, 
        \end{align*}
        exist and are continuous (with respect to $\bd$), and furthermore the maps
        \begin{align*}
        [0,T] \times \sub \ni (t,m) \mapsto \frac{\delta \Phi}{\delta m}(t,m,\cdot) \in C^2(\ov{\Omega}), \quad  [0,T] \times \sub \ni (t,m) \mapsto \frac{\delta^2 \Phi}{\delta m^2}(t,m,\cdot, \cdot) \in C^1(\ov{\Omega} \times \ov{\Omega})
        \end{align*}
    are continuous and bounded \footnote{To clarify, part of the assumption here is that $\frac{\delta \Phi}{\delta m}(t,m,\cdot)$ extends continuously to all of $\ov \Omega$, and likewise for $\frac{\delta^2 \Phi}{\delta m}(t,m,\cdot, \cdot)$}, and
    \newline 
(ii)~ 
   % \end{enumerate}
    the first derivative $\frac{\delta \Phi}{\delta m}(t,m,\cdot)$ vanishes on $\partial \Omega$, i.e.
        \begin{align*}
            \frac{\delta \Phi}{\delta m}(t,m,x) = 0 \text{ for } x \in \partial \Omega.
        \end{align*}

\end{definition}

\begin{remark} \label{rmk.hminuss}
    It is useful to note that, for fixed $m_0 \in \sub$ and $s > d/2 + 2$, the function 
    \begin{align*}
        \Phi(t,m) = \|m - m_0\|_{-s}^2
    \end{align*}
    is a smooth test function, where $\|\cdot\|_{-s}$ is the $H^{-s}$ norm, obtained from duality with the standard Sobolev space $H_0^s$. This can be checked from the explicit formula $\frac{\delta \Phi}{\delta m} = 2(m - m_0)^*$, where $H^{-s} \ni q \mapsto q^* \in H_0^s$ is the duality map.
\end{remark}

We state next the definition of the viscosity solution to \eqref{hjbinf}.

\begin{definition} \label{defn.viscositysoln}
A  continuous  (with respect to $\bd$) function $V : [0,T] \times \sub \to \R$ is a (viscosity) subsolution of \eqref{hjbinf} if $V(T,m) \leq G(m)$ for each $m \in \sub$, and there exists a constant $C > 0$ such that the following holds: for any smooth test function $\Phi$, $\delta > 0$, and $(t_0,m_0) \in [0,T) \times (\sub \cap L^2(\Omega))$ such that
    \begin{align} \label{touchingfromabove}
       V(t_0,m_0) - \Phi(t_0,m_0) - \delta \|m_0\|_{2}^2 = \sup_{(t,m) \in [0,T] \times (\sub \cap L^2(\Omega))} \Big\{V(t,m) - \Phi(t,m) - \delta \|m\|_{2}^2 \Big\}, 
    \end{align}
    we have $m_0 \in H_0^1$, and 
    \begin{equation} \label{subsol.bound}
    \begin{split} %\label{subsol.bound}
      &  - \partial_t \Phi(t_0,m_0) + \delta \int_{\Omega} |Dm_0(x)|^2 dx - \int_{\Omega} \Delta_x \frac{\delta \Phi}{\delta m}(t_0,m_0,x) m_0(x) dx\\ %\nonumber 
       &\qquad  + \int_{\Omega} H\Big( x, D_x \frac{\delta \Phi}{\delta m}(t_0,m_0,x)\Big) m_0(x) dx \leq F(m_0) + C \delta \|m_0\|_{2}^2.
    \end{split}
    \end{equation}
A  continuous  (with respect to $\bd$) function $V : [0,T] \times \sub \to \R$ is a (viscosity) supersolution of \eqref{hjbinf} if $V(T,m) \geq G(m)$ for each $m \in \sub$, and there exists a constant $C > 0$ such that the following holds: for each smooth test function $\Phi$ and $\delta > 0$, any pair $(t_0,m_0) \in [0,T) \times (\sub \cap L^2(\Omega))$ such that
    \begin{align} \label{touchingfrombelow}
       V(t_0,m_0) - \Phi(t_0,m_0) + \delta \|m_0\|_{2}^2 = \inf_{(t,m) \in (0,T) \times (\sub \cap L^2(\Omega))} \Big\{V(t,m) - \Phi(t,m) + \delta \|m\|_{2}^2 \Big\}, 
    \end{align}
    we have $m_0 \in H_0^1$, and 
    \begin{equation} \label{supersol.boud}
     \begin{split} 
      &  - \partial_t \Phi(t_0,m_0) - \delta \int_{\Omega} |Dm_0(x)|^2 dx - \int_{\Omega} \Delta_x \frac{\delta \Phi}{\delta m}(t_0,m_0,x) m_0(x) dx
      \\
       &\qquad  + \int_{\Omega} H\Big( x, D_x \frac{\delta \Phi}{\delta m}(t_0,m_0,x)\Big) m_0(x) dx \geq F(m_0) - C \delta \|m_0\|_{2}^2.
    \end{split}
    \end{equation}
A viscosity solution is a function which is both a viscosity subsolution and a viscosity supersolution.
\end{definition}

\begin{remark}
   In the setting of ``standard" mean field control (so that the relevant HJB equation is set on e.g. the Wasserstein space $\cP_2(\R^d)$ or $\cP(\T^d)$), various definitions of viscosity solution have been proposed. For example, in \cite{SonerYanAMO, SonerYanSICON, BayraktarEkrenZhangCPDE, DJS2025} viscosity solutions are defined in terms of smooth test functions; this is possible because in their doubling of variables arguments, the authors penalize with a``smooth metric" which is inherited from a sufficiently negative Sobolev space. There are serious challenges in adapting this strategy to the present setting, related to the incompatibility of these negative Sobolev metrics with the boundary conditions. 
    
    On the other hand, another strategy is to use some sort of ``singular penalization" to force the optimizers in the doubling of variables argument to have some regularity. The hope is that this additional regularity helps to make sense of the differential of a more natural squared distance function in a doubling of variables argument. For example, this strategy is used in \cite{BertucciLionsSoug} and \cite{daudinseeger}, where the distance function is the 2-Wasserstein distance, and the ``singular perturbation" is the entropy functional. This leads to definitions of viscosity solutions similar to \eqref{defn.viscositysoln}. 

    In this paper, we instead use the $H^{-1}$ distance in our doubling of variables argument, because it is compatible with boundary conditions in the equation. The ``singular perturbation" $\| \cdot \|_2^2$ provides the regularity needed to make sense of the differential of the squared $H^{-1}$ distance in the doubling of variables argument below.
\end{remark}

\begin{remark} \label{rmk.viscosityintuition}
The boundary condition \eqref{boundary.cond} is encoded through the assumption that the touching point $(t_0,m_0)$ satisfies $m_0 \in H_0^1$, and, in particular,  $m_0$ vanishes on the boundary. Indeed, if $m_0$ has a density, then, formally, we  have 
    \begin{align} \label{l2deriv}
    \frac{\delta}{\delta m}  \| \cdot \|_2^2 (m_0,x) = 2m_0(x).  % \frac{\delta}{\delta m} \Big[m \mapsto \|m\|_2^2\Big](m_0,x) = 2m_0(x).
    \end{align}
    Thus, if \eqref{touchingfromabove} holds and $V$ is smooth, then 
    \begin{align*}
        \frac{\delta V}{\delta m}(t_0,m_0,x) \leq \frac{\delta \Phi}{\delta m}(t_0,m_0,x) + 2\delta m_0(x), 
    \end{align*}
    with equality on the support of $m_0$. 
    Since $\Phi$ is a smooth test function, $\frac{\delta \Phi}{\delta m}(t_0,m_0,\cdot) =  0 \ \text{ on }  \partial \Omega,$ which means that $\frac{\delta V}{\delta m}(t_0,m_0,\cdot) \leq 2\delta m_0 = 0\ \text{ on}  \ \partial \Omega.$ The supersolution test enforces the other direction.
 
To motivate  the inequality  \eqref{subsol.bound}, we note that, because of \eqref{l2deriv} and the fact that $(t,m) \mapsto \Phi(t,m) + \delta \|m\|_2^2$ touches $V$ from above at $(t_0,m_0)$, we formally expect
    \begin{align} \label{formal.subsol}
        &- \partial_t \Phi(t_0,m_0) - \int_{\Omega} \Delta_x \Big( \frac{\delta \Phi}{\delta m}(t_0,m_0,x) + 2 \delta m_0(x) \Big) m_0(x) dx
       \nonumber  \\
        &\qquad + \int_{\Omega} H\Big(x, D_x \frac{\delta \Phi}{\delta m}(t_0,m_0,x) + 2\delta Dm_0(x) \Big) m_0(x) dx \leq F(m_0).
    \end{align}
    Since $m_0 \in H_0^1$, we can, again formally,  integrate by parts to get 
    \begin{align} \label{ibp}
        \int_{\Omega} \Delta_x m_0(x) m_0(x) dx = - \int_{\Omega} |Dm_0(x)|^2 dx.
    \end{align}
    If $H$ is Lipschitz, it follows that 
    \begin{align} \label{Hlipcomp}
        \int_{\Omega} &H\Big(x, D_x \frac{\delta \Phi}{\delta m}(t_0,m_0,x) + 2\delta Dm_0(x) \Big) m_0(x) dx 
       \nonumber  \\
        &\geq \int_{\Omega} H\Big(x, D_x \frac{\delta \Phi}{\delta m}(t_0,m_0,x)\Big)m_0(x) dx - C\delta \int_{\Omega} |Dm_0(x)| m_0(x) dx 
        \nonumber \\
        &\geq \int_{\Omega} H\Big(x, D_x \frac{\delta \Phi}{\delta m}(t_0,m_0,x)\Big)m_0(x) dx - \delta \int_{\Omega} |Dm_0(x)|^2 dx - C \delta \|m_0\|_2^2.
    \end{align}
    Combining \eqref{formal.subsol}, \eqref{ibp}, and \eqref{Hlipcomp}, we arrive at \eqref{subsol.bound}.    
\end{remark}

%\section{The comparison principle} \label{sec.comparison}

Definition \ref{defn.viscositysoln} uses  test functions which are the sum of a smooth part $\Phi$ and a singular part involving the $L^2$ norm. This is the definition which we will use directly when we verify that ``limit points" of $(V^{N,K})_{K= 1,...,N}$ solve \eqref{hjbinf}. To prove, however,  the  comparison principle for \eqref{hjbinf}, it is  more convenient to work with $\Phi \in C^{1,2}([0,T] \times H^{-1})$. This is the topic of the next proposition. 
% where $H^{-1} = (H_0^{1})^*$ is the dual of $H_0^1 = H_0^1(\Omega)$ the usual Hilbert space of functions $\phi : \Omega \to \R$ with square integrable derivatives and such that $\phi$ vanishes on the boundary of $\Omega$. 

\begin{proposition} \label{prop.equiv}
    Suppose that $V$ is a viscosity subsolution to \eqref{hjbinf}, and let $C > 0$ be the constant appearing in Definition \ref{defn.viscositysoln}. Then, for any $\Phi \in C^{1,2}([0,T] \times H^{-1})$, $\delta > 0$, and $(t_0,m_0) \in [0,T) \times (\sub \cap L^2)$ such that \begin{align} \label{touchingfromabove.frechet}
       V(t_0,m_0) - \Phi(t_0,m_0) - \delta \|m_0\|_{L^2}^2 = \sup_{(t,m) \in [0,T] \times (\sub \cap L^2(\Omega))} \Big\{V(t,m) - \Phi(t,m) - \delta \|m\|_{L^2}^2 \Big\}, 
    \end{align}
    we have $m_0 \in H_0^1$, and
    \begin{equation}\label{subsol.bound.frechet}
    \begin{split} 
      &  - \partial_t \Phi(t_0,m_0) + \delta \int_{\Omega} |Dm_0(x)|^2 dx + \int_{\Omega} D_x D_{H^{-1}} \Phi(t_0,m_0,x) \cdot Dm_0(x) dx %\nonumber 
        \\
       &\qquad  + \int_{\Omega} H\Big( x, D_x D_{H^{-1}} \Phi(t_0,m_0,x)\Big) m_0(x) dx \leq F(m_0) + C \delta \|m_0\|_{L^2}^2.
    \end{split}
    \end{equation}
    Similarly, if $V$ is a viscosity supersolution and $C$ is the constant appearing in Definition \ref{defn.viscositysoln}, then, for any $\Phi \in C^{1,2}([0,T] \times H^{-1})$, $\delta > 0$, and $(t_0,m_0) \in [0,T) \times (\sub \cap L^2)$ such that \eqref{touchingfromabove} holds, we have $m_0 \in H_0^1$, and 
    \begin{equation}\label{supersol.bound.frechet}
    \begin{split}%\label{supersol.bound.frechet}
      &  - \partial_t \Phi(t_0,m_0) - \delta \int_{\Omega} |Dm_0(x)|^2 dx + \int_{\Omega} D_x D_{H^{-1}} \Phi(t_0,m_0,x) \cdot Dm_0(x) dx %\nonumber 
        \\
       &\qquad  + \int_{\Omega} H\Big( x, D_x D_{H^{-1}}\Phi(t_0,m_0,x)\Big) m_0(x) dx \geq F(m_0) - C \delta \|m_0\|_{L^2}^2.
    \end{split}
    \end{equation}
\end{proposition}

In other words, the proposition above  says that, if $V$ is a subsolution in the sense of Definition \ref{defn.viscositysoln}, then it also satisfies the subsolution test for test functions in the class $C^{1,2}([0,T] \times H^{-1})$. 

To prove Proposition \ref{prop.equiv}, we need a regularization procedure, which approximates a given $\Phi \in C^{1,2}([0,T] \times H^{-1})$ by smooth test functions. For this, we fix $\theta_0 > 0$ and a family $(f_{\theta})_{\theta \in (0,\theta_0)}$ of diffeomorphisms $f_{\theta} : \R^d \to \R^d$ such that 
\begin{equation*}
\begin{cases}
%\begin{itemize}
 f_{\theta} \in C^{\infty}, \text{and there is a constant $C_0$ which is independent of $\theta$ such that, for all  $\theta \in (0,\theta_0)$, }\\[1.2mm]
  
\hskip1.5in   \|Df_{\theta}\|_{\infty} + \|D (f_{\theta}^{-1}) \|_{\infty} \leq C_0,\\[1.5mm]
   
\text{$f_{\theta} \to \text{Id}$ \ and \ $Df_{\theta} \to I_{d \times d}$ uniformly on $\R^d$ as $\theta \to  0$,}\\[1.2mm]

\text{for each $\theta \in (0,\theta_0)$ \ $f_{\theta}$ maps $\cN_{\theta}$ into $\Omega^c$, and }
\end{cases}
\end{equation*}
where $\text{Id}$ indicates the identity function on $\R^d$.  We note that to build such a family of diffeomorphisms, it suffices to take $f_{\theta}$ of the form 
\begin{align*}
    f_{\theta}(x) = x + \theta V(x), 
\end{align*}
where $V : \R^d \to \R^d$ is any smooth, compactly supported vector field with 
\begin{align*}
     V(x) \cdot  D d^{\text{s}}_{\partial \Omega}(x) \leq - 2
\end{align*}
in a neighborhood of the boundary. Indeed, the only non-trivial point to check is that $f_{\theta}$ maps $\cN_{\theta}$ into $\Omega^c$ for small enough $\theta$, and this can be verified by taking a second-order Taylor expansion of the signed distance function $d^{\text{s}}_{\partial \Omega}$; for $\theta$ small enough and $x \in \cN_{\theta}$, 
\begin{align*}
    d^{\text{s}}_{\partial \Omega}(f_{\theta}(x)) &\leq d^{\text{s}}_{\partial \Omega}(x) + \theta V(x) \cdot D d^{\text{s}}_{\partial \Omega}(x) + C\theta^2 \leq d_{\partial \Omega}^s(x) - 2\theta + C \theta^2, 
\end{align*}
and so for $\theta$ small enough $f_{\theta}$ indeed maps $\cN_{\eps}$ into $\Omega^c$.

We also fix an even approximation to the identity $(\rho_{\eta})_{\eta > 0}$ with $\text{supp}(\rho_{\eta}) \subset B_{\eta} = B_{\eta}(0)$, that is,
$\rho_{\eta}(x) = \eta^{-d} \rho(x/\eta)$, with the 
 smooth function $\rho : \R^d \to \R$ satisfying $\rho(x) \geq 0, \ \rho(x) = \rho(-x),$ and $\int_{\R^d} \rho(x) dx = 1.$
%\
%begin{align*}
%    \rho(x) \geq 0, \quad \rho(x) = \rho(-x), \quad \int_{\R^d} \rho(x) dx = 1, 
%\end{align*}
%and then set, for $\eta>0$,
%\begin{align*}
%    \rho_{\eta}(x) = \eta^{-d} \rho(x/\eta).
%\end{align*}
%
%Finally, in what follows, we extend any  $\phi \in H_0^1 = H_0^1(\Omega)$ by zero to become an element of $H_0^1(\R^d)$, without changing notation.

\begin{lemma} \label{lem.linearoperation}
    For each $\theta \in (0,\theta_0)$, the map $\phi \mapsto \phi \circ f_{\theta}$
    is a bounded linear operator on $H_0^1(\Omega)$. Moreover, for $\theta \in (0,\theta_0)$ and $\eta \in (0,\theta/2)$, the map
    \begin{align} \label{T.theta.eta}
        T_{\theta, \eta} \phi = \rho_{\eta} * \big( \phi \circ f_{\theta} \big)
    \end{align}
    is a bounded linear operator on $H_0^1(\Omega)$.
\end{lemma}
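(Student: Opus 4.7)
The plan is to handle the two claims separately. Linearity is immediate in both cases, so the task reduces to proving boundedness and verifying that each operator lands in $H_0^1(\Omega)$. Throughout, I identify $\phi \in H_0^1(\Omega)$ with its extension by zero to $\R^d$, which lies in $H^1(\R^d)$.

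For the first map, boundedness in the $L^2$ and $H^1$ norms is a routine change-of-variables estimate. Writing $D(\phi \circ f_\theta) = (Df_\theta)^T \cdot (D\phi \circ f_\theta)$ and substituting $y = f_\theta(x)$, the hypotheses $\|Df_\theta\|_\infty \leq C_0$ and $\|D(f_\theta^{-1})\|_\infty \leq C_0$ yield a bounded Jacobian and therefore
\begin{equation*}
\|\phi \circ f_\theta\|_{L^2(\Omega)}^2 + \|D(\phi \circ f_\theta)\|_{L^2(\Omega)}^2 \leq C \|\phi\|_{H_0^1(\Omega)}^2.
\end{equation*}
To check the boundary vanishing, I use the key assumption $f_\theta(\cN_\theta) \subset \Omega^c$: for any $x \in \cN_\theta$, we have $\phi(f_\theta(x)) = 0$ since $\phi$ (extended by zero) vanishes on $\Omega^c$. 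Therefore $\phi \circ f_\theta$ is zero on the one-sided neighborhood $\cN_\theta$ of $\partial \Omega$; its support in $\Omega$ sits in the closed set $\{x \in \Omega : d_{\partial \Omega}(x) \geq \theta\}$, and so $\phi \circ f_\theta \in H_0^1(\Omega)$.

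For the second map, I extend $\phi \circ f_\theta$ by zero to $\R^d$ (which by the previous step preserves $H^1$ regularity since the function is compactly supported in $\Omega$). Young's convolution inequality gives $\|\rho_\eta * g\|_{L^2(\R^d)} \leq \|\rho_\eta\|_{L^1} \|g\|_{L^2(\R^d)} = \|g\|_{L^2(\R^d)}$, and since convolution commutes with differentiation, the same bound applies to $D(\rho_\eta * g) = \rho_\eta * Dg$. Combining this with Step 1 yields $\|T_{\theta,\eta}\phi\|_{H^1(\R^d)} \leq C \|\phi\|_{H_0^1(\Omega)}$. For the support: since $\mathrm{supp}(\phi \circ f_\theta) \subset \{d_{\partial \Omega} \geq \theta\} \cap \Omega$ and $\mathrm{supp}(\rho_\eta) \subset B_\eta$, the convolution satisfies $\mathrm{supp}(T_{\theta,\eta}\phi) \subset \{d_{\partial \Omega} \geq \theta - \eta\}$; the condition $\eta < \theta/2$ then forces this support into $\{d_{\partial \Omega} \geq \theta/2\}$, a compact subset of $\Omega$. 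Hence $T_{\theta,\eta}\phi \in H_0^1(\Omega)$.

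The main technical point is the second claim's support analysis, which is exactly where the specific choice $\eta < \theta/2$ is used. The buffer between $\eta$ and $\theta$ is what prevents convolution from producing nonzero boundary values, and it will play the same role later when $T_{\theta,\eta}$ is used to approximate test functions in the proof of Proposition \ref{prop.equiv}. Everything else is standard and requires no estimates beyond the uniform control of $Df_\theta$ and its inverse.
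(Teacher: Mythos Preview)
Your proof is correct and follows essentially the same approach as the paper's: extend $\phi$ by zero, use change of variables with the uniform bounds on $Df_\theta$ and $D(f_\theta^{-1})$ for the $H^1$ estimate, and use $f_\theta(\cN_\theta) \subset \Omega^c$ together with $\eta < \theta/2$ for the support analysis. The only differences are organizational---you treat the two maps in sequence while the paper does the support arguments for both first and then the norm bounds---and you make explicit the use of Young's inequality where the paper simply says the boundedness of $T_{\theta,\eta}$ ``follows easily.''
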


\begin{proof}
 Given any $\phi \in H_0^1 = H_0^1(\Omega)$,  we extend it  (without changing the notation) by zero to all of $\R^d$ and note that this extension is in $H_0^1(\R^d)$. 

Since $f_{\theta}(x) \in \Omega^c$ for $x \in \cN_{\theta}$, we have $\phi \circ f_{\theta} = 0$ on $\cN_{\theta} \cup \Omega^c$, and, in view of the fact that, for $\eta < \theta/2$,  $\text{supp}(\rho_{\eta}) \subset B_{\eta}$, it follows that $\rho_{\eta} * (\phi \circ f_{\theta}) = 0$ for $x \in \cN_{\theta/2}$. Thus for any $\phi \in H_0^1(\Omega)$, we have $T_{\theta, \eta} \phi = 0$ on $\partial \Omega$. 
    
Using next the change of variables $f_{\theta}(x) = y$, we find 
    \begin{align*}
        \int_{\Omega} &|\phi \circ f_{\theta}(x)|^2 dx = \int_{\R^d} |\phi \circ f_{\theta}(x)|^2 dx 
        \\
        &= \int_{\R^d} |\phi(y)|^2 \Big(J(f_{\theta})\big(f_{\theta}^{-1}(y)\big) \Big)^{-1} dy \leq C \int_{\R^d} |\phi(y)|^2, 
    \end{align*}
    where $J(f_{\theta})$ is the Jacobian of $f_{\theta}$, and the last bound comes from the fact that $Df_{\theta}^{-1} \leq C_0 I_{d \times d}$. An analogous computation gives
    \begin{align*}
        \int_{\Omega} &\big| D \big( \phi \circ f_{\theta} \big) (x) \big|^2 dx  \leq C \int_{\R^d} |D \phi(y)|^2 dy. 
    \end{align*}
    It follows that $\| \phi \circ f_{\theta} \|_{H_0^1} \leq C \| \phi \|_{H_0^1}.$
    The fact that $T_{\theta, \eta}$ is bounded follows easily.  
 \end{proof}

Using the notation for push-forward of measures, next we denote by $(f_{\theta})_{\#}$ the bounded linear map $H^{-1} \to H^{-1}$ which is the adjoint of $\phi \mapsto \phi \circ f_{\theta}$, that is, we define, for each $ q \in H^{-1}$, 
\begin{align*}
    \langle (f_{\theta})_{\#} q, \phi \rangle_{-1,1} = \langle q, \phi \circ f_{\theta} \rangle_{-1,1}.% \quad q \in H^{-1}.
\end{align*}
%This is consistent with the notation for push-forward of measures, as one can easily check. 
Finally, we note that the adjoint $T_{\theta,\eta}^*$ of $T_{\theta, \eta}$ is the bounded linear map $H^{-1} \to H^{-1}$ given, for each $ q \in H^{-1}$,  by 
\begin{align*}
    S_{\theta, \eta} q = (f_{\theta})_{\#}\big(q * \rho_{\eta} \big). % \quad q \in H^{-1}.
\end{align*}
Indeed, because $\rho_{\eta}$ is even, we have 
\begin{align*}
    \langle q, T_{\theta, \eta} \phi \rangle_{-1,1} = \langle q * \rho_{\eta}, \phi \circ f_{\theta} \rangle_{-1,1} = \langle (f_{\theta})_{\#}\big(q * \rho_{\eta} \big), \phi \rangle_{-1,1} = \langle S_{\theta, \eta} q, \phi \rangle_{-1,1}.
\end{align*}
We note also that if $m \in \sub$, then $\rho_{\eta} * m$ is smooth, and, in particular,  is in $H^{-1}$, and, thus, we can make sense of $S_{\theta, \eta} m$. In particular, we can set, for each $m \in \sub$,
\begin{align*}
     S_{\theta, \eta} m = (f_{\theta})_{\#}\big(m * \rho_{\eta} \big).  
\end{align*}
It is then straightforward to check that the map $\sub \ni m \mapsto S_{\theta, \eta} m \in (\sub \cap H^{-1})$ is continuous (in fact Lipschitz) with respect to the metric $\bd$, in the sense that 
\begin{align*}
  \norm{S_{\theta, \eta} m' - S_{\theta, \eta} m}_{-1} \leq C_{\theta, \eta} \bd(m',m). 
\end{align*}

We are finally ready to describe the regularization scheme.
\begin{lemma} \label{lem.Phi.theta.eta}
% Fix  $\Phi \in C^{1,2}([0,T] \times H^{-1})$, and, 
 For each $\theta \in (0,\theta_0)$, $\eta \in (0,\theta/2)$, and $\Phi \in C^{1,2}([0,T] \times H^{-1})$, and $(t,m) \in [0,T] \times \sub$ define 
\begin{align} \label{phi.theta.eta}
    \Phi_{\theta,\eta}(t,m) = \Phi\big(t, S_{\theta, \eta} m\big) = \Phi\Big(t, (f_{\theta})_{\#} (\rho_{\eta} * m) \Big).
\end{align}
Then $\Phi_{\theta, \eta}$ is a smooth test function in the sense of Definition \ref{def.testfunction}, and, moreover, % satisfies 
\begin{align} \label{Phi.derivformulas1}
    &\frac{\delta \Phi_{\theta, \eta}}{\delta m}(t,m,x) = T_{\theta, \eta} D_{H^{-1}} \Phi\big(t,S_{\theta, \eta} m\big)
   = \rho_{\eta} * \Big[ D_{H^{-1}} \Phi \Big(t,(f_{\theta})_{\#} \big(\rho_{\eta} * m\big), f_{\theta}(\cdot) \Big) \Big](x), 
    \\[1.5mm] \label{Phi.derivformulas2}
    &\frac{\delta^2 \Phi_{\theta, \eta}}{ \delta m^2} (t,m,x,y) =  D_{H^{-1}}^2 \Phi\big( t, S_{\theta, \eta} \big)\big( S_{\theta, \eta} \delta_x, S_{\theta, \eta} \delta_y \big).
\end{align}
\end{lemma}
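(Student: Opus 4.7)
The plan is to compute the derivatives of $\Phi_{\theta,\eta}$ by applying the chain rule in $H^{-1}$, and then to verify the regularity requirements of Definition \ref{def.testfunction} using the smoothing properties of the operator $T_{\theta,\eta}$ already developed in Lemma \ref{lem.linearoperation}.

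First, $S_{\theta,\eta}$ is linear (it is the composition of the two linear operations $m \mapsto \rho_{\eta} * m$ and $\mu \mapsto (f_\theta)_\# \mu$) and Lipschitz continuous as a map $(\sub, \bd) \to (H^{-1}, \|\cdot\|_{H^{-1}})$. Since $\Phi(t,\cdot) \in C^2(H^{-1})$, the fundamental theorem of calculus applied to $s \mapsto \Phi\bigl(t, S_{\theta,\eta}(sm' + (1-s)m)\bigr)$ yields
\begin{equation*}
\Phi_{\theta,\eta}(t, m') - \Phi_{\theta,\eta}(t, m) = \int_0^1 \langle D_{H^{-1}}\Phi\bigl(t, S_{\theta,\eta}\tilde m_s\bigr), S_{\theta,\eta}(m' - m)\rangle_{-1,1}\, ds, \qquad \tilde m_s := sm' + (1-s)m.
\end{equation*}
A direct computation using that $\rho_\eta$ is even and Fubini shows that, for any finite signed measure $\mu$ on $\Omega$ and any $\phi \in H_0^1$,
\begin{equation*}
\langle S_{\theta,\eta}\mu, \phi\rangle_{-1,1} = \int_\Omega T_{\theta,\eta}\phi\, d\mu.
\end{equation*}
Applying this with $\mu = m' - m$ and $\phi = D_{H^{-1}}\Phi(t, S_{\theta,\eta}\tilde m_s) \in H_0^1$ converts the pairing into an integral against $m' - m$, and, by uniqueness of the linear functional derivative, identifies $\frac{\delta \Phi_{\theta,\eta}}{\delta m}(t,m,\cdot)$ with $T_{\theta,\eta} D_{H^{-1}}\Phi(t, S_{\theta,\eta}m)$; this is \eqref{Phi.derivformulas1}. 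The same procedure applied to $m \mapsto \frac{\delta \Phi_{\theta,\eta}}{\delta m}(t, m, x) = \langle S_{\theta,\eta}\delta_x, D_{H^{-1}}\Phi(t, S_{\theta,\eta}m)\rangle_{-1,1}$ (which makes sense because $S_{\theta,\eta}\delta_x$ is smooth and compactly supported, hence lies in $L^2 \subset H^{-1}$) then produces \eqref{Phi.derivformulas2}.

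The regularity requirements of Definition \ref{def.testfunction} follow from the smoothing properties of $T_{\theta,\eta}$. For any $\phi \in H_0^1$, the function $T_{\theta,\eta}\phi = \rho_\eta * (\phi \circ f_\theta)$ is $C^\infty$ on $\R^d$ (as a convolution of an $L^2$ function with $\rho_\eta \in C_c^\infty$), is compactly supported in $\R^d$, and, as shown in the proof of Lemma \ref{lem.linearoperation}, vanishes identically on $\cN_{\theta/2}$; hence its restriction to $\Omega$ lies in $C_c^\infty(\Omega) \subset C_c^2(\Omega)$. Applying this to $\phi = D_{H^{-1}}\Phi(t, S_{\theta,\eta}m)$ gives the required structure of $\frac{\delta \Phi_{\theta,\eta}}{\delta m}(t,m,\cdot)$. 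For the second derivative, using the representation $\frac{\delta^2\Phi_{\theta,\eta}}{\delta m^2}(t,m,x,y) = T_{\theta,\eta}\bigl[D^2_{H^{-1}}\Phi(t, S_{\theta,\eta}m)(S_{\theta,\eta}\delta_y)\bigr](x)$ together with the smooth dependence of $y \mapsto S_{\theta,\eta}\delta_y$ as an $L^2$-valued map and the continuity of the bilinear form $D^2_{H^{-1}}\Phi$ gives joint $C^1$ regularity in $(x,y)$ and compact support in $\Omega \times \Omega$.

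Finally, continuity of $(t,m) \mapsto \frac{\delta \Phi_{\theta,\eta}}{\delta m}(t,m,\cdot) \in C^2(\overline{\Omega})$ and of $(t,m) \mapsto \frac{\delta^2 \Phi_{\theta,\eta}}{\delta m^2}(t,m,\cdot,\cdot) \in C^1(\overline{\Omega}\times\overline{\Omega})$ follows by combining the Lipschitz continuity of $S_{\theta,\eta}$, the continuity of $D_{H^{-1}}\Phi$ and $D^2_{H^{-1}}\Phi$, and the standard fact that convolution with $\rho_\eta \in C_c^\infty$ upgrades $L^2$ convergence to uniform convergence of all spatial derivatives on compact sets. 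Boundedness of these maps is then automatic from compactness of $[0,T] \times \sub$ in $\bd$. The only delicate point I foresee is securing compact support in $\Omega$ for the derivatives: a naive mollification of $D_{H^{-1}}\Phi$ in the $x$ variable would leave derivatives supported all the way up to $\partial\Omega$, and it is precisely the prior composition with $f_\theta$, which pushes a boundary collar outside $\Omega$ before the mollification, that remedies this.
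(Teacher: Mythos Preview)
Your proposal is correct and follows essentially the same approach as the paper: both compute the linear derivatives via the chain rule in $H^{-1}$ using the duality $\langle S_{\theta,\eta}\mu,\phi\rangle_{-1,1}=\int T_{\theta,\eta}\phi\,d\mu$, and both rely on the fact that precomposition with $f_\theta$ pushes a boundary collar outside $\Omega$ before mollification to secure compact support of the derivatives. The paper carries out the $C^1$ regularity of $\frac{\delta^2\Phi_{\theta,\eta}}{\delta m^2}$ in $(x,y)$ by explicitly differentiating through the bilinear form (writing $D_{x_j}\frac{\delta^2\Phi_{\theta,\eta}}{\delta m^2}=D^2_{H^{-1}}\Phi(\cdot)\bigl((f_\theta)_\#(\rho_\eta*D_j\delta_x),(f_\theta)_\#(\rho_\eta*\delta_y)\bigr)$), whereas you invoke the smooth $L^2$-valued dependence of $y\mapsto S_{\theta,\eta}\delta_y$; these are the same computation in different packaging.
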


\begin{proof}
  The fact that $\frac{\delta \Phi_{\theta, \eta}}{\delta m}$ and $\frac{\delta^2 \Phi_{\theta, \eta}}{\delta m^2}$ exist and satisfy the formulas \eqref{Phi.derivformulas1} and \eqref{Phi.derivformulas2} is the consequence of a straightforward but tedious calculation, which we omit.
 To see that $(x,y) \mapsto \frac{\delta^2 \Phi_{\theta, \eta}}{\delta m^2}(t,m,x,y)$ is smooth, one can use the formula above for $\frac{\delta \Phi_{\theta, \eta}}{\delta m}$ to explicitly compute 
   \begin{align*}
       D^{\alpha}_x D^{\beta}_y \frac{\delta^2 \Phi_{\theta, \eta}}{\delta m^2} (t,m,x,y) = D_{H^{-1}}^2 \Phi\big(t, (f_{\theta})_{\#} (m * \rho_{\eta}) \big)\Big((f_{\theta})_{\#} (\rho_{\eta} * D^{\alpha} \delta_x, (f_{\theta})_{\#} ( \rho_{\eta} * D^{\beta} \delta_y ) \Big).
   \end{align*}
   for any multi-indices $\alpha$ and $\beta$. 
 Moreover, one can check,  using the representations \eqref{Phi.derivformulas1}, \eqref{Phi.derivformulas2} and the fact that $\sub \ni m \mapsto S_{\theta, \eta} m \in H^{-1}$ is continuous, that the maps
   \begin{align*}
    [0,T] \times \sub \ni (t,m) \mapsto \frac{\delta \Phi_{\theta,\eta}}{\delta m}(t,m,\cdot) \in C^2(\ov{\Omega}), 
    \end{align*}
and 
 \begin{align*}    
    [0,T] \times \sub \ni (t,m) \mapsto \frac{\delta^2 \Phi_{\theta, \eta}}{\delta m^2}(t,m,\cdot, \cdot) \in C^1(\ov{\Omega} \times \ov{\Omega})
    \end{align*}
    are continuous and bounded. 
  
    Finally, we note that, since  $f_{\theta}$ maps $\cN_{\theta}$ into $\Omega^c$, for each $x  \ \text{in} \  \cN_{\theta}$, we have 
    \begin{align*}
        D_{H^{-1}} \Phi\Big( t, (f_{\theta})_{\#} (\rho_{\eta} * m), f_{\theta}(x) \Big) = 0 \quad \text{for $x$ in } \cN_{\theta}, 
    \end{align*}
and,  because $\eta < \theta/2$, we see from \eqref{Phi.derivformulas1} that $\frac{\delta \Phi_{\theta,\eta}}{\delta m}(t,x,m) = 0$ in a neighborhood of $\partial \Omega$.

%Similarly,  if $x$ is close enough to the boundary, then $\rho_{\eta} * \delta_x$ is supported in $\cN_{\theta}$, and so $(f_{\theta})_{\#} (\rho_{\eta} * \delta_x) = 0$ as an element of $H^{-1}$. Thus,  for $x$ or $y$ in a small enough neighborhood of the boundary,  $\frac{\delta^2 \Phi_{\theta, \eta}}{\delta m^2}(t,m,x,y) = 0$. The proof is now complete. 

 \end{proof}

We proceed now with the proof of Proposition \ref{prop.equiv}.

\begin{proof}{Proof of Proposition \ref{prop.equiv}.}
    We only prove the result for subsolutions, the corresponding result for supersolutions being analogous. Moreover, all limits in the proof are as $\theta, \eta\to 0$, a fact that we will not keep repeating. 
    
 Let $V$ be a viscosity subsolution,  and assume that  $(t_0,m_0) \in [0,T) \times (\sub \cap L^2)$ is the unique optimizer of the problem 
    \begin{align} \label{subolcond.2}
        \sup_{t \in [0,T], \, m \in \sub \cap L^2} \Big\{ V(t,m) - \Phi(t,m) - \delta \|m\|_2^2 \Big\}, 
    \end{align}
    with $\Phi \in C^{1,2}([0,T] \times H^{-1})$ and  $\delta > 0$. We wish to show that $m_0 \in H_0^1(\Omega)$, and that the inequality \eqref{subsol.bound} holds. For each $\theta \in (0,\theta_0)$ and $\eta \in (0,\theta_0/2)$,  we consider the optimization problems 
  \begin{align} \label{thetaproblem}
        \sup_{t \in [0,T], \, m \in \sub \cap L^2} \Big\{ V(t,m) - \Phi_{\theta}(t,m) - \delta \|m\|_2^2 \Big\}, 
    \end{align}
  and 
\begin{align} \label{theta.eta.problem}
        \sup_{t \in [0,T], \, m \in \sub \cap L^2} \Big\{ V(t,m) - \Phi_{\theta, \eta}(t,m) - \delta \|m\|_2^2 \Big\}, 
    \end{align}  
with    $ \Phi_{\theta}(t,m) = \Phi(t,(f_{\theta})_{\#}m)$ and $\Phi_{\theta, \eta}$ as in \eqref{phi.theta.eta}. Since the rest of the proof is rather long, we divide in several steps.
\newline \newline 
\noindent 
   \textit{Step 1 - convergence of the optimizers.} We  recall that the map $q \mapsto (f_{\theta})_{\#} q$ is a bounded linear map on $H^{-1}$ and as a consequence
    \begin{align*}
        [0,T] \times (\sub \cap L^2) \ni (t,m) \mapsto V(t,m) - \Phi_{\theta}(t,m) - \delta\|m\|_2^2
    \end{align*}
    is upper semi-continuous with respect to the weak $L^2$ topology for $m$. It then follows  that the problem \eqref{thetaproblem} admits at least one optimizer.

    Next, we claim that $\Phi_{\theta} \to \Phi$ uniformly on subsets of $L^2 \cap \sub$ which are bounded in $L^2$. Indeed, for $m \in L^2 \cap \sub$, and $\phi \in H_0^1$ with $\|\phi\|_{H_0^1} \leq 1$, we have 
    \begin{align*}
        \int_{\Omega} \phi \, d \big( (f_{\theta})_{\#} m - m \big) = \int_{\Omega} \Big( \phi(f_{\theta}(x)) - \phi(x) \Big) m(x) dx \leq \|\phi \circ f_{\theta} - \phi \|_{2} \|m\|_{2}, 
    \end{align*}
    and, for $\theta$ small enough, 
    \begin{align*}
       \| \phi \circ f_{\theta} - \phi\|_2^2 &= \int_{\R^d} \Big| \int_0^1 D\phi\big(x + t(f_{\theta}(x) - x) \big) \cdot (f_{\theta}(x) - x) dt \Big|^2 dx
       \\
       &\leq \|f_{\theta} - \text{Id}\|_{\infty}^2 \int_{0}^1 \int_{\R^d} \big| \nabla \phi\big( x + t(f_{\theta}(x) - x) \big) \big|^2 dx dt
       \\
       &\leq C \|f_{\theta} - \text{Id}\|_{\infty}^2. 
    \end{align*}
 The last bound above is coming from the fact that $\|\phi\|_{H_0^1} \leq 1$ together with a change of variables and the fact that, since $Df_{\theta} \to I_{d \times d}$ uniformly,  the Jacobian of $x \mapsto x + t(f_{\theta}(x) - x)$ is bounded from below uniformly in $t$ and $\theta$, for all $\theta$ small enough. 
 
 Thus we have found that there is a constant $C$ such that, for all $\theta$ small enough, 
    \begin{align*}
        \|(f_{\theta})_{\#} m - m\|_{H^{-1}} \leq C \|f_{\theta} - \text{Id}\|_{\infty} \|m\|_{2}.
    \end{align*}
  Since $\Phi$ is Lipschitz continuous  uniformly on sets which are bounded in $L^2$, it follows  that $\Phi_{\theta} \to \Phi$ uniformly on bounded, with respect to the $L^2$ norm,  subsets of $\sub \cap L^2$. Since $(t_0,m_0)$ is the unique optimizer for \eqref{subolcond.2}, this uniform convergence allows us to conclude that if $(t_{\theta}, m_{\theta})$ is any optimizer for \eqref{thetaproblem}, then we must have $(t_{\theta}, m_{\theta}) \to (t_0,m_0)$, 
    with the convergence of $m_{\theta}$ with respect to the weak topology of $L^2$. In particular, since $t_0 < T$, we see that there is a constant $\theta_0 > 0$ such that for all $\theta < \theta_0$, any optimizer $(t_{\theta}, m_{\theta})$ for \eqref{thetaproblem} satisfies 
    \begin{align} \label{theta.noterminal}
        t_{\theta} < T. 
    \end{align}

    We now turn to the problem \eqref{theta.eta.problem}. Again, using the continuity of $m \mapsto (f_{\theta})_{\#} (\rho_{\eta} * m)$ with respect to $H^{-1}$, and hence with respect to the weak topology on $L^2$, we conclude that, for each $\theta, \eta > 0$, there exists at least one optimizer for the problem \eqref{theta.eta.problem}.

    Moreover, for fixed $\theta$  and $\phi \in H_0^1$, we have
    \begin{align} \label{f.theta.eta.conv}
        \int_{\Omega} \phi \Big( (f_{\theta})_{\#}(m * \rho_{\eta}) - (f_{\theta})_{\#} m \Big) &= \int_{\R^d} \Big( (\phi \circ f_{\theta}) * \rho_{\eta} (x) - (\phi \circ f_{\theta})\Big)  m(x) dx 
        \nonumber \\
        &\leq \norm{(\phi \circ f_{\theta}) * \rho_{\eta} - (\phi \circ f_{\theta})}_{2} \|m\|_2.
    \end{align}
    Now, since $Df_{\theta} \to I_{d \times d}$ uniformly, a change of variables shows that, for $\theta$ small enough, $\| \phi \circ f_{\theta}\|_{H_0^1} \leq 2$, so that, setting for simplicity $\psi = \phi \circ f_{\theta}$, 
    \begin{align*}
        \|\psi * \rho_{\eta} - \psi\|_2^2 &= \int_{\R^d} \Big| \int_{\R^d} \rho_{\eta}(y) \big(\psi(x) - \psi(x-y) \big) dt \Big|^2 dx
        \\
        &= \int_{\R^d} \Big| \int_{\R^d} \rho_{\eta}(y) \Big( \int_0^1 D \psi(x-y -ty) \cdot y dt \Big) \Big|^2 dx 
        \\
        &\leq \eta^2 \int_0^1 \int_{\R^d} \int_{\R^d} \rho_{\eta}(y) |D \psi(x -y + ty)|^2 dx dy dt \leq 4\eta^2.
    \end{align*}
    In particular, combining this with \eqref{f.theta.eta.conv}, we see that for each $\theta$ small enough, $(f_{\theta})_{\#}(m * \rho_{\eta}) \to (f_{\theta})_{\#} m$,
    uniformly on bounded (with respect to $L^2$) subsets of $\sub \cap L^2$. Thus $\Phi_{\theta, \eta} \to \Phi_{\theta}$ uniformly on bounded subsets as $\eta \to 0$.
    
     We conclude that, for each $\theta > 0$ small enough, and any optimizers $(t_{\theta, \eta}, m_{\theta, \eta})$ for the problem \eqref{theta.eta.problem}, $m_{\theta, \eta}$ is bounded in $L^2$ and any weak limit point (as $\eta \downarrow 0$) $(t_{\theta}, m_{\theta})$ is an optimizer for \eqref{thetaproblem}. In particular, from \eqref{theta.noterminal}, we see that for all $\theta$ small enough, there exists $\eta_0$ depending on $\theta$ such that for $\eta < \eta_0$, any optimizer $(t_{\theta, \eta},m_{\theta, \eta})$ must satisfy
    \begin{align} \label{theta.eta.noterminal}
        t_{\theta,\eta} < T. 
    \end{align}
    \textit{Step 2 - Applying the subsolution test with $\theta, \eta$ fixed:} We now fix $\theta$ and then choose $\eta$ small enough so  that \eqref{theta.eta.noterminal} holds. Then Lemma \ref{lem.Phi.theta.eta} and Definition \ref{defn.viscositysoln} allow us to conclude that, for any optimizer $(t_{\theta,\eta},m_{\theta,\eta})$ for \eqref{theta.eta.problem}, we have
    \begin{align} \label{theta.eta.subsol}
    &- \partial_t \Phi_{\theta, \eta_j}\big(t_{\theta,\eta}, m_{\theta,\eta} \big) + \int_{\R^d} Dm_{\theta, \eta} \cdot \Big(\rho_{\eta} \ast \Big[ \big( Df_{\theta}(x) \big)^T D_x D_{H^{-1}} \Phi\Big(t_{\theta,\eta},(f_{\theta})_{\#} \big(\rho_{\eta} * m_{\theta,\eta}\big), f_{\theta}(x) \Big) \Big] \Big) dx
    \nonumber \\
    &\qquad + \delta \int_{\R^d} |D m_{\theta,\eta}|^2 dx  + \int_{\Omega} H\Bigl( 
\rho_{\eta} * \Big[ \big( Df_{\theta}(x) \big)^T D_x D_{H^{-1}} \Phi\Big((f_{\theta})_{\#} \big(\rho_{\eta} * m_{\theta,\eta}\big), f_{\theta}(\cdot) \Big) \Big](x) \Big)  m_{\theta,\eta}(dx)  
\nonumber \\
&\qquad \qquad \leq C \delta \int_{\R^d} \big| m_{\theta,\eta} \big|^2 dx.
\end{align}
\textit{Step 3 - Sending $\eta \to 0$.}
By Step 1, we can choose a sequence $\eta_j \to {0}$ such that $(t_{\theta, \eta_j}, m_{\theta, \eta_j})$ converges weakly in $L^2$ to some optimizer $(t_{\theta},m_{\theta})$ for the problem \eqref{theta.eta.problem}. For simplicity, we set $(t_j, m_j) = (t_{\theta, \eta_j}, m_{\theta, \eta_j})$. 

The optimality of the $(t_j,m_j)$'s implies that $m_j$'s are bounded in $L^2$, uniformly in $j$, and   \eqref{theta.eta.subsol} yields that  $m_j$'s  are  bounded, uniformly in $j$,  in $H_0^1$. Thus, as $j\to \infty$,  
\begin{align*}
    m_j  \to m_\theta \ {\text{weakly in $H_0^1$ and  strongly in $L^2$}}.
\end{align*}
It then follows easily that, as $j\to \infty$, 
\[
   \rho_{\eta_j} * m_j  \to m_{\theta} \ \text{strongly in $H^{-1}$},\]
which implies that, as $j\to \infty$, 
   \[(f_{\theta})_{\#} \big(\rho_{\eta_j} * m_j\big) \to (f_{\theta})_{\#} m_{\theta} \ \text{strongly in $H^{-1}$} ,\] 
and, then, 
\[ \rho_{\eta_j} * D_x D_{H^{-1}} \Phi\Big((f_{\theta})_{\#} \big(\rho_{\eta_j} * m_j\big), f_{\theta}(\cdot) \Big) \to 
%\xrightarrow[j \to \infty]{\text{strongly in $L^2(\Omega)$}} 
D_x D_{H^{-1}} \Phi\Big((f_{\theta})_{\#} \big(m_{\theta} \big), f_{\theta}(\cdot) \Big) \ \text{strongly in $L^2(\Omega)$}.\]
%\[D_x D_{H^{-1}} \Phi\Big((f_{\theta}) 
%
%    & \qquad  \implies D_x D_{H^{-1}} \Phi\Big((f_{\theta})_{\#} \big(\rho_{\eta_j} * m_j\big), f_{\theta}(\cdot) \Big) \xrightarrow[j \to \infty]{\text{strongly in $L^2(\Omega)$}} D_x D_{H^{-1}} \Phi\Big((f_{\theta})_{\#} \big(m_{\theta} \big), f_{\theta}(\cdot) \Big)
%    \\
%    & \qquad  \implies \rho_{\eta_j} * D_x D_{H^{-1}} \Phi\Big((f_{\theta})_{\#} \big(\rho_{\eta_j} * m_j\big), f_{\theta}(\cdot) \Big) \xrightarrow[j \to \infty]{\text{strongly in $L^2(\Omega)$}} D_x D_{H^{-1}} \Phi\Big((f_{\theta})_{\#} \big(m_{\theta} \big), f_{\theta}(\cdot) \Big)
%\end{align*}
The last claim  together with the strong convergence of $m_j$ in $L^2$ and the weak convergence of $Dm_j$ in $L^2$ allow  to pass to the limit in \eqref{theta.eta.subsol} and find that $(t_{\theta}, m_{\theta})$ satisfies
\begin{equation} \label{theta.subsol}
\begin{split}
 &  - \partial_t \Phi_{\theta}\big(t_{\theta}, m_{\theta} \big) + \int_{\R^d} Dm_{\theta} \cdot \Big(  \big( Df_{\theta}(x) \big)^T D_x D_{H^{-1}} \Phi\Big(t_{\theta},(f_{\theta})_{\#} m_{\theta}, f_{\theta}(x) \Big)  \Big) dx\\
    &\qquad + \delta \int_{\R^d} |D m_{\theta}|^2 dx  + \int_{\Omega} H\Bigl( 
\big( Df_{\theta}(x) \big)^T D_x D_{H^{-1}} \Phi\Big((f_{\theta})_{\#} m_{\theta}, f_{\theta}(\cdot) \Big)(x) \Big)  m_{\theta}(x) dx  \\
&\qquad \qquad \leq C \delta \int_{\R^d} \big| m_{\theta} \big|^2 dx.
\end{split}
\end{equation}
%
% \begin{align} \label{theta.subsol}
%    &- \partial_t \Phi_{\theta}\big(t_{\theta}, m_{\theta} \big) + \int_{\R^d} Dm_{\theta} \cdot \Big(  \big( Df_{\theta}(x) \big)^T D_x D_{H^{-1}} \Phi\Big(t_{\theta},(f_{\theta})_{\#} m_{\theta}, f_{\theta}(x) \Big)  \Big) dx\\
%    \nonumber \\
%    &\qquad + \delta \int_{\R^d} |D m_{\theta}|^2 dx  + \int_{\Omega} H\Bigl( 
%\big( Df_{\theta}(x) \big)^T D_x D_{H^{-1}} \Phi\Big((f_{\theta})_{\#} m_{\theta}, f_{\theta}(\cdot) \Big)(x) \Big)  m_{\theta}(x) dx  
%\nonumber \\
%&\qquad \qquad \leq C \delta \int_{\R^d} \big| m_{\theta} \big|^2 dx.
%\end{align}
\textit{Step 4 - Sending $\theta \to 0$.} All limits in this step are as $\theta\to 0$, a fact that we will not be repeating. Again, we start by noting that  the $m_{\theta}$'s  are  bounded in $L^2$ and, hence,  by \eqref{theta.subsol}  also in $H_0^1$. Since Step 1 shows that $m_{\theta} \to m_0$ weakly in $L^2$, we in fact have that%, as $\theta \to 0$, 
\begin{align} \label{mtheta.conv}
t_{\theta} \to t_0 \ \text{and} \   m_{\theta} \to m_0 \ \text{weakly in $H_0^1(\R^d)$ \ and \  strongly in $L^2$}.  
\end{align}
%as $\theta \downarrow 0$. 
We now claim that we also have that
\begin{align} \label{fthetal2}
    (f_{\theta})_{\#} m_{\theta} \to m_0 \ \text{weakly in $L^2(\R^d)$}.
\end{align}
Indeed, using  the fact that the density of $(f_{\theta})_{\#} m_{\theta}$ is $m_{\theta}(f_{\theta}^{-1}(x)) \cdot |J(f_{\theta}^{-1})|$, with $J(f_{\theta}^{-1})$ being the Jacobian of the map $f_{\theta}^{-1}$, we find that 
%Now since $Df_{\theta} \to I_{d \times d}$ uniformly, we see that $J(f_{\theta}^{-1})$ is bounded uniformly in $\theta$, so that $(f_{\theta})_{\#} m_{\theta}$ is uniformly bounded in $L^2$. 
\begin{align*}
  &\| (f_{\theta})_{\#} m_{\theta} \|_2^2  dx = \int_{\R^d} |m_{\theta}\big(f_{\theta}^{-1}(x)\big)|^2 |J(f_{\theta}^{-1})(x)|^2 dx 
  \\
  &\quad = \int_{\R^d} |m_{\theta}(y)|^2 |J(f_{\theta}^{-1})(f_{\theta}(y))| dx \leq \| J(f_{\theta}^{-1})\|_{\infty} \|m_{\theta}\|_2^2. 
\end{align*}
%where we used the fact that the density of $(f_{\theta})_{\#} m_{\theta}$ is $m_{\theta}(f_{\theta}^{-1}(x)) \cdot |J(f_{\theta}^{-1})|$, with $J(f_{\theta}^{-1})$ begin the Jacobian of the map $f_{\theta}^{-1}$. 
Now since $Df_{\theta} \to I_{d \times d}$ uniformly, we see that $J(f_{\theta}^{-1})$ is bounded uniformly in $\theta$, so that the $(f_{\theta})_{\#} m_{\theta}$'s are  uniformly bounded in $L^2$. Since  $m_{\theta} \to m_0$ weakly in $H_0^1(\R^d)$ and $f_{\theta} \to \text{Id}$ uniformly, it is immediate  that in fact every limit point of the $(f_{\theta})_{\#} m_{\theta}$'s with respect to the weak topology on $L^2(\R^d)$ must be equal to $m_0$, which proves \eqref{fthetal2}. 

In light of \eqref{mtheta.conv}, to pass in  the limit in \eqref{theta.subsol}, it suffices to show that
\begin{align} \label{sufficient.strong}
    \big(Df_{\theta}(x) \big)^T 
    D_x D_{H^{-1}} \Phi\Big((f_{\theta})_{\#} m_{\theta}, f_{\theta}\Big) 
     \to  D_x D_{H^{-1}} \Phi\Big(m_0, \cdot \Big) \ \text{strongly in} \  L^2.
\end{align}
Using the triangular inequality we find
\begin{equation}\label{threeterms}
\begin{split} 
    &\norm{\big(Df_{\theta}(x) \big)^T D_x D_{H^{-1}} \Phi\Big((f_{\theta})_{\#} m_{\theta}, f_{\theta}(\cdot) \Big) - D_x D_{H^{-1}} \Phi\Big(m, \cdot \Big)}_{L^2} \\
   &\quad \leq \norm{ \Big(\big(Df_{\theta}(x)  \big)^T - I_{d \times d} \Big) D_x D_{H^{-1}} \Phi\Big((f_{\theta})_{\#} m_{\theta}, f_{\theta}(\cdot) \Big) }_{L^2} \\
  &\qquad + \norm{D_x D_{H^{-1}} \Phi\Big((f_{\theta})_{\#} m_{\theta}, f_{\theta}(\cdot) \Big) - D_x D_{H^{-1}} \Phi\Big(m, f_{\theta}(\cdot) \Big) }_{L^2}\\
&\qquad  + \norm{D_x D_{H^{-1}} \Phi\Big(m, f_{\theta}(\cdot) \Big) - D_x D_{H^{-1}} \Phi\Big(m, \cdot \Big)}_{L^2}.
\end{split}
\end{equation}

The first term  in \eqref{threeterms} converges to $0$ since $Df_{\theta} \to I_{d \times d}$ in $L^{\infty}$. For the second term, we use the fact that, since $(f_{\theta})_{\#}m_{\theta} \to m_0$ weakly in $L^2$ and, hence,  in $H^{-1}$, we have 
\begin{align*}
    D_x D_{H^{-1}} \Phi\big( (f_{\theta})_{\#} m_{\theta}, \cdot \big) \to D_x D_{H^{-1}} \Phi\big(  m_{\theta}, \cdot \big) \ \text{strongly in $L^2$}, 
\end{align*}
and then note that 
\begin{align*}
    \int_{\R^d}& \Big| D_x D_{H^{-1}} \Phi\Big((f_{\theta})_{\#} m_{\theta}, f_{\theta}(x) \Big) - D_x D_{H^{-1}} \Phi\Big(m, f_{\theta}(x) \Big) \Big|^2 dx 
    \\
    &= \int_{\R^d} \Big| D_x D_{H^{-1}} \Phi\Big((f_{\theta})_{\#} m_{\theta}, y \Big) - D_x D_{H^{-1}} \Phi\Big(m, y \Big) \Big|^2 |J(f_{\theta}^{-1}(y)|  dx 
    \\
    &\leq \|J(f^{-1})\|_{\infty} \norm{ D_x D_{H^{-1}} \Phi\Big((f_{\theta})_{\#} m_{\theta}, y \Big) - D_x D_{H^{-1}} \Phi\Big(m, y \Big)}_2^2, 
\end{align*}
which shows that the second term in \eqref{threeterms} tends to zero. 
 
For the third term, we need to show that, for fixed $\phi \in L^2(\R^d)$, we have $\phi \circ f_{\theta} \to f_{\theta}$ in $L^2$. For this, we fix $\eps > 0$, and choose $\phi_{\eps} \in C_c^{\infty}(\R^d)$ such that $\|\phi - \phi_{\eps}\|_2^2 < \eps$. Arguing as above, we have $\|\phi \circ f_{\theta} - \phi_{\eps} \circ f_{\theta}\|_2^2 \leq \|J(f_{\theta})^{-1}\|_{\infty} \eps$,  while  it is straightforward to check that $\phi_{\eps} \circ f_{\theta}  \to \phi_{\eps}$ in $L^2$. It follows that 
\begin{align*}
    \limsup_{\theta \to 0} \| \phi \circ f_{\theta} - \phi\|_2^2 \leq  C \eps +  \limsup_{\theta \to 0} \|\phi - \phi_{\eps}\|_2^2  \leq C \eps, 
\end{align*}
which shows that $\phi \circ f_{\theta} \to \phi$ in $L^2$, and, thus, that   the third term in \eqref{threeterms} converges to zero.

Since  \eqref{sufficient.strong} holds, it  enough to pass to the limit in \eqref{theta.subsol} to obtain
\begin{equation}
\begin{split} \label{subsol}
    &- \partial_t \Phi\big(t_{0}, m_{0} \big) + \int_{\R^d} Dm_{0} \cdot  D_x D_{H^{-1}} \Phi\big(t_{0},m_0, x  \big) dx\\
    &\qquad + \delta \int_{\R^d} |D m_{0}|^2 dx  + \int_{\Omega} H^R \Bigl( x, D_x D_{H^{-1}} \Phi\big(m_{\theta}, x \big) \Big)  m(dx)  \leq C \delta \int_{\R^d} \big| m_{\theta} \big|^2 dx, 
\end{split}
\end{equation}
which completes the proof.  \end{proof}

We end the section with another technical fact that is need for the proof of Theorem~ \ref{thm.comparison}.

\begin{lemma} \label{lem.linearderivcomp}
    Fix $n_0 \in \sub \cap H^{-1}$, and let $\Phi(m) = \frac{1}{2} \|m - n_0\|_{-1}^2$. Then $\Phi \in C^{1,2}(H^{-1})$, and, for $m_0 \in \sub \cap H^{-1}$,
    \begin{align*}
        D_{H^{-1}} \Phi(m_0,x) = f(x), 
    \end{align*}
    where $f = (m_0 - n_0)^* \in H_0^1$ is the dual element of $m_0 - n_0$, and  the unique solution in $H_0^1(\Omega)$ of the PDE 
    \begin{align} \label{dual.pde}
        f - \Delta f = (m_0 - n_0) \text{ in } \Omega, \quad f|_{\partial \Omega} = 0.
    \end{align}
\end{lemma}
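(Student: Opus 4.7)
The plan is to recognize $\Phi$ as the squared distance to $n_0$ in the Hilbert space $H^{-1}$, and then read off its Fréchet derivatives by direct expansion. First I would invoke the Riesz representation theorem (equivalently, Lax–Milgram) to observe that for every $q \in H^{-1}$ there is a unique element $q^{\ast} \in H_0^1(\Omega)$ with
\[
\langle q^{\ast}, \phi \rangle_{H_0^1} \;=\; \langle q, \phi \rangle_{-1,1} \qquad \text{for all } \phi \in H_0^1.
\]
Unpacking the definition of $\langle \cdot,\cdot\rangle_{H_0^1}$ and integrating by parts, this is precisely the weak formulation of $q^{\ast} - \Delta q^{\ast} = q$ in $\Omega$ with $q^{\ast}|_{\partial\Omega}=0$, so the map $q \mapsto q^{\ast}$ is an isometric isomorphism $H^{-1} \to H_0^1$ and, by polarization, the $H^{-1}$ inner product satisfies $\langle p, q\rangle_{H^{-1}} = \langle p^{\ast}, q^{\ast}\rangle_{H_0^1} = \langle q, p^{\ast}\rangle_{-1,1}$.

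Next, I would expand $\Phi$ around a fixed $m \in H^{-1}$: for any increment $q \in H^{-1}$,
\[
\Phi(m+q)-\Phi(m) \;=\; \langle m - n_0, q\rangle_{H^{-1}} + \tfrac{1}{2}\|q\|_{-1}^2 \;=\; \langle q, (m-n_0)^{\ast}\rangle_{-1,1} + \tfrac{1}{2}\|q\|_{-1}^2.
\]
Because the last term is $O(\|q\|_{-1}^2)$, the linear part identifies the Fréchet derivative as $D_{H^{-1}}\Phi(m) = (m-n_0)^{\ast} \in H_0^1$, and specializing to $m=m_0$ gives the claimed $f = (m_0 - n_0)^{\ast}$, which is the unique $H_0^1$ solution of \eqref{dual.pde}. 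The second Fréchet derivative is the constant bilinear form $D^2_{H^{-1}}\Phi(m)(q,r) = \langle q, r\rangle_{H^{-1}}$, which is trivially continuous in $m$; combined with the fact that the affine map $m \mapsto (m-n_0)^{\ast}$ is continuous (indeed isometric up to translation) from $H^{-1}$ into $H_0^1$, this yields $\Phi \in C^{1,2}(H^{-1})$.

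There is no genuine obstacle: the statement is essentially the textbook fact that squared Hilbert norms are smooth, written in coordinates adapted to the Poisson problem. The only point needing a line of justification is the identification of the Riesz dual of $q \in H^{-1}$ with the weak solution of the screened Poisson equation \eqref{dual.pde}, which follows by testing against $\phi \in H_0^1$ and integrating by parts in $\int_\Omega D q^{\ast}\cdot D\phi$.
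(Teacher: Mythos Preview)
Your proposal is correct and follows essentially the same approach as the paper: both identify the Fr\'echet derivative of the squared $H^{-1}$ norm as the Riesz dual element, and both verify via the weak formulation (integration by parts) that this dual element solves the screened Poisson problem \eqref{dual.pde}. The paper's proof is much terser---it simply declares the derivative computation ``standard'' and checks the PDE characterization in one line---whereas you spell out the expansion $\Phi(m+q)-\Phi(m)$ and the second derivative explicitly; but the content is the same.
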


\begin{proof}
   The fact that $D_{H^{-1}} \Phi(m_0,\cdot) = (m_0 - n_0)^*$ is standard. Notice that by integration by parts, the solution $f$ to \eqref{dual.pde} satisfies 
   \begin{align*}
       \langle f, g \rangle_{H_0^1} = \int_{\Omega} g d(m_0 - n_0) = \langle g, m_0 - n_0 \rangle_{1,-1}, 
   \end{align*}
   which by definition means that $f = (m_0 - n_0)^*$.  
   \end{proof}

\section{The comparison principle.} \label{sec.comparison}

We present here the proof of the comparison result.

\begin{proof}[Proof of Theorem \ref{thm.comparison}.]
    We suppose towards a contradiction that
    \begin{align*}
       \sup_{t \in [0,T], \, m \in \sub} \Big\{ V^-(t,m) - V^+(t,m) \Big\} > 0.
    \end{align*}
    Then, we can choose $\lambda > 0$ small enough that 
    \begin{align*}
        M_0 = \sup_{t \in [0,T], \, m \in \sub} \Big\{ V^-(t,m) - V^+(t,m) - \lambda(T-t) \Big\} > 0.
    \end{align*}
    Now, for $\delta > 0$, we consider the optimization problem
    \begin{align} \label{deltaproblem}
       M_{\delta} = \sup_{t \in [0,T], \, m \in \sub} \Big\{ V^-(t,m) - V^+(t,m) - \lambda(T-t) - 2\delta \|m\|_2^2 \Big\}.
    \end{align}
     We claim that $M_{\delta} \to M_0$. Indeed, clearly we have $M_{\delta} \leq M_0$ for each $\delta > 0$. Fix an optimizer $(t_0,m_0)$ in problem defining $M_0$. Then, define $m_{\kappa} = \big( m_0 * \rho_{\kappa} \big)|_{\Omega}$. It is straightforward to check that $m_{\kappa} \to m$ (with respect to $\bd$) as $\kappa \to 0$, and thus for any $\eta > 0$, we can choose $\kappa$ small enough that
    \begin{align*}
        M_0 \leq V^-(t_0,m_{\kappa}) - V^+(t_0,m_{\kappa}) - \lambda(T - t_0) + \eta, 
    \end{align*}
    and hence 
    \begin{align*}
        \limsup_{\delta \to 0} M_{\delta} &\geq \limsup_{\delta \to 0} \Big( V^-(t_0,m_{\kappa}) - V^+(t_0,m_{\kappa}) - \lambda(T - t_0) - 2 \delta \|m_{\kappa}\|_2^2 \Big) 
        \\
        &= V^-(t_0,m_{\kappa}) - V^+(t_0,m_{\kappa}) - \lambda(T - t_0) \geq M_0 - \eta.
    \end{align*}
    Thus we indeed have $M_{\delta} \to M_0$,  and so we can choose $\delta_0$ small enough so that 
    \begin{align} \label{mdelta.bound}
        M_{\delta} \geq M_0/2 > 0 \text{ for all } 0 < \delta < \delta_0.
    \end{align}
    We now double variables, and introduce the optimization problem
    \begin{align} \label{epsdeltaproblem}
     M_{\delta, \eps} = \sup_{t,s \in [0,T], \, m,n \in \sub \cap L^2(\Omega)} &\Big\{ V^-(t,m) - V^+(s,n) - \frac{1}{\eps} \Big( |t-s|^2 + \|m-n\|_{H^{-1}}^2 \Big) 
      \nonumber  \\
       &\qquad - \delta \|m\|_2^2 - \delta \|n\|_2^2 - \lambda(T-t) \Big\}.
    \end{align}
    To keep the argument clear, we divide the proof into separate steps. 
    \newline \newline 
    \textit{Step 1 - convergence of the optimizers.} The optimization problems   \eqref{deltaproblem} and \eqref{epsdeltaproblem} admit at least one optimizer $(t_{\delta}, m_{\delta})$ and $(t_{\delta, \eps}, s_{\delta, \eps}, m_{\delta, \eps}, n_{\delta, \eps})$ respectively.  This follows from the facts that any optimizing sequence is bounded in $L^2$ and the maps 
        \[\sub \cap L^2 \ni (t,m) \mapsto V^-(t,m) - V^+(t,m)  - 2\delta \|m\|_2^2  \]
  and
     \[\sub \cap L^2 \times \sub \cap L^2 \mapsto V^-(t,m) - V^+(t,m) - \frac{1}{\eps}  \|m-n\|_{H^{-1}}^2  - \delta \|m\|_2^2 - \delta \|n\|_2^2\]
 are upper semi-continuous with respect to $\bd$ and hence with respect to the weak topology on $L^2$.
    
    Since  
$        V^-(T,\cdot) \leq G \leq V^+(T,\cdot),$
    and, by assumption,  $M_{\delta} > 0$ for $\delta < \delta_0$, we deduce that, for each $\delta < \delta_0$, any optimizer $(t_{\delta},m_{\delta})$ of  \eqref{deltaproblem} must satisfy 
    \begin{align}
        t_{\delta} < T. 
    \end{align}
%%    A similar argument  shows that, for each $\delta, \eps > 0$, there is at least one optimizer for  \eqref{epsdeltaproblem}.
%%    
%     Let $(t_{\delta, \eps}, s_{\delta, \eps}, m_{\delta, \eps}, n_{\delta, \eps})$ be any optimizer for  \eqref{epsdeltaproblem},  
%     %which exists following a similar argument, 
%     and recall  that the $m_{\delta,\eps}$'s and $n_{\delta,\eps}$'s are bounded in $L^2$, independently of $\eps$.
%      
      Following standard arguments, one can check that if $(t_{\eps,\delta},s_{\delta,\eps}, m_{\delta, \eps}, n_{\delta,\eps})$ is any optimizer of \eqref{epsdeltaproblem}, then $m_{\delta, \eps}$ and $n_{\delta, \eps}$ are bounded in $L^2$ uniformly in $\eps$, and if $(t_{\delta}, s_{\delta}, m_{\delta}, n_{\delta})$ is any weak-$L^2$ limit point as $\eps \to 0$, then $t_{\delta} = s_{\delta}$, $m_{\delta} = n_{\delta}$, and $(t_{\delta}, m_{\delta})$ is an optimizer for \eqref{deltaproblem}. %,  of the optimizers of   \eqref{epsdeltaproblem}, which exists since the $m_{\delta, \eps}$'s and $n_{\delta, \eps}$ are bounded  independently of $\eps$ in $L^2$.
 %     (t_{\delta, \eps}, s_{\delta, \eps}, m_{\delta, \eps}, n_{\delta, \eps})$'s are bounded in $L^2$  independently of $\eps$.
%      
% Notice that,  for each fixed $\delta > 0$ and as $\eps\to 0$,
  %$M_{\delta, \eps} \to  M_{\delta}, and as a consequence
%   \begin{equation}\label{epstozero}
%    \begin{split} 
 %  \begin{align} \label{epstozero}
 %  M_{\delta, \eps} \to  M_{\delta}  \ \text{and}\  \frac{1}{\eps} \Big(|t_{\delta, \eps} - s_{\delta, \eps}|^2 + \|m_{\delta, \eps}-n_{\delta, \eps}\|_{H^{-1}}^2 \Big) \to 0.
 %  \end{align}
    
%   & \frac{1}{\eps} \Big(|t_{\delta, \eps} - s_{\delta, \eps}|^2 + \|m_{\delta, \eps}-n_{\delta, \eps}\|_{H^{-1}}^2 \Big) \xrightarrow{\eps \downarrow 0} 0. 
%    \end{split}
%    \end{equation}
%    It follows that we must have 
%    \begin{align*}
%        t_{\delta} = s_{\delta}, \quad m_{\delta} = n_{\delta}.
%    \end{align*}
 %   Then using the upper semi-continuity of $(t,s,m,n) \mapsto V^+(t,m) - V^-(s,n)$ with respect to the weak topology and the continuity of $(m,n) \mapsto \|m-n\|_{-1}^2$ with respect to the strong topology on $H^{-1}$ and, hence, the weak topology on $L^2$, we deduce that $(t_{\delta}, m_{\delta})$ is a optimizer for \eqref{deltaproblem}. 

  %  
       If $\delta < \delta_0$, we must therefore have $t_{\delta} < T$, and so we conclude that for each each $\delta < \delta_0$, there exists $\eps_0 = \eps_0(\delta) > 0$ such that for $\eps < \eps_0$ and for any optimizer $(t_{\delta,\eps}, s_{\delta, \eps}, m_{\delta, \eps}, n_{\delta, \eps})$ for \eqref{epsdeltaproblem}, we have
    \begin{align} \label{noterminaltime}
        t_{\delta, \eps} < T, \quad s_{\delta, \eps} < T.
    \end{align}
    \textit{Step 2 - using the equation.} We now fix $\delta < \delta_0$ and $\eps<\eps_0$ so that any optimizer $(t_{\delta,\eps}, s_{\delta, \eps}, m_{\delta, \eps}, n_{\delta, \eps})$ for \eqref{epsdeltaproblem} satisfies \eqref{noterminaltime}. 
    
    We apply Lemma \ref{lem.linearderivcomp}
    and the definition of viscosity subsolution to deduce that $m_{\delta, \eps}, n_{\delta, \eps} \in H_0^1$, and there is a constant $C$ which is independent of $\eps,\delta$ such that
    \begin{align*}
        &\lambda - \frac{(t_{\delta,\eps} - s_{\delta,\eps})}{\eps} + \delta \int_{\Omega} |Dm_{\delta, \eps}|^2 dx + \int_{\Omega} D_x f_{\delta,\eps} \cdot Dm_{\delta,\eps} dx  + \int_{\Omega} H \big( x, D_x f_{\delta,\eps} \big) m_{\delta,\eps}(x) dx 
        \\
        &\qquad \qquad \leq F(m_{\delta,\eps}) + C \delta \|m_{\delta,\eps}\|_2^2, 
    \end{align*}
    and, likewise,
    \begin{align*}
        &- \frac{(t_{\delta,\eps} - s_{\delta,\eps})}{\eps} - \delta \int_{\Omega} |Dn_{\delta, \eps}|^2 dx + \int_{\Omega}  D_x f_{\delta,\eps} \cdot Dm_{\delta,\eps} dx + \int_{\Omega} H \big( x, D_x f_{\delta,\eps} \big) n_{\delta,\eps}(x) dx 
        \\
        &\qquad \qquad \geq F(m_{\delta,\eps}) - C \delta \|n_{\delta,\eps}\|_2^2, 
    \end{align*} 
    where $f_{\delta,\eps} \in H_0^1$ is the unique solution of 
    \begin{align*}
        f_{\delta,\eps} - \Delta f_{\delta,\eps} = \frac{m_{\delta,\eps} - n_{\delta,\eps}}{\eps} \ \text{in} \ \Omega \ \text{and} \  f_{\delta,\eps}=0 \ \text{on} \ \partial \Omega. 
    \end{align*}
    Subtracting these two inequalities, we deduce that 
    \begin{align*}
        \lambda + \delta \int_{\Omega} \big(|Dm_{\delta, \eps}|^2 + |Dn_{\delta, \eps}|^2 \big) dx \leq I + II + III + IV, 
    \end{align*}
    where 
    \begin{align*}
        &I = -\int_{\Omega} D_x f_{\delta,\eps} \cdot D \big(m_{\delta,\eps} - n_{\eps, \delta} \big) dx, \qquad II = -\int_{\Omega} H\Big( x,D_x f_{\eps, \delta} \Big)(m_{\delta,\eps} - n_{\delta,\eps})(x) dx, 
        \\
        &III = F(m_{\eps, \delta}) - F(n_{\delta,\eps}), \qquad  \,\, \qquad  \qquad IV = C\delta \big(\|m_{\delta,\eps}\|_2^2 + \|n_{\delta,\eps}\|_2^2\big).
    \end{align*}
    Next, notice that 
    \begin{align*}
        I = \int_{\Omega} \Delta_x & f_{\delta,\eps} (m_{\delta,\eps} - n_{\delta,\eps}) dx = \int_{\Omega} f_{\delta,\eps} (m_{\delta,\eps} - n_{\delta,\eps}) dx - \frac{1}{\eps} \|m_{\delta,\eps}-n_{\delta,\eps}\|_2^2
        \\
        &\leq \|f_{\delta,\eps}\|_{H_0^1} \|m_{\delta,\eps} - n_{\delta,\eps}\|_{H^{-1}} - \frac{1}{\eps} \|m_{\delta,\eps} - n_{\delta,\eps}\|_2^2
        \\
        &\leq \frac{C}{\eps} \norm{m_{\delta,\eps}-  n_{\delta,\eps}}_{H^{-1}}^2- \frac{1}{\eps} \|m_{\delta,\eps} - n_{\delta,\eps}\|_2^2,
    \end{align*}
    while, by the linear growth of $H$,
    \begin{align*}
        II &\leq \int_{\Omega} C\big(1 + |D_x f_{\delta,\eps}| \big) |m_{\delta,\eps} - n_{\delta,\eps} | dx \leq C\big(1 + \|f_{\delta,\eps}\|_{H_0^1} \big) \|m_{\delta,\eps} - n_{\delta,\eps}\|_2
        \\
        &\leq \frac{1}{2\eps} \|m_{\delta,\eps} - n_{\delta,\eps}\|_2^2 + C \eps \Big(1 + \|f_{\delta,\eps}\|_{H_0^1}^2 \Big) \leq \frac{1}{2\eps} \|m_{\delta,\eps} - n_{\delta,\eps}\|_2^2 + C \Big(\eps + \frac{1}{\eps} \|m_{\delta,\eps} - n_{\eps, \delta}\|_{H^{-1}}^2\Big).
    \end{align*}
    Since $III \leq C d(m_{\delta,\eps}, n_{\delta,\eps}) \leq C \|m_{\delta,\eps} - n_{\delta,\eps}\|_{H^{-1}}$, combining the bounds above we get 
    \begin{equation}\label{lambdabound}
    \begin{split} 
        \lambda +& \delta \int_{\Omega} \big(|Dm_{\delta, \eps}|^2 + |Dn_{\delta, \eps}|^2 \big) dx \leq  C \bigg( \frac{1}{\eps} \norm{m_{\delta,\eps}- n_{\delta,\eps}}_{H^{-1}}^2 
       +  \eps + \|m_{\eps, \delta} 
        - n_{\delta,\eps}\|_{H^{-1}}
        + \delta \big(\|m_{\delta,\eps}\|_2^2 + \|n_{\delta,\eps}\|_2^2  \big) \bigg) 
    \end{split}
    \end{equation}
    Notice that, for fixed $\delta$, the $m_{\delta,\eps}$'s and $n_{\delta, \eps}$'s are bounded in $L^2$ independently of $\eps$, and so, in view of  \eqref{lambdabound}, they are in fact bounded in $H_0^1$ independently of $\eps$. Arguing as in Step 1, we can thus find a sequence $\eps_k \to 0$ such that, as $k\to \infty$, 
    \begin{align*}
       t_{\delta, \eps_k}, s_{\delta, \eps_k} \to t_{\delta}, \quad m_{\delta_k, \eps_k}, n_{\delta, \eps_k} \to m_{\delta}
     \ \text{weakly in $H_0^1$ \ and \ strongly in $L^2$},
    \end{align*}
where $(t_{\delta}, m_{\delta})$ is an optimizer for the problem \eqref{deltaproblem}.

    Note that one can easily check that \begin{align} \label{epstozero}
   M_{\delta, \eps} \to  M_{\delta}  \ \text{and}\  \frac{1}{\eps} \Big(|t_{\delta, \eps} - s_{\delta, \eps}|^2 + \|m_{\delta, \eps}-n_{\delta, \eps}\|_{H^{-1}}^2 \Big) \to 0.
   \end{align}
    Combined with \eqref{lambdabound}, this shows that, for each $\delta > 0$, 
    \begin{align*}
         \lambda \leq C \delta \|m_{\delta}\|_2^2. 
    \end{align*}
    But it is easy to check that, as $\delta \to 0$,  $M_{\delta} \to  M_0$, and, hence,  $\delta \|m_{\delta}\|_2^2 \to 0$. We thus find that $\lambda \leq 0$, which is a contradiction. This completes the proof.  
    \end{proof}

\section{The properties of $V^{N,K}$.} \label{sec.vnk}

We prove  in this section the Lipschitz continuity of the $V^{N,K}$'s and then study the limit as $N \to \infty$. To make the arguments more readable we split the 
discussion into two subsections.

\subsection{Uniform in $N$ Lipschitz bounds.} \label{subsec.vnkLip}

To extract a ``limit point" of the sequence  $(V^{N,K})_{K= 1,...,N}$,  we  view them  %$(V^{N,K})_{K= 1,...,N}$ 
as maps on $[0,T] \times \sub^N$, where 
\begin{align} \label{def.pnsub}
    \sub^N \coloneqq \big\{ m_{\bx}^{N,K} : K =1,...,N, \, \bx \in \Omega^K \big\}.
\end{align}
The  goal is  to show that the functions
\begin{align} \label{def.tildev}
  \wt{V}^{N,K} : [0,T] \times \sub^N \to \R, \quad \wt{V}^{N,K}(t,m_{\bx}^{N,K}) =   V^{N,K}(t,\bx),
\end{align}
which are well-defined in view of the symmetry of $(V^{N,K})_{K = 1,...,N}$,
satisfy appropriate estimates, uniformly in $N$. This is the subject of the following Theorem.

\begin{theorem} \label{thm.vnk.lip}
There is a constant $C$ such that, for each $N \in \N$, $t,s \in [0,T]$, $K,M \in \{1,...,N\}$, and $\bx \in \Omega^K$, $\by \in \Omega^M$, %we have
\begin{align*}
   |V^{N,K}(t,\bx) - V^{N,M}(s,\by)| \leq C \Big(|t-s|^{1/2} + \bd\big( m_{\bx}^{N,K},m_{\by}^{N,M} \big) \Big).
\end{align*}
\end{theorem}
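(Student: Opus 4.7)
The plan is to separate the estimate into time and spatial pieces. For the time regularity $|V^{N,K}(t,\bx) - V^{N,K}(s,\bx)| \le C|t-s|^{1/2}$ I would use the dynamic programming principle together with the suboptimal choice $\alpha \equiv 0$: then $\bX$ is a collection of $K \le N$ independent Brownian motions killed at $\partial\Omega$, and since the running and terminal costs carry a $\frac{1}{N}$ prefactor that compensates the sum over $K$ particles, a standard It\^o/martingale computation yields the square-root bound with constants depending only on the Lipschitz data of $L, F, G$ and on $\mathrm{diam}(\Omega)$. The real content is the spatial bound, which is what the rest of the plan addresses.

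\textbf{Boundary estimate via barriers.} Before doubling variables, one needs a quantitative version of the compatibility condition \eqref{compatcond}: as a coordinate $x^i$ approaches $\partial\Omega$, how close is $V^{N,K}(t,\bx)$ to $V^{N,K-1}(t,\bx^{-i})$? Since $m^{N,K}_{\bx} - m^{N,K-1}_{\bx^{-i}} = \frac{1}{N}\delta_{x^i}$ and the test functions defining $\bd$ vanish on $\partial\Omega$ and are $1$-Lipschitz, one has $\bd(m^{N,K}_{\bx}, m^{N,K-1}_{\bx^{-i}}) \le \tfrac{1}{N}\, d(x^i,\partial\Omega)$, so the target estimate is of order $\tfrac{1}{N} d(x^i,\partial\Omega)$. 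I would construct barriers of the form $V^{N,K-1}(t,\bx^{-i}) \pm \psi^{N,i}(\bx)$ where $\psi^{N,i}$ vanishes on the $i$-th face, is bounded by $\tfrac{C}{N}\,d(x^i,\partial\Omega)$, and has gradient $D_{x^i}\psi^{N,i}$ of size $O(1/N)$, so that the term $\tfrac{1}{N}H(x^i, ND_{x^i}\psi^{N,i})$ in \eqref{hjbnk} remains of order $1/N$. Substitution into \eqref{hjbnk}, using \eqref{assump.main1}, shows that for a suitable choice these are genuine super- and subsolutions of \eqref{hjbnk} with the correct boundary value on the $i$-th face, and the comparison principle applied inductively along the hierarchy yields the $N$-uniform estimate
\begin{align*}
|V^{N,K}(t,\bx) - V^{N,K-1}(t,\bx^{-i})| \le \tfrac{C}{N}\, d(x^i,\partial\Omega).
\end{align*}

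\textbf{Simultaneous doubling of variables.} With the boundary estimate in hand, I would run a single doubling argument treating all the $V^{N,K}$'s at once. Fix $N$ and a large constant $C_0$, and consider
\begin{align*}
\Lambda \;=\; \max_{K,M \in \{1,\dots,N\}}\; \sup_{\substack{t\in[0,T]\\ \bx\in\ov{\Omega}^K,\; \by\in\ov{\Omega}^M}} \Big\{ V^{N,K}(t,\bx) - V^{N,M}(t,\by) - C_0\, \bd\bigl(m^{N,K}_{\bx}, m^{N,M}_{\by}\bigr) \Big\},
\end{align*}
augmented by a small $-\eta(T-t)$ penalization so as to attain a maximizer $(K_*,M_*,t_*,\bx_*,\by_*)$. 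Three cases arise. If $t_* = T$, the terminal condition $V^{N,K}(T,\cdot) = G(m^{N,K}_{\cdot})$ and the $\bd$-Lipschitz continuity of $G$ force $\Lambda \le 0$ once $C_0 > \mathrm{Lip}(G)$. If some coordinate of $\bx_*$ or $\by_*$ lies on $\partial\Omega$, the boundary estimate above lets us drop that coordinate at a cost compatible with the $C_0\,\bd$ term, reducing $(K_*,M_*)$ by one; iterating finitely often, we may assume the maximizer is interior. In the interior case, the equations \eqref{hjbnk} hold at both $(t_*,\bx_*)$ and $(t_*,\by_*)$ and can be combined after testing against (a smoothing of) the coupling realizing $\bd$. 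The structural point is that $\bd$ is built from $1$-Lipschitz boundary-vanishing test functions, which, together with the $\tfrac{1}{N}$ prefactor in \eqref{hjbnk} and the factor $N$ inside $H$, produces Hamiltonian error terms of size $O(C_0)$ uniformly in $N$, $K$, $M$. Choosing $C_0$ large in terms of the data absorbs these errors and yields $\Lambda \le 0$, completing the spatial bound.

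\textbf{Main obstacle.} The principal difficulty is the $N$-scaling of the barriers $\psi^{N,i}$: their gradient in $x^i$ must be $O(1/N)$ to survive the multiplier $N$ inside $H$ in \eqref{hjbnk}, yet they must still dominate $V^{N,K} - V^{N,K-1}$ up to $\tfrac{1}{N} d(x^i,\partial\Omega)$. Once the barriers are constructed, the simultaneous doubling is delicate only in the bookkeeping needed to handle the repeated reductions from $(K,M)$ to $(K-1,M)$ or $(K,M-1)$ without accumulating $N$-dependent constants, and in smoothing the non-differentiable penalization $\bd$ enough to legitimately plug it into \eqref{hjbnk}.
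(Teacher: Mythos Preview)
Your overall architecture---barriers near $\partial\Omega$, a doubling argument for the spatial bound, and dynamic programming/It\^o for the time bound---matches the paper's, but the doubling step as you describe it has a real gap. After reducing to an interior maximizer of $\Lambda$ one may well have $K_*\ne M_*$, and then ``combining the equations after testing against a smoothing of the coupling realizing $\bd$'' does not produce the cancellation you claim. If you freeze the Kantorovich potential $\phi^*$, the penalization is linear in $(\bx,\by)$ and the Hamiltonian contribution becomes $\int_\Omega H(\cdot,C_0D\phi^*(\cdot))\,d(m^{N,K_*}_{\bx_*}-m^{N,M_*}_{\by_*})$; since $x\mapsto H(x,C_0D\phi^*(x))$ does not vanish on $\partial\Omega$, this term is $O(1)$ rather than $O(\bd)$, and taking $C_0$ large cannot absorb it. If instead you use the optimal transport plan, the $|K_*-M_*|$ unpaired particles each produce an uncancelled Hamiltonian term of size $O(1/N)$, again not controlled by $\bd$. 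Your barrier step also omits a preliminary crude bound $|V^{N,K}-V^{N,K-1}|\le C/N$ (Lemma~\ref{lem.crudebound}), which is what supplies the comparison data on the inner boundary $\partial^+\mathcal N_\eps$ of the tubular neighborhood.

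The paper resolves this by separating the spatial bound into two stages. First (Proposition~\ref{prop.DiVNK}) it doubles only within a fixed level $K$, with the concrete smooth penalization $\tfrac{\lambda(t)}{N}\sum_i(|x^i-y^i|^2+\eps^2)^{1/2}$; the barrier estimate is used only to show the maximizer is interior, and the Hamiltonian terms then cancel coordinate by coordinate because $ND_{x^i}V^{N,K}=ND_{y^i}V^{N,K}$ at the optimum. The output is $|D_{x^i}V^{N,K}|\le C/N$. Second, the paper introduces the pseudometric $\rho(x,y)=\sup_{\phi\in E}(\phi(x)-\phi(y))\asymp\min\{|x-y|,d_{\partial\Omega}(x)+d_{\partial\Omega}(y)\}$ on $\ov\Omega/{\sim}$, identifies $\bd$ with the associated Monge--Kantorovich distance (Lemmas~\ref{lem.rho}--\ref{lem.drho.disctete}), and combines the coordinate-wise Lipschitz bound with the barrier estimate one coordinate at a time to obtain $|V^{N,N}(t,\bx)-V^{N,N}(t,\by)|\le C\bd_\rho(m^N_{\bx},m^N_{\by})$ (Proposition~\ref{prop.lip}); the case $K\ne M$ then follows for free from the padding identity $V^{N,K}(t,\bx)=V^{N,N}(t,\bx,z,\dots,z)$ with $z\in\partial\Omega$. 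Finally, the time regularity uses this spatial bound through $|ND_{x^i}V^{N,N}|\le C$ in an It\^o expansion; your ``$\alpha\equiv 0$'' argument supplies only one of the two required inequalities.
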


Before presenting  the proof of Theorem~\ref{thm.vnk.lip}, we need a number of preliminary facts and estimates which we formulate as separate  lemmata.

\begin{lemma} \label{lem.crudebound}
    There is a constant $C$ which is independent of $N$ such that, for each $N$, $K=1,\ldots, N$  and  $\bx \in \Omega^K$, 
    \begin{align*}
       | V^{N,K}(t,\bx) - V^{N,K-1}(t,\bx^{-i}) | \leq \frac{C}{N}, \quad \text{where $\bx^{-i} = (x^1,...,x^{i-1},x^{i+1},...,x^K) \in \Omega^{K-1}$.}
    \end{align*}
\end{lemma}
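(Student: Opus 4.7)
The plan is to prove both directions of the inequality separately, in each case by transferring an $\eps$-optimal control from one problem to the other and showing that the induced change in cost is $O(1/N)$. The argument rests on two elementary observations: first, the $K$- and $(K-1)$-particle empirical measures differ by a single atom of mass $1/N$, which gives $\bd(m_s^{N,K}, m_s^{N,K-1}) \leq C/N$ uniformly in $s$; second, the Lagrangian $L$ is bounded at $a = 0$ and bounded below uniformly on $\Omega \times \R^d$. The case $K = 1$ is covered by the same reasoning once one interprets $V^{N,0}$ via the convention \eqref{hjbnk0}.

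For the inequality $V^{N,K}(t,\bx) \leq V^{N,K-1}(t,\bx^{-i}) + C/N$, I would fix $\eps > 0$ and an $\eps$-optimal control $(\alpha^j)_{j \neq i}$ for $V^{N,K-1}(t,\bx^{-i})$, and extend it to an admissible control for the $K$-particle problem by setting $\alpha^i \equiv 0$, so that $X^i$ simply performs a scaled Brownian motion from $x^i$ until absorption. The trajectories $(X^j)_{j \neq i}$ are unchanged, and the two empirical measures satisfy $m_s^{N,K} - m_s^{N,K-1} = \tfrac{1}{N}\delta_{X_s^i} 1_{s < \tau^i}$. Testing this against any $\phi \in E$ yields
\[ \Bigl|\int \phi \, d(m_s^{N,K} - m_s^{N,K-1}) \Bigr| \leq \tfrac{1}{N}|\phi(X_s^i)| \leq \tfrac{1}{N}\dist(X_s^i, \partial \Omega) \leq \tfrac{C}{N}, \]
since $\Omega$ is bounded, so that $\bd(m_s^{N,K}, m_s^{N,K-1}) \leq C/N$. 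The Lipschitz continuity of $F$ and $G$ on $(\sub, \bd)$ from \eqref{assump.main} then produces an $O(1/N)$ contribution from the running and terminal penalties, and the only new Lagrangian term is $\tfrac{1}{N}\E\int_t^T L(X_s^i, 0) 1_{s < \tau^i}\,ds$, which is $O(1/N)$ because $|L(\cdot, 0)|$ is uniformly bounded on $\ov{\Omega}$ by the local Lipschitz assumption in \eqref{assump.main}.

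For the reverse inequality, I would start from an $\eps$-optimal control $(\alpha^1, \ldots, \alpha^K)$ for $V^{N,K}(t,\bx)$ and restrict it to $(\alpha^j)_{j \neq i}$ as an admissible control for the $(K-1)$-particle problem. The $F$ and $G$ contributions are controlled by the same $\bd \leq C/N$ estimate, and the discarded Lagrangian term $\tfrac{1}{N}\E\int_t^T L(X_s^i, \alpha_s^i) 1_{s < \tau^i}\,ds$ is bounded below by $-C/N$ as soon as one has a uniform lower bound $L(x,a) \geq -C$. This is where \eqref{assump.main1} enters: since $H(x,0) = \sup_a(-L(x,a)) = -\inf_a L(x,a)$ and $|H(x,0)| \leq C$, Legendre--Fenchel duality gives $L(x,a) \geq -C$ uniformly in $(x,a) \in \Omega \times \R^d$. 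There is no real obstacle; this is a transparent exchange-of-controls estimate, and the only step that deserves explicit mention is the extraction of this uniform lower bound on $L$ from the growth bound on $H$.
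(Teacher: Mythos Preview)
Your argument is correct in spirit and essentially complete, but there is one technical point you gloss over. In the reverse direction you take an $\eps$-optimal control $(\alpha^1,\ldots,\alpha^K)$ for the $K$-particle problem and declare $(\alpha^j)_{j\neq i}$ admissible for the $(K-1)$-particle problem. However, the $\alpha^j$ are adapted to the filtration generated by all $K$ Brownian motions, whereas the $(K-1)$-particle problem, as set up in the paper, uses only the filtration of $(W^j)_{j\neq i}$. The fix is routine: the value function of a stochastic control problem is unchanged when the filtration is enlarged by an independent source of randomness (here $W^i$), which one sees either by the PDE characterization of $V^{N,K-1}$ or by a direct conditioning argument. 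You should flag this step.

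Apart from that, your proof takes a genuinely different route from the paper's. The paper argues by PDE comparison: it observes that $\hat V^{N,K-1}(t,\bx):=V^{N,K-1}(t,\bx^{-i})$ satisfies the same HJB equation as $V^{N,K}$ up to the source error $\tfrac{1}{N}H(x^i,0)$ and an $O(1/N)$ shift in the terminal datum, then concludes by the comparison principle (with an implicit induction on $K$ to handle the lateral boundary). Your approach is purely control-theoretic and does not invoke the HJB hierarchy at all; it makes the role of the Lipschitz bounds on $F,G$ and the growth/lower bounds on $L,H$ fully explicit. The paper's argument is more concise and sidesteps the filtration issue automatically; yours is more elementary and would work even before one has verified that the $V^{N,K}$ solve \eqref{hjbnk}.
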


\begin{proof}
For $K = 1,...,N$, and $i \in \{1,...,K\}$, we introduce the function  $\hat{V}^{N,K-1}: [0,T] \times \Omega^K \to \R$ given by $\hat{V}^{N,K-1}(t,\bx) = V^{N,K-1}(t,\bx^{-i})$,
which solves the PDE 
\begin{align*}
    \begin{cases}
  \ds       - \partial_t \hat{V}^{N,K-1} - \sum_{i = 1}^K \Delta_{x^i} \hat{V}^{N,K-1} + \frac{1}{N} \sum_{i 
= 1}^K H(x^i, N D_{x^i} \hat{V}^{N,K-1}) 
\\ \ds 
\qquad \qquad\qquad= F(m_{\bx^{-i}}^{N,K-1}) + \frac{1}{N} H(x^i, 0) \ \text{in} \ [0,T] \times (\T^d)^K,   \vspace{.2cm}
\\
\ds \hat{V}^{N,K-1}(T,\bx) = G(m_{\bx^{-i}}^{N,K-1}) \ \text{and} \  \hat{V}^{N,K-1} = V^{N,K-1} \ \text{  on } \ \partial(\Omega^K).
    \end{cases}
\end{align*}
Note that, if $K = 1$ in the formula above, we have  $\hat{V}^{N,0}(t,x) = V^{N,0}(t) = G(\bzero) + (T-t) F(\bzero)$. Since $H(x^i,0)$ is bounded and
\begin{align*}
    |G(m_{\bx^{-i}}^{N,K-1}) - G(m_{\bx}^{N,K})| \leq C \bd\big(m_{\bx^{-i}}^{N,K-1}, m_{\bx}^{N,K} \big) \leq C/N,  
\end{align*}
we conclude by the comparison principle.  \end{proof}

We next need to address the Lipschitz regularity of $V^{N,K}$ near the lateral boundary of its domain.  We recall from Section \ref{sec.prelim} that $d_{\partial \Omega}$ is the distance to the boundary, $\cN_{\eps}$ is an open collar around $\partial \Omega$,  that we can choose $\eps_0 > 0$ small enough so that, for each $0 < \eps < \eps_0$, $d_{\partial \Omega}$ is smooth on $\cN_{\eps}$ and satisfies $|D d_{\Omega}| = 1$. 
It is also clear that for $\eps$ small enough, $\cN_{\eps}$ is an open set with a $C^2$ boundary of the form
\begin{align*}
    \partial \cN_{\eps} = \partial \Omega \sqcup \partial^+ \cN_{\eps}, \quad \partial^+ \cN_{\eps} = \{ x \in \Omega : d_{\partial \Omega}(x) = \eps \}. 
\end{align*}

\begin{lemma} \label{lem.barrier}
    For any constants $C> 0$, there exists   $\eps \in (0,\eps_0)$ and smooth functions $\phi^+$, $\phi^- : \ov{\cN_{\eps}} \to \R$ such that 
\begin{equation}  \begin{split}
& \phi^+ = \phi^- = 0 \ \text{ on } \ \partial \Omega, \\[1.2mm]
& \Delta \phi^+ \leq -C \big(1 + |D \phi^+|^2 \big) \ \  \text{and} \ \ \Delta \phi^- \geq C \big(1 + |D \phi^-|^2\big) \ \text{ on} \ \cN_{\eps},\\[1.2mm] 
%& \Delta \phi^- \geq C_1 \big(1 + |D \phi^-|^2\big) \ \text{on}  \ \cN_{\eps},\\[1.2mm] 
& \phi^+ \geq C \ \text{and} \ \phi^- \leq -C \ \text{on} \  \partial^+ \cN_{\eps},\\[1.2mm]
& \phi^+ \geq C d_{\partial \Omega},  \ \phi^- \leq - C d_{\partial \Omega} \ \text{on}  \ \cN_{\eps}. 
 \end{split}
\end{equation}

 %   \end{enumerate}
\end{lemma}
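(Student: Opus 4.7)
The plan is to construct both barriers from a single scalar ansatz in the signed distance to the boundary. Let $d(x) := d_{\partial\Omega}(x)$; by the choice of $\epsilon_0$, $d$ is smooth on $\overline{\cN_{\epsilon_0}}$ with $|Dd| \equiv 1$, and a purely geometric constant $M$ bounds $|\Delta d|$ on $\cN_{\epsilon_0}$. I will take
\[
\phi^+(x) = \alpha\, f(d(x)), \qquad \phi^-(x) = -\phi^+(x),
\]
with $f : [0,\eps] \to \R$ a smooth concave profile satisfying $f(0) = 0$ and $f(\eps) = 1$, and constants $\alpha > 0$, $\eps \in (0,\epsilon_0)$ to be tuned in terms of $C$ and $M$. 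Since the inequality required of $\phi^-$ is literally the negation of the inequality required of $\phi^+$, the ansatz $\phi^- = -\phi^+$ reduces the proof to finding $\phi^+$.

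By the chain rule, $|D\phi^+|^2 = \alpha^2 f'(d)^2$ and $\Delta\phi^+ = \alpha f''(d) + \alpha f'(d)\Delta d$, so the subsolution-type inequality $\Delta\phi^+ + C|D\phi^+|^2 + C \leq 0$ becomes
\[
\alpha f''(d) + \alpha f'(d)\Delta d + C\alpha^2 f'(d)^2 + C \leq 0.
\]
The crux is to engineer $f$ so that $|f''|$ beats $(f')^2$ by a tunably large factor. I propose the logarithmic profile $f(s) = \log(1+\beta s)/L$ with $L := \log(1+\beta\eps)$, for which a direct computation yields the identity $f''(s) = -L\,(f'(s))^2$. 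Setting $\alpha = C$ and $L = 2C^2$ (i.e.\ $\beta\eps = e^{2C^2}-1$) then collapses the inequality to
\[
-C^3 f'(d)^2 + C f'(d)\Delta d + C \leq 0.
\]
Since $f'(s) = \beta/((1+\beta s)L)$ attains its minimum on $[0,\eps]$ at $s = \eps$, with $f'(\eps) = (1-e^{-2C^2})/(2C^2\eps) \gtrsim 1/\eps$, the quadratic term $-C^3 f'(d)^2 \lesssim -C^3/\eps^2$ dominates both the linear term $CM f'(d) \lesssim CM/\eps$ and the constant $C$, once $\eps$ is chosen small enough depending on $C$ and $M$. A Young-type splitting of the linear term against the quadratic one makes this precise.

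The remaining conditions are essentially built into the ansatz. The boundary value $\phi^+ = 0$ on $\partial\Omega$ follows from $d = 0$ and $f(0) = 0$; on $\partial^+\cN_{\eps}$ one has $d = \eps$ and hence $\phi^+ = \alpha f(\eps) = C$; and the concavity of $f$ with $f(0) = 0, f(\eps) = 1$ gives $f(d) \geq d/\eps$, so $\phi^+(x) \geq Cd/\eps \geq C d_{\partial\Omega}(x)$ provided $\eps \leq 1$. Sign flipping yields the symmetric statements for $\phi^-$.

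The main obstacle is the simultaneous tension among (i) the required size $\phi^+|_{\partial^+\cN_{\eps}} \geq C$, which forces $\alpha \gtrsim C$; (ii) the quadratic gradient penalty of strength $C$, which demands that $|f''|/(f')^2$ exceed something of order $C\alpha \sim C^2$; and (iii) the smoothness of $f$ up to $s = 0$ with $f(0) = 0$. A naive exponential ansatz $f(s) = A(1-e^{-Bs})$ has the constant ratio $|f''|/(f')^2 = B$, but coupling $B$ to both the pointwise size and the endpoint value forces $A$ outside the admissible range as $C$ grows. The logarithmic profile decouples these parameters by making the concavity-to-slope-squared ratio equal the free parameter $L$, which is why it closes the estimate for every $C$.
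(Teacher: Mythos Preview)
Your proof is correct. Both your argument and the paper's share the same backbone: write $\phi^+ = \psi(d_{\partial\Omega})$ for a concave one-variable profile $\psi$ and exploit $|Dd|\equiv 1$, $|\Delta d|\le M$ to reduce the PDE inequality to an ODE-type condition on $\psi$. The difference is in the choice of $\psi$. The paper takes $\psi$ to be the \emph{exact} solution of $\psi'' = -C'(1+(\psi')^2)$, giving the explicit tangent-integral family $\psi(x)=\int_0^x \tan(-C'y+\arctan s)\,dy$, and then picks $s$ large and $\eps$ small. You instead use the logarithmic profile $\psi(s)=C\log(1+\beta s)/L$, whose identity $\psi'' = -\tfrac{L}{C}(\psi')^2$ kills the quadratic gradient term directly once $L=2C^2$, and you absorb the remaining constant and $\Delta d$ terms by making $\psi'\gtrsim 1/\eps$ large via small $\eps$. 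Both mechanisms are equivalent in spirit; your version is a bit more elementary (no trig, fully explicit constants), and your observation that $\phi^- = -\phi^+$ automatically satisfies the supersolution inequality is a clean shortcut the paper leaves implicit under ``the construction for $\phi^-$ is similar.''
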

\begin{proof}
    Fix  $\eps \in (0,\eps_0)$, a smooth function $\psi : [0,\eps] \to \R$ , and let %the $\phi^+: \ov{\cN_{\eps}} \to \R$ of the form 
    \begin{align*}
        \phi^+(x) = \psi(d_{\partial \Omega}(x)).
    \end{align*}
  %  where $\psi : [0,\eps) \to \R$ is a smooth function. 
    Then, 
    \begin{align*}
        D \phi^+ = \psi'(d_{\partial \Omega}(x)) Dd_{\partial \Omega}(x), 
        \quad \Delta \phi^+(x) = \phi^{''}(d_{\partial \Omega}(x)) |Dd_{\partial \Omega}(x)|^2 + \psi'(x) \Delta d_{\partial \Omega}(x).
    \end{align*}
   Recalling  that, in  $\cN_{\eps}$, $|Dd_{\partial \Omega}| = 1$ and that $\Delta d_{\partial \Omega}$ is bounded, we find, for some $C'$ depending on the bound on $\Delta d$, 
    \begin{align*}
        \Delta \phi^+(x) + C \big(1 + |D \phi^+|^2\big) \leq \psi^{''}(d_{\partial \Omega}(x)) + C' \big(1 + |\psi'(d_{\partial \Omega}(x))|^2).
    \end{align*}
  %  where $C_1'$ depends on the bounds on $\Delta d$. 
    Thus, to have $\Delta \phi^+ \leq - C \big( 1 + |D \phi^+|^2 \big)$, it suffices to choose $\psi$ satisfying 
    \begin{align*}
        \psi^{''} = - C'\big(1 + |\psi'|^2 \big).
    \end{align*}
    The solutions to this ODE, with the initial condition $\psi(0) = 0$, are given by the  1-parameter family 
    \begin{align*}
        \psi(x) = \int_0^x \tan\big( - C' y + \arctan(s) \big) dy \ \text{for} \ s \in \R. 
    \end{align*}
It turns out that, if we choose $\eps$ small enough and  $s$ large enough, then this $\psi$ will satisfy $\psi'(x) > C$ on $[0,\eps]$, $\psi(\eps) > C$, so $\phi^+$ will have the desired properties. The construction for $\phi^-$ is similar.  \end{proof}

We note that under Assumption \ref{assump.main}, we can choose $C$ large enough so  that 
\begin{align} \label{C1}
    \Big|\frac{1}{N} H(x^i,Np^i) \Big| + |F(m_{\bx}^{N,K}) - F(m_{\bx^{-i}}^{N,K-1})| \leq \frac{C}{N} \big(1 + |Np^i|^2\big), 
\end{align}
%and $C_2$ large enough so that 
\begin{align} \label{C2}
    |V^{N,K}(t,\bx) - V^{N,K-1}(t,\bx^{-i})| \leq C/N, \text{ and } 
\end{align}
\begin{align} \label{C3}
    |G(m_{\bx}^{N,K}) - G(m_{\bx^{-i}}^{N,K-1})| \leq \frac{C}{N} d(x^i,\partial \Omega). 
\end{align}

\begin{lemma} \label{lem.induction.gen}
    Let $C$  be a  positive constant such that \eqref{C1}, \eqref{C2} and  \eqref{C3} hold. If $\eps$, $\phi^+$ and  $\phi^-$ are  as in the statement of Lemma~\ref{lem.barrier},  then, for each $N$, $K\in\{1,\ldots,N\}$ and $\bx \in \Omega^K$ with $x^i \in \cN_{\eps}$,% we have 
    \begin{align*}
      V^{N,K-1}(t,\bx^{-i}) + \frac{1}{N} \phi^-(x^i) \leq  V^{N,K}(t,\bx) \leq V^{N,K-1}(t,\bx^{-i}) + \frac{1}{N} \phi^+(x^i).
    \end{align*}
\end{lemma}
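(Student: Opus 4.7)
The plan is to argue by induction on $K \in \{1,\dots,N\}$, establishing the upper and lower bounds simultaneously. For a fixed index $i \in \{1,\dots,K\}$, introduce the barrier functions
\begin{align*}
W^{\pm}(t,\bx) := V^{N,K-1}(t, \bx^{-i}) + \tfrac{1}{N}\, \phi^{\pm}(x^i)
\end{align*}
on the cylinder $[0,T] \times \ov{\Omega}^K$, restricted to $x^i \in \ov{\cN_\eps}$. The goal is to prove $W^- \leq V^{N,K} \leq W^+$ there by a classical parabolic comparison principle (which applies since the Laplacian is non-degenerate and $H$ is locally Lipschitz in $p$ by \eqref{assump.main1}).

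The interior verification rests on the calculation, already implicit in the proof of Lemma \ref{lem.crudebound}, that $\hat{V}^{N,K-1}(t,\bx) := V^{N,K-1}(t, \bx^{-i})$ satisfies the operator $L$ from \eqref{hjbnk} with right-hand side $F(m_{\bx^{-i}}^{N,K-1}) + \frac{1}{N} H(x^i, 0)$. Since $\phi^\pm$ depends only on $x^i$ one has $N D_{x^i} W^\pm = D\phi^\pm(x^i)$, and a direct computation (with the $\frac{1}{N}H(x^i,0)$ terms cancelling) gives
\begin{align*}
L[W^\pm](t,\bx) = F(m_{\bx^{-i}}^{N,K-1}) - \tfrac{1}{N} \Delta \phi^\pm(x^i) + \tfrac{1}{N} H\bigl(x^i, D\phi^\pm(x^i)\bigr).
\end{align*}
Comparing this with $F(m_{\bx}^{N,K})$ via $|F(m_\bx^{N,K}) - F(m_{\bx^{-i}}^{N,K-1})| \leq C/N$ from \eqref{C1} and the quadratic bound on $H$ from \eqref{assump.main1}, the inequalities of Lemma \ref{lem.barrier} — with constant chosen \emph{after} the constants in \eqref{C1}--\eqref{C3} are fixed — will ensure that $W^+$ is a supersolution and $W^-$ a subsolution in the interior of the cylinder.

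For the parabolic boundary of this cylinder four faces need to be handled. On the terminal face $\{t=T\}$, \eqref{C3} gives $|G(m_\bx^{N,K}) - G(m_{\bx^{-i}}^{N,K-1})| \leq (C/N)\,d(x^i,\partial\Omega)$, which pairs with the pointwise bounds $\phi^+ \geq C d_{\partial\Omega}$ and $-\phi^- \geq C d_{\partial\Omega}$ from Lemma \ref{lem.barrier}. On the face $\{x^i \in \partial\Omega\}$, $\phi^\pm$ vanishes and the compatibility condition \eqref{compatcond} forces $V^{N,K} = V^{N,K-1}(t,\bx^{-i}) = W^\pm$. On the face $\{x^i \in \partial^+\cN_\eps\}$, $|\phi^\pm| \geq C$ is combined with the bound $|V^{N,K} - V^{N,K-1}(t,\bx^{-i})| \leq C_0/N$ from Lemma \ref{lem.crudebound}. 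On a face $\{x^j \in \partial\Omega\}$ with $j \neq i$, the hierarchy gives simultaneously $V^{N,K}(t, \bx) = V^{N,K-1}(t, \bx^{-j})$ and $V^{N,K-1}(t,\bx^{-i}) = V^{N,K-2}(t,\bx^{-i,-j})$; the induction hypothesis applied to $V^{N,K-1}$ at the same distinguished index $i$ then bounds $V^{N,K-1}(t, \bx^{-j})$ above (resp.~below) by $V^{N,K-2}(t,\bx^{-i,-j}) + \frac{1}{N}\phi^\pm(x^i)$, which is exactly $W^\pm(t,\bx)$.

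The main obstacle is this last boundary face, where the hierarchy identifies $V^{N,K}$ with $V^{N,K-1}(t, \bx^{-j})$ rather than with the pivot $V^{N,K-1}(t, \bx^{-i})$ used to build $W^\pm$; reconciling the two requires interlocking the hierarchy identities at two levels with the inductive bound. The base case $K = 1$ is a simpler instance of the same argument in which this fourth face is absent, so only the terminal, $\{x^1 \in \partial\Omega\}$, and $\{x^1 \in \partial^+\cN_\eps\}$ comparisons need to be checked against $W^\pm(t,x^1) = V^{N,0}(t) + \frac{1}{N}\phi^\pm(x^1)$. Once all boundary inequalities are verified, the classical comparison principle closes the induction step and yields the claimed bound.
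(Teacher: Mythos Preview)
Your proposal is correct and follows essentially the same approach as the paper: induction on $K$, comparison principle with the barrier $W^\pm = V^{N,K-1}(t,\bx^{-i}) + \tfrac{1}{N}\phi^\pm(x^i)$ on the cylinder $\{x^i \in \cN_\eps\}$, and verification on the same four boundary faces (terminal via \eqref{C3}, $x^i \in \partial\Omega$ via \eqref{compatcond}, $x^i \in \partial^+\cN_\eps$ via Lemma~\ref{lem.crudebound}, and $x^j \in \partial\Omega$ via the inductive hypothesis). The paper proves only the upper bound and declares the lower bound analogous, whereas you treat both simultaneously; your interior computation of $L[W^\pm]$ is also slightly more explicit than the paper's, but the argument is the same.
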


\begin{proof}
    Since the proof  of the two inequalities are similar, we  only prove the second bound.
    %the first one having an almost identical proof. We are going to fix $N$, and induct on $K$.

    Recall that when $K = 1$,  $V^{N,K-1}=V^{N,0}=G(\bzero) + (T-t) F(\bzero)$, and the first step of the induction is to prove that
    \begin{align*}
      V^{N,1}(t,x^1) \leq G(\bzero) + (T-t) F(\bzero) + \frac{1}{N} \phi^+(x^1).
    \end{align*}
    Notice that by the choice of $C$, the map $(t,x^1) \mapsto G(\bzero) + \frac{1}{N} \phi^+(x^1)$ is a supersolution of the equation for $V^{N,1}$ on the domain $[0,T] \times \cN_{\eps}$, thus  it suffices to show the bound on the terminal and lateral boundaries. At time $T$, we have 
    \begin{align*}
        V^{N,1}(T,x^1) = G(\frac{1}{N} \delta_{x^1}) \leq G(\bzero) + C \bd(\frac{1}{N} \delta_{x^1}, \bzero) \leq G(\bzero) + \frac{1}{N} \phi^+(x^1). 
    \end{align*}
    For $x^1 \in \partial \Omega$, clearly 
    \begin{align*}
        V^{N,1}(t,x^1) = G(\bzero) + (T-t) F(\bzero) = G(\bzero) + (T-t) F(\bzero) + \frac{1}{N} \phi^+(x^1).
    \end{align*}
    Finally, for $x^1$ on the inner boundary of $\cN$, we use Lemma \ref{lem.crudebound} to get, using that $\phi^+ \geq C$ on the inner boundary
    \begin{align*}
        V^{N,1}(t,x^1) \leq G(\bzero) + (T-t) F(\bzero) + \frac{C}{N} \leq G(\bzero) + (T-t) F(\bzero) + \frac{1}{N} \phi^+(x^1).
    \end{align*}
    %since $\phi^+ \geq C$ on the inner boundary. 
        
    Now suppose that, for some fixed $N$ and $K \in \{1,...,N\}$, the bound holds for $V^{N,K-1}$. Since  $V^{N,K-1}(t,\bx^{-i}) + \frac{1}{N} \phi^+(x^i)$ is 
%    \begin{align*}
%        (t,\bx) \mapsto V^{N,K-1}(t,\bx^{-i}) + \frac{1}{N} \phi^+(x^i)
%    \end{align*}
    is a supersolution to the equation satisfied by $V^{N,K}$ on the domain $[0,T] \times \big(\Omega^K \cap \{x^i \in \cN_{\eps}\} \big)$,  we need to show that the bound holds on the parabolic boundary of this domain. At time $T$, we have 
    \begin{align*}
        V^{N,K}(T,\bx) &= G(m_{\bx}^{N,K}) \leq G(m_{\bx^{-i}}^{N,K-1}) + C \bd(m_{\bx}^{N,K}, m_{\bx^{-i}}^{N,K-1}) 
        \\
        & \leq G(m_{\bx^{-i}}^{N,K-1}) + \frac{C}{N} d(x^i,\partial \Omega) \leq G(m_{\bx^{-i}}^{N,K-1}) + \frac{1}{N} \phi^+(x^i).
    \end{align*}
    For $x^i \in \partial \Omega$, the two functions agree and,  for $x^i\in \partial^+ \cN_{\eps}$, we again use Lemma \ref{lem.crudebound} to get 
    \begin{align*}
        V^{N,K}(t,\bx) \leq V^{N,K-1}(t,\bx^{-i}) + \frac{C}{N} \leq V^{N,K-1}(t,\bx^{-i}) + \frac{1}{N} \phi^+(x^i). 
    \end{align*}
    Finally, if  $x^j \in \partial \Omega$ for some $j \neq i$, we use the inductive hypothesis to conclude that 
    \begin{align*}
        V^{N,K}(t,\bx) = V^{N,K-1}(t,\bx^{-j}) \leq V^{N,K-2}(t,\bx^{-j,-i}) + \frac{1}{N} \phi^+(x^i) = V^{N,K-1}(t,\bx^{-i}) + \frac{1}{N} \phi^+(x^i).
    \end{align*}
   The proof is now complete.  
   \end{proof}

\begin{proposition} \label{prop.DiVNK}
    There is a positive constant $C$ such that, for each $N \in \N$,  $K \in \{1,...,N\}$, $t \in [0,T]$ and $\bx,\by \in \Omega^K$, we have 
    \begin{align*}
        |V^{N,K}(t,\bx) - V^{N,K}(t,\by)| \leq \frac{C}{N} \sum_{i  = 1}^K |x^i - y^i|.
    \end{align*}
\end{proposition}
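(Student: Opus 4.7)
The plan is to use a doubling-of-variables argument that treats the full family $(V^{N,K})_{K=1,\ldots,N}$ simultaneously, combining the boundary-layer estimates of Lemmas \ref{lem.crudebound} and \ref{lem.induction.gen} (which control the lateral boundary of each $\Omega^K$) with a viscosity-type maximum-principle argument in the interior. For parameters $C_0, \lambda > 0$ to be chosen large (independently of $N$) and a smoothing parameter $\eps > 0$, I would consider
\begin{align*}
M := \max_{K\in\{1,\ldots,N\}}\,\sup_{(t,\bx,\by)\in[0,T]\times\ov{\Omega}^K\times\ov{\Omega}^K}\Bigl\{V^{N,K}(t,\bx)-V^{N,K}(t,\by)-\frac{C_0\, e^{\lambda(T-t)}}{N}\sum_{i=1}^K\chi_\eps(x^i-y^i)\Bigr\},
\end{align*}
where $\chi_\eps(z)=\sqrt{|z|^2+\eps^2}$ smooths $|\cdot|$ from above with $|D\chi_\eps|\leq 1$. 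By compactness a maximizer $(\bar K, \bar t, \bar\bx, \bar\by)$ exists, and the desired Lipschitz bound follows once we show $M\to 0$ as $\eps\to 0$ uniformly in $N$.

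I would then distinguish cases by the location of the maximizer. \emph{Terminal time} $(\bar t=T)$: the Lipschitz continuity of $G$ in $\bd$ together with the bound $\bd(m_{\bar\bx}^{N,\bar K},m_{\bar\by}^{N,\bar K})\leq\tfrac{1}{N}\sum_i|\bar x^i-\bar y^i|$ gives $M\leq 0$ provided $C_0\geq C_G$. \emph{Lateral boundary} (say $\bar x^i\in\partial\Omega$, the case $\bar y^i\in\partial\Omega$ being symmetric): the compatibility condition yields $V^{N,\bar K}(\bar t,\bar\bx)=V^{N,\bar K-1}(\bar t,\bar\bx^{-i})$, and combining Lemma \ref{lem.induction.gen} (for $\bar y^i$ close to $\partial\Omega$) with Lemma \ref{lem.crudebound} (otherwise) and the estimate $|\phi^\pm(y)|\leq C\,d_{\partial\Omega}(y)$ (from smoothness of $\phi^\pm$ vanishing on $\partial\Omega$) gives
\begin{align*}
|V^{N,\bar K}(\bar t,\bar\by)-V^{N,\bar K-1}(\bar t,\bar\by^{-i})|\leq\tfrac{C_1}{N}\,d_{\partial\Omega}(\bar y^i)\leq \tfrac{C_1}{N}|\bar x^i-\bar y^i|;
\end{align*}
comparing the value at $(\bar K,\bar t,\bar\bx,\bar\by)$ with the reduced configuration $(\bar K-1,\bar t,\bar\bx^{-i},\bar\by^{-i})$ (also at most $M$ by maximality) and exploiting the strict inequality $\chi_\eps>|\cdot|$ yields an immediate contradiction for $C_0\geq C_1$.

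The remaining case is the interior one: $\bar t\in[0,T)$ and all $\bar x^i,\bar y^i\in\Omega$ with $\bar\bx\neq\bar\by$. The first-order max conditions yield $ND_{x^i}V^{N,\bar K}(\bar t,\bar\bx)=-ND_{y^i}V^{N,\bar K}(\bar t,\bar\by)=C_0\, e^{\lambda(T-\bar t)}D\chi_\eps(\bar x^i-\bar y^i)$, bounded in norm by $C_0\, e^{\lambda T}$; the Hessian condition gives $\sum_i[\Delta_{x^i}V-\Delta_{y^i}V]\leq\tfrac{2C_0\, e^{\lambda(T-\bar t)}}{N}\sum_i\tr D^2\chi_\eps$; and the time condition yields $\partial_t V(\bar t,\bar\bx)-\partial_t V(\bar t,\bar\by)=-\lambda\Psi^{\bar K}_\eps$. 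Subtracting the two HJB equations, using $|D_{x,p}H|\leq C(1+|p|)$ together with the Lipschitz continuity of $F$ in $\bd$, and absorbing the resulting error terms into the dissipation $\lambda\Psi^{\bar K}_\eps$ coming from the exponential time weight, one derives a contradiction to $M>0$ for $\lambda$ sufficiently large (independent of $N$). The main obstacle is precisely this interior case: the Hamiltonian difference $\tfrac{1}{N}\sum_i[H(\bar x^i,p_i)-H(\bar y^i,-p_i)]$ is only $O(1)$ a priori (each $|p_i|\leq C_0\, e^{\lambda T}$ is not small in $N$), and the Hessian trace contributes $O(1/\eps)$; both must be absorbed by $\lambda\Psi^{\bar K}_\eps$, which is precisely why the exponential time weight $e^{\lambda(T-t)}$ is introduced in the penalization. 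Verifying that $C_0$ and $\lambda$ can be chosen independently of $N$ while the final error vanishes as $\eps\to 0$ is the delicate part of the argument.
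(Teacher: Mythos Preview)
Your overall strategy is exactly the one the paper uses: maximize over all $K$ simultaneously, penalize with $\lambda(t)\tfrac{1}{N}\sum_i\chi_\eps(x^i-y^i)$ where $\lambda(t)=C'e^{\hat C(T-t)}$, and rule out the terminal, boundary, and interior cases in turn. The boundary case is handled precisely as you describe, via the estimate $|V^{N,K}(t,\bx)-V^{N,K-1}(t,\bx^{-i})|\leq \tfrac{C'}{N}d_{\partial\Omega}(x^i)$ obtained from Lemmas~\ref{lem.crudebound} and \ref{lem.induction.gen}.

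The gap is in your interior case, and it is exactly the point you flag as the ``main obstacle''. First, your first-order condition has a sign slip: at the maximum one gets $ND_{x^i}V^{N,K}(\bar t,\bar\bx)=ND_{x^i}V^{N,K}(\bar t,\bar\by)=p_i$, so the Hamiltonian difference is $\tfrac{1}{N}\sum_i[H(\bar x^i,p_i)-H(\bar y^i,p_i)]$, controlled by $|D_xH|\leq C_H(1+|p|)$ alone. More importantly, your second-order bound $\sum_i[\Delta_{x^i}V-\Delta_{y^i}V]\leq\tfrac{2C_0 e^{\lambda(T-\bar t)}}{N}\sum_i\tr D^2\chi_\eps$ is too weak and does produce an $O(1/\eps)$ term that cannot be absorbed by $\lambda\Psi^{\bar K}_\eps$. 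The correct move (and what the paper does) is to test the full second-order maximum condition for $(\bx,\by)\mapsto V(\bx)-V(\by)-\text{pen}(\bx-\by)$ with vectors of the form $(\xi,\xi)$: since the penalty depends only on $\bx-\by$, its Hessian annihilates such vectors, and one obtains directly $D_{x^ix^i}V^{N,K}(\bar t,\bar\bx)\leq D_{x^ix^i}V^{N,K}(\bar t,\bar\by)$, hence $\sum_i\Delta_{x^i}V^{N,K}(\bar t,\bar\bx)\leq\sum_i\Delta_{x^i}V^{N,K}(\bar t,\bar\by)$ with no error term at all. With this in hand, subtracting the two equations and using the bounds on $D_xH$ and on $F$ yields the contradiction for $\hat C=C_H+C_F$; no delicate balancing is needed.
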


\begin{proof}
Combining Lemma \ref{lem.crudebound} and Lemma \ref{lem.induction.gen}, we see that there exists a constant $C'$ such that, for each $N \in \N$, $K \in \{1,...,N\}$, $i \in \{1,...,K\}$, and $\bx \in \Omega^K$, we have 
   \begin{align} \label{removeparticle}
       |V^{N,K}(t,\bx) - V^{N,K-1}(t,\bx^{-i})| < \frac{C'}{N} d_{\partial \Omega}(x^i).
   \end{align}
   Moreover, from the Lipschitz continuity of $G$, we can choose $C'$ larger if necessary so that 
   \begin{align} \label{glipbound}
       |G(m_{\bx}^{N,K}) - G(m_{\by}^{N,K})| \leq \frac{C'}{N} \sum_{i = 1}^K |x^i - y^i|.
   \end{align}
   We now define 
   \begin{align*}
       \lambda(t) = C' \exp\big( \hat C(T-t) \big), 
   \end{align*}
   where $\hat C = C_F+C_H$, with $C_F$ and $C_H$ chosen large enough that
   \begin{align*}
       |F(m_{\bx}^{N,K}) - F(m_{\by}^{N,K})| \leq \frac{C_F}{N} \sum_{i = 1}^K |x^i - y^i|, \quad 
       |D_x H(x,p)| \leq C_H(1 + |p|).
   \end{align*}
   For $\eps > 0$, we consider the optimization problem 
   \begin{align} \label{meps}
      M_{\eps} =  \max_{K = 1,...,N} \sup_{t \in [0,T], \bx, \by \in \Omega^K} \Big\{ V^{N,K}(t,\bx) - V^{N,K}(t,\by) - \lambda(t) \frac{1}{N} \sum_{i = 1}^K \big(|x^i - y^i|^2 + \eps^2 \big)^{1/2} \Big\}.
   \end{align}
 and aim to show that  $M_{\eps} \leq 0$ for all $\eps > 0$, which will clearly imply the result. 

  In view of \eqref{glipbound}, it suffices to show that any optimizer $(K_0, t_0,\bx_0,\by_0)$ for \eqref{meps} satisfies $t_0 = T$.
 So, arguing by contradiction, we assume  that there is an optimizer $(K_0, t_0,\bx_0,\by_0)$ for \eqref{meps} with $t_0 < T$. We next want to rule out the possibility that $\bx_0 \in \partial(\Omega^K)$ or $\by_0 \in \partial(\Omega^K)$. Indeed, if 
    %To this end, suppose towards a contradiction that 
    $\bx_0^j \in \partial \Omega$ for some $i = 1,...,K_0$, then,  by optimality, we have 
   \begin{align*}
       &V^{N,K_0}(t_0,\bx_0) - V^{N,K_0}(t_0,\by_0) - \lambda(t_0) \frac{1}{N} \sum_{j = 1}^{K_0} \big(|x^j_0 - y_0^j|^2 + \eps^2 \big)^{1/2}
       \\
       &\quad \geq V^{N,K_0-1}(t_0,\bx_0^{-i}) - V^{N,K_0-1}(t,\by_0^{-i}) - \lambda(t_0) \frac{1}{N} \sum_{\substack{j = 1,...,K_0 \\ j \neq i }} \big(|x_0^j - y_0^j|^2 + \eps^2 \big)^{1/2}.
   \end{align*}
   Rearranging the terms in the last inequality  and recalling that, for $x_0^i \in \partial \Omega$,  $V^{N,K_0-1}(t_0,\bx_0^{-i}) = V^{N,K_0}(t_0,\bx_0)$, we find  
   \begin{align*}
      C' |x_0^i - y_0^i| <  \lambda(t_0) \big(|x_0^i - y_0^i|^2 + \eps^2 \big)^{1/2} \leq V^{N,K_0-1}(t_0,\by_0^{-i}) - V^{N,K_0}(t_0,\by_0) \leq C' d_{\partial \Omega}(y_0^i) \leq C' |x_0^i - y_0^i|, 
   \end{align*}
 which  is a contradiction.  The same argument shows that $y_0^i \in \Omega$ for each $i$. 

   Since $\by_0$ and $\bx_0$ are in the interior of $\Omega^{K_0}$, $V^{N,K_0}$ is smooth in a neighborhood of $(t_0,\bx_0)$ and in a neighborhood of $(t_0,\by_0)$. Thus the optimality conditions for \eqref{meps} show that 
   \begin{align*}
    &\partial_t V^{N,K_0}(t_0,\bx_0) - \partial_t V^{N,K_0}(t_0,\by_0) = \lambda'(t_0) \frac{1}{N} \sum_{j = 1}^K \big(|x^j_0 - y_0^j|^2 + \eps^2 \big)^{1/2}, 
    \\
    &D_{x^i} V^{N,K_0}(t_0,\bx_0) = D_{x^i} V^{N,K_0}(t_0,\by_0) = \frac{\lambda(t_0)}{N} \frac{(x_0^i - y_0^i)}{(|x_0^i - y_0^i|^2 + \eps^2)^{1/2}},
    \\
    &D_{x^ix^i} V^{N,K_0}(t_0,\bx_0) \leq  D_{x^ix^i} V^{N,K_0}(t_0,\by_0). 
   \end{align*}
    Using the equation for $V^{N,K_0}$, we deduce that 
    \begin{align*}
        - \partial_t V^{N,K_0}(t_0,\bx_0) - \sum_{i = 1}^{K_0} \Delta_{x^i} V^{N,K_0}(t_0,\bx_0) + \frac{1}{N} \sum_{i = 1}^{K_0} H\Big( x_0^i,  \lambda(t_0) \frac{(x_0^i - y_0^i)}{(|x_0^i - y_0^i|^2 + \eps^2)^{1/2}} \Big) = F(m_{\bx_0}^{N,K_0}), 
    \end{align*}
    and 
    \begin{align*}
        - \partial_t V^{N,K_0}(t_0,\bx_0) - \sum_{i = 1}^{K_0} \Delta_{x^i} V^N(t_0,\bx_0) + \frac{1}{N} \sum_{i = 1}^{K_0} H\Big( y_0^i,  \lambda(t_0) \frac{(x_0^i - y_0^i)}{(|x_0^i - y_0^i|^2 + \eps^2)^{1/2}} \Big) = F(m_{\bx_0}^{N,K_0}), 
    \end{align*}
    so that 
    \begin{align*}
        \hat C \lambda(t_0)& \frac{1}{N} \sum_{j = 1}^{K_0} \big(|x^j_0 - y_0^j|^2 + \eps^2 \big)^{1/2} = - \lambda'(t_0) \frac{1}{N} \sum_{j = 1}^{K_0} \big(|x^j_0 - y_0^j|^2 + \eps^2 \big)^{1/2}
        \\
        &= - \partial_t V^{N,K_0}(t_0,\bx_0) + \partial_t V^{N,K_0}(t_0,\by_0) 
        \\
        &= \sum_{i = 1}^{K_0} \Big( \Delta_{x^i} V^{N,K_0}(t_0,\bx_0) - \Delta_{x^i} V^{N,K_0}(t_0,\by_0) \Big) + F(m_{\bx_0}^{N,K_0}) - F(m_{\by_0}^{N,K_0})
        \\
        &\quad - \frac{1}{N} \sum_{i = 1}^{K_0} \bigg( H\Big(x_0^i,  \lambda(t_0) \frac{(x_0^i - y_0^i)}{(|x_0^i - y_0^i|^2 + \eps^2)^{1/2}} \Big) - H\Big(x_0^i,  \lambda(t_0) \frac{(x_0^i - y_0^i)}{(|x_0^i - y_0^i|^2 + \eps^2)^{1/2}} \Big) \bigg)
        \\
        &\leq \frac{C_H \lambda(t_0)}{N} \sum_{i = 1}^{K_0} \frac{|x_0^i - y_0^i|^2}{(|x_0^i - y_0^i|^2 + \eps^2)^{1/2}} + \frac{C_F}{N} \sum_{i = 1}^{K_0} |x_0^i - y_0^i|
        \\
        &< \frac{(C_H + C_F)\lambda(t_0)}{N} \sum_{j = 1}^{K_0} \big(|x^j_0 - y_0^j|^2 + \eps^2 \big)^{1/2},
    \end{align*}
   again a contradiction in view of the choice of $\hat C$. It follows that $t_0 = T$ and, hence,   for each $\eps > 0$, 
    \begin{align*}
        M_{\eps} &= V^{N,K_0}(T,\bx_0) - V^{N,K_0}(T,\by_0) - \lambda(T) \frac{1}{N} \sum_{i = 1}^{K_0} \big( |x_0^i - y_0^i|^2 + \eps^2 \big)^{1/2} 
        \\
        &\leq G(m_{\bx_0}^{N,K_0}) -G(m_{\by_0}^{N,K_0}) - \frac{C'}{N} \sum_{i = 1}^{K_0} \big( |x_0^i - y_0^i|^2 + \eps^2 \big)^{1/2} \leq 0, 
    \end{align*}
    which completes the proof. 
    \end{proof}

 To get from Proposition \ref{prop.DiVNK} to Theorem \ref{thm.vnk.lip}, we are going to use the Monge-Kantorovich duality. The basic step is that we need to relate the metric $\bd$ on $\sub$ to an optimal transport problem.  
 
 For this 
 we introduce a function $\rho : \ov{\Omega} \times \ov{\Omega} \to \R$ given by 
 \begin{align*}
     \rho(x,y) = \sup_{\phi \in E} \phi(x) - \phi(y), 
 \end{align*}
 where the supremum is taken over the set $E$ of $1$-Lipschitz functions $\phi : \ov{\Omega} \to \R$ vanishing on $\partial \Omega$.
 \begin{lemma} \label{lem.rho}
     The function $\rho$ defines a  pseudo-metric on $\ov \Omega$, which restricts to a metric on $\Omega$.  Moreover, $\rho$ satisfies the estimate
    \begin{equation}\label{estidelta}
\dfrac12 \min\{|x-y|, d_{\partial\Omega}(x)+ d_{\partial\Omega}(y) \} \leq \rho(x,y) \leq \min\{|x-y|, d_{\partial\Omega}(x)+ d_{\partial\Omega}(y) \} . 
\end{equation}
 \end{lemma}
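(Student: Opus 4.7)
The plan is to verify the metric axioms essentially by inspection, to derive the upper bound directly from the definition of $E$, and finally to establish the lower bound by constructing explicit test functions in two regimes.

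For the metric axioms, non-negativity follows from taking the trivial test function $\phi \equiv 0 \in E$; symmetry, because $E$ is closed under $\phi \mapsto -\phi$; and the triangle inequality, from sub-additivity of the supremum applied to the identity $\phi(x) - \phi(z) = (\phi(x) - \phi(y)) + (\phi(y) - \phi(z))$. Strict positivity $\rho(x,y) > 0$ for distinct $x,y \in \Omega$ is an immediate consequence of the lower bound in \eqref{estidelta} once established, since then both $|x-y|$ and $d_{\partial \Omega}(x) + d_{\partial \Omega}(y)$ are positive.

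For the upper bound, any $\phi \in E$ is $1$-Lipschitz, so $\phi(x) - \phi(y) \leq |x-y|$; on the other hand, picking $z_0 \in \partial \Omega$ nearest to $x$ gives $|\phi(x)| = |\phi(x) - \phi(z_0)| \leq d_{\partial \Omega}(x)$, and likewise for $y$, hence $\phi(x) - \phi(y) \leq d_{\partial \Omega}(x) + d_{\partial \Omega}(y)$. Taking the supremum over $\phi$ yields $\rho(x,y) \leq \min\{|x-y|, d_{\partial \Omega}(x) + d_{\partial \Omega}(y)\}$.

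The main step is the lower bound, where I would split into two regimes. If $|x-y| \leq d_{\partial \Omega}(x) + d_{\partial \Omega}(y)$, then after possibly swapping $x$ and $y$ we may assume $d_{\partial \Omega}(x) \geq |x-y|/2$, and I would use the peak test function $\phi(z) = \bigl( \tfrac{1}{2}|x-y| - |z-x| \bigr)_+$; it is $1$-Lipschitz and lies in $E$ because for $z \in \partial \Omega$ we have $|z-x| \geq d_{\partial \Omega}(x) \geq |x-y|/2$, and it evaluates to $|x-y|/2$ at $x$ and $0$ at $y$, giving $\rho(x,y) \geq |x-y|/2$. If instead $|x-y| \geq d_{\partial \Omega}(x) + d_{\partial \Omega}(y)$, I would use the bump-minus-bump function $\phi(z) = (d_{\partial \Omega}(x) - |z - x|)_+ - (d_{\partial \Omega}(y) - |z - y|)_+$, which evaluates to $d_{\partial \Omega}(x)$ at $x$ and $-d_{\partial \Omega}(y)$ at $y$, so $\phi(x) - \phi(y) = d_{\partial \Omega}(x) + d_{\partial \Omega}(y)$ --- in fact the full minimum, not just half of it.

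The delicate point, and the step I expect to require a little care, is verifying that this bump-minus-bump function is $1$-Lipschitz (rather than only $2$-Lipschitz, which is all one would get from a generic difference of $1$-Lipschitz functions). The key observation is that under $|x-y| \geq d_{\partial \Omega}(x) + d_{\partial \Omega}(y)$, the open balls $B(x, d_{\partial \Omega}(x))$ and $B(y, d_{\partial \Omega}(y))$ on which the two positive parts are supported are disjoint by the triangle inequality. Within each ball $\phi$ coincides with one of the two $1$-Lipschitz bumps (or with $0$), while for $z_1$ in the support of the first bump and $z_2$ in the support of the second one estimates $|\phi(z_1) - \phi(z_2)| = (d_{\partial \Omega}(x) - |z_1 - x|) + (d_{\partial \Omega}(y) - |z_2 - y|) \leq |x-y| - |z_1 - x| - |z_2 - y| \leq |z_1 - z_2|$ using the triangle inequality. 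Combining the two regimes yields the claimed lower bound and completes the proof.
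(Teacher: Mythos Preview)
Your proof is correct. The upper bound and the metric axioms match the paper's argument essentially verbatim. For the lower bound, however, you take a genuinely different route: the paper uses the single test function $\phi(a)=\min\{|a-y|,\,d_{\partial\Omega}(a)\}\in E$, which gives $\rho(x,y)\geq\min\{|x-y|,d_{\partial\Omega}(x)\}$, then swaps the roles of $x$ and $y$ and combines the two via the elementary inequality $\min(c,a)+\min(c,b)\geq\min(c,a+b)$. Your approach instead splits into the two regimes $|x-y|\lessgtr d_{\partial\Omega}(x)+d_{\partial\Omega}(y)$ and builds a tailored bump function in each case, with the disjoint-support argument to recover the $1$-Lipschitz constant in the second regime. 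The paper's route is shorter and avoids the case distinction; your route has the incidental bonus of giving the sharper bound $\rho(x,y)\geq d_{\partial\Omega}(x)+d_{\partial\Omega}(y)$ (no factor $\tfrac12$) in the second regime, though this is not needed downstream. One very minor point: in your Lipschitz verification you should also mention the mixed case where one point lies in a bump and the other in neither, but this follows immediately from the $1$-Lipschitz property of each individual bump.
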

 \begin{proof}
     It is easy to check that $\rho$ is a pseudo metric on $\ov{\Omega}$, and that its restriction to $\Omega$ is a metric. For the bound \eqref{estidelta}, we first fix $x,y \in \Omega$, and consider the  functions $\phi,\psi \in E$ given by
     \begin{align*}
         \phi(a) = \min\{|a-y|, d_{\partial {\Omega}}(a)\}, \quad \psi(a) = \min\{|a-x|, d_{\partial \Omega}(a)\}.
     \end{align*}
     Then 
     \begin{align*}
       \text{min}\{|x-y|, d_{\partial \Omega}(x)\} =   \phi(x) - \phi(y) \leq \rho(x,y), \text{  and  }
           \text{min}\{|x-y|, d_{\partial \Omega}(y)\} =    \psi(y) - \psi(x) \leq \rho(x,y). 
     \end{align*}
     It follows that
     \begin{align*}
           \text{min}\{|x-y|, d_{\partial \Omega}(x) + d_{\partial \Omega}(y)\} \leq   \text{min}\{|x-y|, d_{\partial \Omega}(x)\} +   \text{min}\{|x-y|, d_{\partial \Omega}(y)\} \leq 2 \rho(x,y), 
     \end{align*}
     which gives the lower bound in \eqref{estidelta}. On the other hand, if $\phi\in E$, then
$\phi(x)-\phi(y)\leq |x-y|$ and, as $\phi$ vanishes on the boundary of $\Omega$, $|\phi(x)|\leq d_{\partial\Omega}(x)$ and  $|\phi(y)|\leq d_{\partial\Omega}(y)$. Thus 
$$
\phi(x)-\phi(y)\leq \min\{ |x-y| , d_{\partial\Omega}(x)+d_{\partial\Omega}(y) \}  , 
$$
which proves the upper bound in \eqref{estidelta}.  
\end{proof}

We note that we can also view $\rho$ as a metric on $\ov{\Omega}/ \sim$, where $\sim$ is the equivalence relation which identifies all boundary points. Let us denote by $\bd_{\rho}$ the Monge-Kantorovitch metric on $\mathcal P(\bar \Omega/ \sim)$ associated with the pseudometric $\rho$, that is, for $\mu,\nu \in \cP(\ov{\Omega}/ \sim)$, 
\begin{align*}
    \bd_{\rho}(\mu,\nu) = \inf_{\pi \in \Pi(\mu,\nu)} \int_{(\ov{\Omega}/ \sim) \times (\ov{\Omega}/ \sim)} \rho(x,y) \pi(x,y),
\end{align*}
with the infimum taken over the set $\Pi(\mu,\nu)$ of couplings of $\mu, \nu$,     that is,  the set of $\pi \in \cP\big( (\ov{\Omega}/ \sim) \times (\ov{\Omega}/ \sim) )$ with first marginal is $\mu$ and second marginal is $\nu$. 

We recall that by Monge-Kantorovich duality
\begin{align*}
    \bd_{\rho}(\mu,\nu) = \sup_{f} \Big( \int_{\ov{\Omega}/\sim} f d \mu - \int_{\ov{\Omega}/\sim} f d \nu \Big), 
\end{align*}
where the supremum is over all functions $f : (\ov{\Omega}/\sim) \to \R$ which are $1$-Lipschitz with respect to $\rho$. 

We note that we can view $\ov{\Omega} / \sim$ as $(\ov{\Omega} / \sim) = \Omega \cup \{[\partial \Omega]\}$ in an obvious way, and so a probability measure $\mu$ on $\ov{\Omega} / \sim$ has an obvious restriction $\mu|_{\Omega}$, which is a sub-probability measure on $\Omega$. The equivalence between between $ \bd_{\rho}$ and $\bd$ is the subject of the next lemma. 

\begin{lemma} \label{lem.comparemetrics}
    For any $\mu, \nu \in \cP(\ov{\Omega}/ \sim)$, we have 
    \begin{align} \label{drho.d}
     \frac{1}{2} \bd\big( \mu|_{\Omega}, \nu|_{\Omega} \big) \leq \bd_{\rho}(\mu,\nu) \leq \bd\big( \mu|_{\Omega}, \nu|_{\Omega} \big)
    \end{align}
\end{lemma}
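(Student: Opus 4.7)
The plan is to prove both inequalities of \eqref{drho.d} by translating between the two dual characterizations via the test-function class $E$. The key identification is that functions on $\ov{\Omega}/\sim$ which vanish at the equivalence class $[\partial \Omega]$ correspond (by extension) to functions on $\ov{\Omega}$ vanishing on $\partial \Omega$; up to adding a constant (which does not affect either side since $\mu$ and $\nu$ are probability measures of the same total mass), this correspondence is bijective. The bound \eqref{estidelta} from Lemma~\ref{lem.rho} will then quantify how much the Lipschitz constants differ when one passes between the metric $\rho$ on $\ov{\Omega}/\sim$ and the Euclidean metric on $\Omega$. Throughout, I use Monge--Kantorovich duality to rewrite $\bd_\rho$ as a supremum over $\rho$-Lipschitz functions.

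For the upper bound $\bd_\rho(\mu,\nu) \le \bd(\mu|_\Omega,\nu|_\Omega)$, let $f:\ov{\Omega}/\sim\,\to\R$ be $1$-Lipschitz with respect to $\rho$. View $f$ as a function on $\ov{\Omega}$ which is constant on $\partial \Omega$, and subtract this constant so that $f = 0$ on $\partial\Omega$ without changing $\int f\,d\mu - \int f\,d\nu$. Since $\rho(x,y) \le |x-y|$ by Lemma~\ref{lem.rho}, this modified $f$ is $1$-Lipschitz in the Euclidean sense on $\Omega$ and vanishes on $\partial\Omega$, hence belongs to $E$. Thus $\int f\,d\mu - \int f\,d\nu = \int_\Omega f\,d(\mu|_\Omega - \nu|_\Omega) \le \bd(\mu|_\Omega, \nu|_\Omega)$, and taking the supremum over $f$ gives the claim.

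For the lower bound $\tfrac12\bd(\mu|_\Omega,\nu|_\Omega) \le \bd_\rho(\mu,\nu)$, I start with an arbitrary $\phi \in E$ and extend it to $\ov{\Omega}/\sim$ by assigning the value $0$ to the class $[\partial\Omega]$ (consistent since $\phi\equiv 0$ on $\partial\Omega$). The essential point is that this extension is $2$-Lipschitz with respect to $\rho$. For two interior points $x,y\in\Omega$, the bound $|\phi(x)-\phi(y)| \le |x-y|$ together with $|\phi(x)-\phi(y)| \le |\phi(x)| + |\phi(y)| \le d_{\partial\Omega}(x) + d_{\partial\Omega}(y)$ gives
\begin{equation*}
|\phi(x) - \phi(y)| \le \min\bigl\{|x-y|,\ d_{\partial\Omega}(x) + d_{\partial\Omega}(y)\bigr\} \le 2\rho(x,y),
\end{equation*}
where the last step uses the lower bound in \eqref{estidelta}. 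For $x\in \Omega$ and $y = [\partial\Omega]$, one first observes that $\rho(x,[\partial\Omega]) = d_{\partial\Omega}(x)$ (the inequality $\le$ coming from $\rho(x,y)\le |x-y|$ for $y\in\partial\Omega$ and $\ge$ coming from testing against $d_{\partial\Omega}\in E$), and then $|\phi(x)-0| \le d_{\partial\Omega}(x) = \rho(x,[\partial\Omega])$. Hence $\phi$ is $2$-Lipschitz in $\rho$, so by Monge--Kantorovich duality $\int\phi\,d\mu - \int\phi\,d\nu \le 2\bd_\rho(\mu,\nu)$. Taking the supremum over $\phi\in E$ yields the desired bound.

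No step here is truly hard; the only subtlety to be careful about is the identification of $\rho(x,[\partial\Omega])$ with $d_{\partial\Omega}(x)$ and the compatibility of the extension-by-$0$ at the boundary class, both of which follow cleanly from the definitions and Lemma~\ref{lem.rho}.
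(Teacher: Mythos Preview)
Your proof is correct and follows essentially the same approach as the paper: both bounds are obtained via duality, using the upper bound in \eqref{estidelta} to show that any $\rho$-$1$-Lipschitz function (after subtracting its boundary value) lies in $E$, and the lower bound in \eqref{estidelta} to show that any $\phi\in E$ is $2$-Lipschitz with respect to $\rho$. You are slightly more explicit than the paper in verifying the boundary case $y=[\partial\Omega]$ and the identity $\rho(x,[\partial\Omega])=d_{\partial\Omega}(x)$, which is a nice touch.
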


\begin{proof}
    On the one hand, if $\phi$ is $1$-Lipschitz with respect to $\rho$, then $\phi$ is constant on the boundary, so that, setting $\tilde{\phi} = \phi - \phi(\partial \Omega)$, we have $\tilde{\phi} \in E$, and thus
    \begin{align*}
        \int_{\ov{\Omega}} \phi d(\mu - \nu) = \int_{\ov{\Omega}} \tilde{\phi} d(\mu - \nu) = \int_{\Omega} \tilde{\phi} d( \mu|_{\Omega} - \nu|_{\Omega}) \leq \bd(\mu|_{\Omega}, \nu|_{\Omega}), 
    \end{align*}
    and so taking a supremum over $\phi$ gives the second bound in \eqref{drho.d}.
    On the other hand, if $\phi \in E$, then from Lemma \ref{lem.rho}, we see that $\phi/2$ is $1$-Lipschitz with respect to $\rho$, so that 
    \begin{align*}
         \int_{\Omega} \phi d( \mu|_{\Omega} - \nu|_{\Omega})= 2 \int_{\ov{\Omega}/ \sim} \frac{\phi}{2} d(\mu - \nu) \leq 2\bd_{\rho}(\mu,\nu), 
    \end{align*}
    and so taking a supremum over $\phi$ gives the first upper bound in \eqref{drho.d}. 
 \end{proof}
For $\bx \in \ov{\Omega}^N$, we can view $m_{\bx}^N$ as a probability measure on $\ov{\Omega}/ \sim$ in an obvious way, i.e. we abuse notation by identifying $m_{\bx}^N$ with the measure $r_{\#} m_{\bx}^N$, with $r : \ov{\Omega} \to \ov{\Omega}/ \sim$ the quotient map. With this notation in place, we have the following standard result which we state without proof.

\begin{lemma} \label{lem.drho.disctete}
    For any $\bx, \by \in \ov{\Omega}^N$, we have 
    \begin{align*}
        \bd_{\rho}(m_{\bx}^N,m_{\by}^N) = \inf_{\sigma} \frac{1}{N} \sum_{i= 1}^N \rho(x^i,y^{\sigma(i)}), 
    \end{align*}
    with the infimum taken over all permutations $\sigma : \{1,...,N\} \to \{1,...,N\}$.
\end{lemma}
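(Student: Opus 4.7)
The plan is the classical two-sided argument relating optimal transport between empirical measures to optimal assignments. I split it into the easy upper bound, the lower bound via Birkhoff's theorem, and a small technical issue caused by the identification of boundary points.

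First, I would handle the inequality $\bd_\rho(m_\bx^N, m_\by^N) \leq \inf_\sigma \frac{1}{N} \sum_i \rho(x^i, y^{\sigma(i)})$. For each permutation $\sigma$, the measure $\pi_\sigma = \frac{1}{N} \sum_{i=1}^N \delta_{(x^i, y^{\sigma(i)})}$ on $(\ov\Omega/\sim)^2$ belongs to $\Pi(m_\bx^N, m_\by^N)$ since its first marginal is $\frac{1}{N} \sum_i \delta_{x^i} = m_\bx^N$ and its second marginal is $\frac{1}{N} \sum_i \delta_{y^{\sigma(i)}} = m_\by^N$, and its $\rho$-cost equals $\frac{1}{N} \sum_i \rho(x^i, y^{\sigma(i)})$. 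Taking the infimum over $\sigma$ gives this direction.

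For the reverse inequality, my plan is to parameterize $\Pi(m_\bx^N, m_\by^N)$ by the Birkhoff polytope. Every coupling $\pi \in \Pi(m_\bx^N, m_\by^N)$ admits a representation $\pi = \frac{1}{N} \sum_{i,j=1}^N a_{ij} \delta_{(x^i, y^j)}$ for some $N \times N$ matrix $A = (a_{ij})$ with $a_{ij} \geq 0$ and all row and column sums equal to $1$. Under this representation, the $\rho$-cost of $\pi$ becomes the linear functional $A \mapsto \frac{1}{N} \sum_{i,j} a_{ij} \rho(x^i, y^j)$. Since we are minimizing a continuous linear functional over the compact convex set of doubly stochastic matrices, the minimum is attained at an extreme point; by Birkhoff's theorem, the extreme points are precisely the permutation matrices $A_\sigma = (\ind_{\{j = \sigma(i)\}})_{i,j}$, and each such matrix gives $\pi = \pi_\sigma$ with cost $\frac{1}{N} \sum_i \rho(x^i, y^{\sigma(i)})$. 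This yields $\bd_\rho(m_\bx^N, m_\by^N) \geq \inf_\sigma \frac{1}{N} \sum_i \rho(x^i, y^{\sigma(i)})$.

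The only point that requires care---and which I expect to be the main (but minor) obstacle---is the representability claim when some of the $x^i$'s or $y^j$'s coincide in $\ov\Omega/\sim$, which necessarily occurs when several particles sit on $\partial\Omega$ since all boundary points are identified. In that case the map from doubly stochastic matrices to couplings is not injective, but it remains surjective onto $\Pi(m_\bx^N, m_\by^N)$: any coupling $\pi$ can be obtained from a suitable matrix $A$ by distributing the mass at coincident atoms of $m_\bx^N$ (respectively $m_\by^N$) across the corresponding indices. This is a routine disintegration argument and does not affect the identification of the two infima, so combining the two inequalities completes the proof.
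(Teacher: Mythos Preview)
The paper states this lemma without proof, calling it a ``standard result''; your proposal correctly supplies the classical argument via Birkhoff's theorem, including the minor bookkeeping needed to lift couplings to doubly stochastic matrices when atoms coincide (in particular at $[\partial\Omega]$). Nothing is missing.
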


An important step  towards   establishing  the Lipschitz continuity of the $V^{N,K}$'s is described and proved next.

\begin{proposition} \label{prop.lip} There is a constant $C_0$ which is independent of $N$ such that, for any $t\in [0,T]$ and $\bx,\by\in \bar \Omega^N$, 
$$
|V^{N,N}(t,\bx)-V^{N,N}(t,\by)| \leq C_0 \bd_{\rho}( m^{N,N}_{\bx},m^{N,N}_{\by}).
$$ 
\end{proposition}

\begin{proof} Given  Lemma~\ref{lem.drho.disctete}, we only  need to check the existence of a constant $C_0$ such that, for all $t\in [0,T]$ and $\bx,\by\in \bar \Omega^N$,  
\[
|V^{N,N}(t,\bx)-V^{N,N}(t,\by)|\leq \frac{C_0}{N} \sum_{i=1}^N \rho(x^i ,y^i). 
\]
In view of the continuity of $V^{N,N}$, we can also reduce the proof to the case where $\bx,\by\in  \Omega^N$. \\

We fix  $\bx,\by\in  \Omega^N$ and  we first assume  that $x^i = y^i$ for any $i\leq N-1$. Then, we apply Proposition \ref{prop.DiVNK} to find a constant $C_1$ which is independent of $N$, such that
\[
\begin{split}
|V^{N,N}(t,\bx) &-V^{N,N}(t,\by)| \\
&= |V^{N,N}(t,x^1,...,x^{N-1},x^N ) -V^{N,N}(t,x^1,...,x^{N-1},y^N)| \leq \frac{C_1}{N}  |x^{N}-y^{N}|.
\end{split}
\]

In addition, recalling Lemma~\ref{lem.crudebound} and Lemma~\ref{lem.induction.gen}, there is a constant $C_2$ which is independent of $N$ such that
$$
|V^{N,N}(t,\bx) - V^{N,N-1}(t,\bx^{-N})| \leq \frac{C_2}{N} d_{\partial \Omega}(x^N). 
$$
Recalling that  $V^{N,N-1}(t,\bx^{-N})= V^{N,N-1}(t,\by^{-N})$, we also obtain the estimate 
\begin{align*}
|V^{N,N}(t,\bx)-V^{N,N}(t,\by)| &\leq |V^{N,N}(t,\bx)-V^{N,N-1}(t,\bx^{-N})| 
\\
&\qquad +|V^{N,N}(t,\by) -V^{N,N-1}(t,\bx^{-N})| \\ 
& \leq  \frac{C_2}{N} (d_{\partial \Omega}(x^{N})+ d_{\partial \Omega}(y^{N})).  
\end{align*}
Combining the two estimates above  and recalling \eqref{estidelta}, we have derived that 
$$
|V^{N,N}(t,\bx)-V^{N,N}(t,\by)| \leq \frac{C_0}{N} \rho(x^N,y^N), \quad C_0 = 2\max\{C_1,C_2\}.
$$
For the general case we note that

\begin{align*}
&|V^{N,N}(t,\bx)-V^{N,N}(t,\by)|\\
& \leq \sum_{i=0}^{N-1} | V^{N,N}( y^1, \dots, y^i, x^{i+1}, \dots, x^N)-V^{N,N}(y^1,\dots, y^{i+1},  x^{i+2}, \dots, x^N)| 
\leq \frac{C_0}{N} \sum_{i=1}^{N}  \rho(x^{i},y^{i}),  
\end{align*} 
where the second inequality comes from the previous discussion and the symmetry of $V^{N,N}$. Finally, the symmetry of $V^{N,N}$ and Lemma \ref{lem.drho.disctete} complete the proof.  \end{proof}

We are now ready for the proof of Theorem \ref{thm.vnk.lip}. 

\begin{proof}[Proof of Theorem \ref{thm.vnk.lip}]
    Fix $t \in [0,T]$, $\bx \in \Omega^K$, $\by \in \Omega^M$, for some $K,M \in \{1,...,N\}$, and let $z$ be any point in $\partial \Omega$. Then, using Proposition~\ref{prop.lip} and Lemma~\ref{lem.comparemetrics}, we find
    \begin{align*}
        |V^{N,K}(t,\bx) - V^{N,M}(t,\by)| &= |V^{N,N}(t,x^1,...,x^K,z,...,z) - V^{N,N}(t,y^1,...,y^M,z,...,z)| 
        \\
        &\leq C \bd_{\rho}\Big(m_{\bx}^{N,K} + \frac{N-K}{N} \delta_z, m_{\by}^{N,M} + \frac{N-K}{N} \delta_z\Big)
        \leq C \bd(m_{\bx}^{N,K}, m_{\by}^{N,K}). 
    \end{align*}
    
    For the time regularity, we fix $t_0 \in [0,T]$, $\bx_0 \in \ov{\Omega}^N$, and $N$ independent Brownian motions $W^1,...,W^N$, and we set
    \begin{align*}
        X_t^i = x_0^i + \sqrt{2}(W^i_t - W^i_{t_0}), \quad t_0 \leq t \leq T. 
    \end{align*}
    Let  $\tau^i$ be the first time $X_t^i$ hits $\partial \Omega$, that is, 
    \begin{align*}
        \tau^i = \inf\{t \geq t_0 : X_t^i \in \partial \Omega\}. 
    \end{align*}
    Recalling the equation for $V^{N,K}$, the symmetry of $V^{N,K}$, and the fact that $V^{N,N}(t,\bx) = V^{N,K}(t,x^1,...,x^K)$ if $x^{K+1},...,x^{N} \in \partial \Omega$, we find using It\^o's formula that 
    \begin{align} \label{vnn.ito}
        &d V^{N,N}(t, X_{t \wedge \tau^1}^1,...,X_{t \wedge \tau^N}^N) = \frac{1}{N} \sum_{i = 1}^N H\Big(X^i_{t \wedge \tau^i}, N D_{x^i} V^{N,N}(t,X_{t \wedge \tau^1}^1,...,X_{t \wedge \tau^N}^N) \Big) 1_{t < \tau^i}\; dt
       \nonumber \\
        &\qquad + \sum_{i = 1}^N \sqrt{2} D_{x^i} V^{N,N}(t,X_{t \wedge \tau^1}^1,...,X_{t \wedge \tau^N}^N)1_{t < \tau^i} dW_t^i.
    \end{align}
    We note that to obtain \eqref{vnn.ito}, we are applying It\^o's formula over a sequence of stochastic intervals; from time $t_0$ until the first particle exits, from the time of the first exit until the second exit, etc. 
    
    Integrating in time and taking  expectations we get
    \begin{align*}
      V^{N,N}(t_0,\bx_0) &= \E\bigg[V^{N,N}(t_0 + h, X_{(t_0 + h) \wedge \tau^1}^1,...,X_{(t_0 + h) \wedge \tau^N}^N) 
      \\
      &\qquad - \int_{t_0}^{t_0 + h} \frac{1}{N} \sum_{i = 1}^N H\Big(X^i_{t \wedge \tau^i}, N D_{x^i} V^{N,N}(t,X_{t \wedge \tau^1}^1,...,X_{t \wedge \tau^N}^N) \Big) 1_{t < \tau^i} dt \bigg] 
    \end{align*}
    In particular, since Proposition~\ref{prop.DiVNK} yields  that $|D_{x^i} V^{N,N}| \leq C/N,$
%    \begin{align*}
%        |D_{x^i} V^{N,N}| \leq C/N, 
%    \end{align*}
    it follows  that
    \begin{align*}
        \Big|   V^{N,N}(t_0,\bx_0) - \E\Big[V^{N,N}(t_0 + h, X_{(t_0 + h) \wedge \tau^1}^1,...,X_{(t_0 + h) \wedge \tau^N}^N) \Big] \Big| \leq Ch . 
    \end{align*}
    Applying the spatial Lipschitz bound we get
    \begin{align*}
        \big| V^{N,N}(t_0,\bx_0)& - V^{N,N}(t_0+h,\bx_0) \big| 
        \\
        &\leq Ch + \Big| V^{N,N}(t_0+h,\bx_0) - \E\Big[V^{N,N}(t_0 + h, X_{(t_0 + h) \wedge \tau^1}^1,...,X_{(t_0 + h) \wedge \tau^N}^N) \Big] \Big| 
        \\
        &\leq Ch + C\E\Big[ \frac{1}{N} \sum_{i = 1}^N \rho\big(x_0^i, X_{(t_0 + h) \wedge \tau^i}^i\big) \Big]
        \\
        &\leq Ch + C\E\Big[ \frac{1}{N} \sum_{i = 1}^N |x_0^i - X_{(t_0 + h) \wedge \tau^i}^i| \Big] \leq Ch + C \sqrt{h}, 
    \end{align*}
   the last bound following from the fact that, for each $i$,  $\langle X^i \rangle_t = 1_{t < \tau^i}$, so that
    \begin{align*}
        \E\big[ |x_0^i - X_{(t_0 + h) \wedge \tau^i}^i| \big] \leq \E\big[ |x_0^i - X_{(t_0 + h) \wedge \tau^i}^i|^2 \big]^{1/2} = \E\big[ (t_0 + h) \wedge \tau^i - t_0 \big]^{1/2} \leq h^{1/2}.
    \end{align*}  
    \end{proof}

\subsection{Limit points of the $V^{N,K}$.} The objective in this subsection is to show that every limit point is a viscosity solution of \eqref{hjbinf}. Together with Theorem \ref{thm.vnk.lip}, this will be enough to show that the convergence of $(V^{N,K})_{K = 1,...,N}$ towards the unique viscosity solution of \eqref{hjbinf}. We begin  making precise the  meaning  of a  limit point of the hierarchy \eqref{hjbnk}.

\begin{definition} \label{defn.limitpoint}
    
A continuous function
%\begin{align*}
    $V: [0,T] \times \sub \to \R$
is a limit point of the hierarchy \eqref{hjbnk} 
%is a continuous function\\ 
%%\begin{align*}
%    $V: [0,T] \times \sub \to \R$
%%\end{align*}%with the property that  
if there exists  a subsequence $(N_j)_{j \in \N}$, such that $ \underset{{j \to \infty}} \lim  N_j=\infty$ and 
\begin{align} \label{def.limitpoint}
    \lim_{j \to \infty} \max_{K = 1,...,N_j} \sup_{(t,\bx) \in [0,T] \times \ov{\Omega}^K} \big|V(t,m_{\bx}^{N_j,K}) - V^{N_j,K}(t,\bx) \big| = 0. 
\end{align}
%for some subsequence $(N_j)_{j \in \N}$. 
\end{definition}
%The objective in this subsection is to show that every limit point is a viscosity solution of \eqref{hjbinf}. Together with Theorem \ref{thm.vnk.lip}, this will be enough to show that the convergence of $(V^{N,K})_{K = 1,...,N}$ towards the unique viscosity solution of \eqref{hjbinf}. 

An important first step is to show that all limit points of the hierarchy \eqref{hjbnk} are actually  solutions  to  \eqref{hjbinfR} for all large enough $R$.

\begin{proposition} \label{prop.subsol}
   There exists $R_0 > 0$ such that,  for any $R > R_0$ and any limit point $V$ of the hierarchy \eqref{hjbnk}, $V$ is a viscosity solution of \eqref{hjbinfR}.
\end{proposition}

\begin{proof}
The Lipschitz bound of Proposition~\ref{prop.lip} gives an  $R_0 > 0$ such that, for $R > R_0$, $(V^{N,K})_{K = 1,...,N}$ satisfies
\begin{align} \label{hjbnk.R} \tag{$\text{HJB}_{N,K,R}$}
    - \partial_t V^{N,K} - \sum_{i = 1}^K \Delta_{x^i} V^{N,K} + \frac{1}{N} \sum_{i = 1}^K H^R\big(x^i, ND_{x^i} V^{N,K} \big) =F\big(m_{\bx}^{N,K}\big).
\end{align}
%where we still use the notation $V^{N,0}(t) = G(\bzero) + (T-t) F(\bzero)$ for $t \in [0,T]$. 
    For notational simplicity, we do not relabel the subsequence along which $(V^{N,K})_{K= 1,...,N}$ converges to $V$, that is, instead of \eqref{def.limitpoint}, we write 
    \begin{align}  \label{uniformconv}
        \lim_{N \to \infty} \max_{K = 1,...,N} \sup_{(t,\bx) \in [0,T] \times \ov{\Omega}^K} \big|V(t,m_{\bx}^{N,K}) - V^{N,K}(t,\bx) \big| = 0. \ \ 
\end{align}
    %rather than \eqref{def.limitpoint}, that is, we do not relabel the subsequence along which $(V^{N,K})_{K= 1,...,N}$ converges to $V$. 
    Since the arguments are very similar, we only prove the subsolution property.
    
Let  $\Phi$ be  a smooth test function in the sense of Definition~\ref{def.testfunction}, and assume that $(t_0,m_0) \in [0,T) \times (\sub \cap L^2)$ is a maximizer in  \eqref{touchingfromabove}.  By replacing $\Phi$ with 
\begin{align*}
    \Phi(t,m) + \Big( |t-t_0|^2 + \|m - m_0\|_{-s}^2 \Big)
\end{align*}
for some $s > d/2 + 2$ (see Remark \ref{rmk.hminuss}), we see that it suffices to assume that $(t_0,m_0)$ is a strict maximizer. The  aim is to prove that $m_0 \in H_0^1$ and it satisfies  \eqref{subsol.bound} with $C$ a constant which is independent of $\Phi$, $t_0$, and $m_0$. 
 %    By Without loss of generality, we can assume that $(t_0,m_0)$ is the only minimizer in \eqref{touchingfromabove}.
\newline \newline 
    \textit{Step 1 - regularizing  the singular term.}    
    We  regularize the singular penalization term involving $\|m\|_2^2$ using   mollification (convolution) with the family of kernels $ \rho_{\kappa}(x) = \kappa^{-d}\rho(\frac{x}{\kappa}),$ with a parameter $\kappa>0$ and 
   $\rho : \R^d \to \R$ an even and smooth function  with $\rho \geq 0$, $\int_{\R^d} \rho = 1$, and $\rho = 0$ on $B_1^c$.

%    \begin{align*}
%        \rho_{\kappa}(x) = \kappa^{-d}\rho(\frac{x}{\kappa}), \quad \kappa > 0.
%    \end{align*}
    For each  $\kappa > 0$, we introduce the optimization problem 
    \begin{align} \label{kappaproblem}
        \sup_{(t,m) \in [0,T] \times  \sub} \Big\{ V(t,m) - \Phi(t,m) - \delta \|m * \rho_{\kappa} \|_2^2 \Big\}.
    \end{align}
     We note that in \eqref{kappaproblem}, $\|m * \rho_{\kappa}\|_2^2 = \|m * \rho_{\kappa}\|_{L^2(\R^d)}^2 = \int_{\R^d} |m * \rho_{\kappa} (x)|^2 dx$, i.e. we are viewing $m * \rho_{\kappa}$ as a smooth function defined on $\R^d$ and computing its $L^2$ norm on the whole space. We now claim that for each fixed $\kappa > 0$,
    \begin{align*}
       \sub \ni m \mapsto \|m * \rho_{\kappa} \|_2^2
    \end{align*}
    is lower semi-continuous with respect to $\bd$. Indeed, suppose $(m_n)_{n \in \N}$ is a sequence in $\sub$ converging to $m \in \sub$ (with respect to $\bd$). Choose a subsequence $(m_{n_k})$ such that 
    \begin{align*}
        \liminf_{n \to \infty} \| m_n * \rho_{\kappa}\|_2^2 = \lim_{k \to \infty} \| m_{n_k} * \rho_{\kappa}\|_2^2.
    \end{align*}
    Then, passing to a further subsequence if necessary (which we do not relabel for simplicity), we can find $\ov m \in \sub(\ov \Omega)$ such that $m_{n_k} \to \ov m$ weakly in $\sub(\ov \Omega)$. It follows that $\ov m|_{\Omega} = m$, and hence 
    \begin{align*}
        \lim_{k \to \infty} \|m_{n_k} * \rho_{\kappa}\|_2^2 = \| \ov m * \rho_{\kappa}\|_2^2 \geq \| m * \rho_{\kappa}\|_2^2.
    \end{align*}
    This proves that $m \mapsto \|m * \rho_{\kappa}\|_2^2$ is lower semi-continuous with respect to $\bd$, and hence the objective appearing in \eqref{kappaproblem} is upper semi-continuous. 
   Since  $(\sub, \bd)$ is a compact metric space, it follows that for each $\kappa>0$,  there exists at least  one optimizer $(t_{\kappa},m_{\kappa})$ for the problem \eqref{kappaproblem}. 
   
   %Next, we show that  the  only weak-$*$ limit  as $\kappa\to 0$ of the minimizers for \eqref{kappaproblem} is $(t_0, m_0)$. 
   %
  % then, as  $(t_{\kappa}, m_{\kappa}) \to (t_0,m_0)$ weakly -$*$ tas $\kappa \to 0$. 
   %Indeed, let $(\hat{t},\hat{m}) \in [0,T] \times \sub(\ov{\Omega})$ be a limit point of $(t_{\kappa}, m_{\kappa})$ in the weak-$*$ topology, that is, for some $\kappa_j \to  0$,  %which, to keep the notation simpler, we still denote as $\kappa\to 0$, 
    %\begin{align*}
        %(t_j,m_j) \coloneqq 
    %    (t_{\kappa_j}, m_{\kappa_j})=(t_{\kappa_j}, m_j * \rho_{\kappa_j}) \rightharpoonup (\hat{t},\hat{m}) \ \text{weakly-$*$ in $\sub$.}
    %\end{align*}
    %for some $\kappa_j \downarrow 0$. 
    
    %Now notice that the sequence $m_j * \rho_{\kappa_j}$ is bounded in $L^2(\R^d)$, and so up to passing, if necessary, to a further subsequence, we can find $\hat{n} \in L^2(\Omega)$ such that $m_j * \rho_{\kappa_j} \rightharpoonup \hat{n}$ weakly in $L^2(\Omega)$. But then,  for any $\phi \in C_c^{\infty}(\R^d)$, 
    %\begin{align*}
    %    \int_{\Omega} \phi \,  \hat{n} \,  dx &= \lim_{j \to \infty} \int_{\R^d} \phi \, m_j * \rho_{\kappa_j} dx = \lim_{j \to \infty} \int_{\Omega} \phi * \rho_{\kappa_j} dm_j = \lim_{j \to \infty} \Big( \int_{\Omega} \phi dm_j + \int_{\Omega}\big( \phi - \phi * \rho_{\kappa_j} \big) dm_j \Big) 
     %   \\
     %   &= \lim_{j \to \infty} \int_{\Omega} \phi d m_j = \int_{\ov{\Omega}} \phi d\hat{m}.
    %\end{align*}
    %It follows  that $\hat{m} = \hat{n} \in L^2(\Omega)$, and, in particular,  $\hat{m}(\partial \Omega) = 0$. 
    %
    It is straightforward to check (using compactness arguments similar to those appearing in the proof of Proposition \ref{prop.equiv}) that any limit point $(\hat{t},\hat{m}) \in [0,T] \times \sub(\ov{\Omega})$ of $(t_{\kappa},m_{\kappa})$  with respect to weak-$*$ convergence is in fact in $[0,T] \times (\sub(\Omega) \cap L^2(\Omega))$, and satisfies
    \begin{align*}
        \|\hat{m}\|_2^2 \leq \liminf_{j \to \infty} \|m_j * \rho_{\kappa_j}\|_2^2, 
    \end{align*}
    where $\kappa_j \downarrow 0$ is a sequence such that $(t_{\kappa_j}, m_{\kappa_j}) \to (\hat{t},\hat{m})$. 
    Together with the optimality of $(t_j,m_j)$ for \eqref{kappaproblem} (with $\kappa = \kappa_j$) these two observations are  enough to ensure that $(\hat{t}, \hat{m})$ is an optimizer for the problem which defines $(t_0,m_0)$. Since $(t_0,m_0)$ was assumed to be the unique minimizer  \eqref{touchingfromabove}, we conclude that $(\hat{t},\hat{m}) = (t_0,m_0)$, hence,  $(t_{\kappa_j},m_{\kappa_j}) \to (t_0,m_0)$ in weak-$*$. 
    
    Finally, since this holds for any sequence $\kappa_j \to 0$, it follows  that any limit point of $t_{\kappa}$ must be $t_0$, which implies  that, for sufficiently small $\kappa$,
    
    %,m_{\kappa}) $ converges to . In particular, since $t_0 < T$ and $m_0 \neq \bzero$, this implies that there exists a constant $\kappa_0 > 0$ such that for all $\kappa < \kappa_0$, any optimizer $(t_{\kappa}, m_{\kappa})$ for \eqref{kappaproblem} satisfies
    \begin{align} \label{tkappa.terminal}
        t_{\kappa} < T.
    \end{align}
    \noindent     
    \textit{Step 2 - sending $N \to \infty$ with $\kappa$ fixed. }
    We next note that we can view $V$ and $\Phi$ as maps on $[0,T] \times \sub(\ov{\Omega})$ by setting, for $(t,m) \in [0,T] \times \sub(\ov{\Omega})$,
    \begin{align*}
        \ov{V}(t,m) = V(t,m|_{\Omega}) \text{ and} \  \ov{\Phi}(t,m) = \Phi(t,m|_{\Omega}),
        %\quad (t,m) \in [0,T] \times \sub(\ov{\Omega}),
    \end{align*}
   which implies  $\ov{V}$ and $\ov{\Phi}$ are continuous with respect to the weak-$*$ topology. 
   
   Next, for $N \in \N$ and $K\in \{0,1,\ldots, N\}$,  let $(K_N, t_N,\bx_N)$ be an optimizer of
    \begin{align*}
        \max_{K = 0,...,N} \sup_{(t,\bx) \in \ov{\Omega}^K} \Big\{ V^{N,K}(t,\bx)- \ov{\Phi}(t,m_{\bx}^{N,K}) - \delta \|m_{\bx}^{N,K} * \rho_{\kappa}\|_2^2 \Big\}.
    \end{align*}
     If  $m_N = m_{\bx_N}^{N,K_N}$,  by compactness $(t_N,m_N)$ converges in weak-$*$ along a subsequence, which we do not relabel for simplicity, to some $(t_{\kappa},m_{\kappa}) \in [0,T] \times \sub(\ov{\Omega})$. 
    
    It is now easy to check from the uniform convergence of the $V^{N,K}$'s that $(t_{\kappa}, m_{\kappa})$ must be an optimizer for the problem
    \begin{align}
        \label{kappaproblem2}
         \sup_{(t,m) \in [0,T] \times  \sub(\ov{\Omega})} \Big\{ \ov{V}(t,m) - \ov{\Phi}(t,m) - \delta \|m * \rho_{\kappa} \|_2^2 \Big\}, 
    \end{align}
    It is also clear that any optimizer for $\eqref{kappaproblem2}$ in fact satisfies $m_{\kappa} \in \sub(\Omega)$, that is, $m_{\kappa}(\partial \Omega) = 0$, and is also an optimizer for \eqref{kappaproblem}.

    In particular, since the $(t_N,m_N)$'s are  converging,  along a subsequence which we do not relabel,  to some minimizer $(t_{\kappa},m_{\kappa})$ of \eqref{kappaproblem}, we obtain from \eqref{tkappa.terminal} that, for all $N$ large enough, $t_N < T$.

    In addition, we note that, for each $N \in \N$, we must have $\bx_N^i \in \Omega$ for each $i = 1,...,K_N$. Indeed, if $x_N^i \in \partial \Omega$ for some $i$, then
    \begin{align*}
        &V^{N,K_N - 1}(t_N,\bx_N^{-i}) = V^{N, K_N}(t_N,\bx_N), \quad \ov{\Phi}(t,m_{\bx_N^{-i}}^{N,K_N - 1}) = \ov{\Phi}(t,m_{\bx_N}^{N,K_N}), \text{  and  } \\
        &\|m_{\bx_N^{-i}}^{N,K_N-1} * \rho_{\kappa}\|_2^2  < \|m_{\bx_N}^{N,K_N} * \rho_{\kappa}\|_2^2, 
    \end{align*}
   a fact that implies that 
    \begin{align*}
        V^{N,K_N-1}(t_N,\bx_N) - \ov{\Phi}(t_N,m_{\bx_N}^{N,K_N - 1}) - \delta \|m_{\bx}^{N,K_N - 1} * \rho_{\kappa}\|_2^2 > V^{N,K_N}(t,\bx)- \ov{\Phi}(t,m_{\bx}^{N,K_N}) - \delta \|m_{\bx}^{N,K_N} * \rho_{\kappa}\|_2^2, 
    \end{align*}
    and hence a contradiction to the optimality of $(K_N,t_N,\bx_N)$. 
    
    Since $t_N < T$, we can now apply the subsolution property of $V^{N,K_N}$ for $K_N \in\{1,\ldots, N\}$ or the \\
    fact that $V^{N,0}(t) = G(\bzero) - (T-t) F(\bzero)$ if $K_N = 0$ to find that, for all $N$ large enough, we have
    \begin{align} \label{subsol.f}
        - \partial_t f(t_N,\bx_N) - \sum_{i = 1}^{K_N} \Delta_{x^i} f(t,\bx_N) + \frac{1}{N} \sum_{i = 1}^{K_N} H^R\big( x_N^i, ND_{x^i} f(t_N,\bx_N) \big) \leq F(m_{\bx_N}^{N,K_N}), 
    \end{align}
    where $f : [0,T] \times \Omega^{K_N} \to \R$ is given by $f(t,\bx) = \Phi(t,m_{\bx}^{N,K_N}) - \delta \| m_{\bx}^{N,K} * \rho_{\kappa}\|_2^2.$
    Using Lemma \ref{lem.projection}, which is stated and proved after  the end of the ongoing proof, to compute the derivatives of $f$, we obtain from \eqref{subsol.f} the inequality
    \begin{equation}\label{discrete.subsoltest}
    %\begin{align} \label{discrete.subsoltest}
    \begin{split}
         &- \partial_t \Phi(t_N,m_N) - \int_{\Omega} \Delta_x \frac{\delta \Phi}{\delta m}(t_N,m_N, y) dm_{N}(y) + 2 \delta  \int_{\R^d} |D \rho_k * m_N|^2 dy 
        \\ 
        &\qquad + \int_{\Omega} H^R\Big(y, D_x \frac{\delta \Phi}{\delta m}(t_N, m_N, y) + 2\delta \big( D \rho_{\kappa} * \rho_{\kappa} * m_N \big)(y) \Big) dm_N(y) \leq F(m_N) + \frac{C_{\kappa}}{N}, 
    \end{split}
    \end{equation}
    with $C_{\kappa}$ a constant which can depend on $\kappa$ but not on $N$. 
     
    Now, we recall that we have, again up to a subsequence which we have not relabeled, that, as $N\to \infty$  and weakly-$*$,
    \begin{align*}
        (t_N,m_N) \rightharpoonup (t_{\kappa},m_{\kappa}). %  \  \text{weakly}-$*$}.
    \end{align*}
    It also follows from  the definition of the  smooth test functions, that,  as $N\to \infty$ and uniformly, 
    \begin{align*}
       &D_x \frac{\delta \Phi}{\delta m}(t_{N}, m_{N}, \cdot) \to D_x \frac{\delta \Phi}{\delta m}(t_{\kappa}, m_{\kappa}, \cdot) \ \text{and} \  \Delta_x \frac{\delta \Phi}{\delta m}(t_{N}, m_{N}, \cdot) \to \Delta_x \frac{\delta \Phi}{\delta m}(t_{\kappa}, m_{\kappa}, \cdot).
    \end{align*}
    Finally, it is clear that, as $N\to \infty$ and uniformly, 
    \begin{align*}
       &\qquad \qquad \qquad \qquad D \rho_k * m_{N_j} \to  D \rho_{\kappa} * m_{\kappa}.
    \end{align*}
   Thus we can pass to the $N\to$ limit in \eqref{discrete.subsoltest} to find that $(t_{\kappa}, m_{\kappa}) \in [0,T] \times \sub$ satisfies 
    
    \begin{equation} \label{subsoltest.eta}
    \begin{split}
        &- \partial_t \Phi(t_{\kappa},m_{\kappa}) - \int_{\Omega} \Delta_x \frac{\delta \Phi}{\delta m}(t_{\kappa},m_{\kappa}, y) dm_{N}(y) + 2\delta \int_{\R^d} |D \rho_k * m_{\kappa}|^2 dy 
        \\ 
        &\qquad + \int_{\Omega} H^R\Big(y, D_x \frac{\delta \Phi}{\delta m}(t_{\kappa}, m_{\kappa}, y) + 2\delta \big( D \rho_{\kappa} * \rho_{\kappa} * m_{\kappa} \big)(y) \Big) dm_{\kappa}(y) \leq F(m_{\kappa}).
    \end{split}
    \end{equation}
    
    Using  the Lipschitz continuity of $H^R$ with a constant $C_R$, we have
    \begin{align*}
        \int_{\Omega} &H^R\Big(y, D_x \frac{\delta \Phi}{\delta m}(t_{\kappa}, m_{\kappa}, y) + 2 \delta \big( D \rho_{\kappa} * \rho_{\kappa} * m_{\kappa} \big)(y) \Big) dm_{\kappa}(y) 
        \\
        & \leq \int_{\Omega} H^R\Big(y, D_x \frac{\delta \Phi}{\delta m}(t_{\kappa}, m_{\kappa}, y) \big)(y) \Big) dm_{\kappa}(y) + 2C_R  \delta \int_{\Omega} \Big|  \rho_{\kappa} * D \rho_{\kappa} * m_{\kappa}  \Big| dm_{\kappa}(y)
        \\
        &\leq \int_{\Omega} H^R\Big(y, D_x \frac{\delta \Phi}{\delta m}(t_{\kappa}, m_{\kappa}, y) \big)(y) \Big) dm_{\kappa}(y) + 2C_R \delta  \int_{\Omega} \rho_{\kappa} * \Big|   D \rho_{\kappa} * m_{\kappa}  \Big| dm_{\kappa}(y)
        \\
        &= \int_{\Omega} H^R\Big(y, D_x \frac{\delta \Phi}{\delta m}(t_{\kappa}, m_{\kappa}, y) \big)(y) \Big) dm_{\kappa}(y) + 2C_R \delta \int_{\Omega} \Big|   D \rho_{\kappa} * m_{\kappa}  \Big| \rho_{\kappa} * m_{\kappa}(y) dy
        \\
        &\leq \int_{\Omega} H^R\Big(y, D_x \frac{\delta \Phi}{\delta m}(t_{\kappa}, m_{\kappa}, y) \big)(y) \Big) dm_{\kappa}(y) + \delta  \int_{\Omega} \big| D \rho_{\kappa} * m_{\kappa}(y) \big|^2 dy + C_R^2 \delta  \int_{\Omega} |\rho_{\kappa} * m_{\kappa}(y)|^2 dy.
    \end{align*}
%    where $C_R$ is the Lipschitz constant of $H^R$ is the second variable. 
    
    Coming back to \eqref{subsoltest.eta}, we find that, in fact, $(t_{\kappa}, m_{\kappa})$ satisfies 
    \begin{equation} \label{subsoltest.eta.2}
    \begin{split}
  % \begin{align} \label{subsoltest.eta.2}
         &- \partial_t \Phi(t_{\kappa},m_{\kappa}) - \int_{\Omega} \Delta_x \frac{\delta \Phi}{\delta m}(t_{\kappa},m_{\kappa}, y) dm_{\kappa}(y) + \delta \int_{\R^d} |D \rho_k * m_{\kappa}|^2 dy 
        \\ 
        &\qquad + \int_{\Omega} H^R\Big(y, D_x \frac{\delta \Phi}{\delta m}(t_{\kappa}, m_{\kappa}, y) \Big) dm_{\kappa}(y) \leq F(m_{\kappa}) + C_R^2 \delta \|m* \rho_{\kappa}\|_2^2.
   \end{split}
    \end{equation}
   We have thus shown that for $\kappa > 0$ small enough, there exists a pair $(t_{\kappa}, m_{\kappa}) \in [0,T) \times \sub$ which is an optimizer  for \eqref{kappaproblem}, and such that \eqref{subsoltest.eta.2} holds. 
\newline \newline     
    \textit{Step 3 - sending $\kappa \to 0$.}     %Next, we aim to send $\kappa \to 0$. 
    From Step 1, we know that $(t_{\kappa},m_{\kappa}) \to (t_0,m_0)$ in weak-$*$. Moreover, because $(t_{\kappa},m_{\kappa})$ is an optimizer for \eqref{kappaproblem}, we have that the $\rho_{\kappa} * m_{\kappa}$'s are bounded in $L^2$, and then, because of \eqref{subsoltest.eta.2}, we see that in fact the $\rho_{\kappa} * m_{\kappa}$'s are  bounded in $H_0^1(\R^d)$. The key observation here is that both bounds are  uniform in $\kappa$. 
    
    Arguing as in Step 1, we see that any limit point of $\rho_{\kappa} * m_{\kappa}$ with respect to the weak topology on $H_0^1(\R^d)$ must coincide with $m_0$, and so in fact we have that, in the limit $\kappa \to 0$, 
   % \begin{align} \label{eta.conv}
%   \begin{equation}\label{eta.conv}
%    \begin{split}
%       & t_{\eta} \to t_0,\\
%       & m_{\eta} \to  m_0 \ \text{weakly-$*$  in  $\sub(\ov{\Omega})$ },\\  
%        %\quad \rho_{\eta} * m_{\eta} \to m_0 \ \text{weakly in } H_0^1(\R^d)}.
%    &\rho_{\eta} *  m_{\eta} \to m_0 \ \text{weakly in } \  H_0^1(\R^d).
%    \end{split}
%    \end{equation}
 \begin{equation}\label{eta.conv}
  t_{\kappa} \to t_0, \ \ m_{\kappa} \to  m_0 \ \text{weakly-$*$  in  $\sub(\ov{\Omega})$ }, \ \  \rho_{\kappa} * m_{\kappa} \to m_0 \ \text{weakly in } \  H_0^1(\R^d)
  \end{equation}
    Next, we note that,  because $m_0 \in H_0^1(\R^n)$ and $m_0 = 0$ on $\Omega^c$, we have $m_0 \in H_0^1(\Omega)$ and by the weak convergence in the limit $\kappa\to 0$,
    \begin{align*}
        \int_{\Omega} |D m_0|^2 dx \leq \liminf_{\kappa \to 0} \int_{\R^d} |D \rho_{\kappa} * m_{\eta}|^2 dx. 
    \end{align*}
    Now using \eqref{eta.conv}, we can pass to the limit in \eqref{subsoltest.eta.2} to find that 
   \begin{equation} \label{subsoltest}
   \begin{split}
         &- \partial_t \Phi(t_0,m_0) - \int_{\Omega} \Delta_x \frac{\delta \Phi}{\delta m}(t_0,m_0, y)m_0(y) dy + \int_{\R^d} |D m_{0}|^2 dy 
        \\ 
        &\qquad + \int_{\Omega} H^R\Big(y, D_x \frac{\delta \Phi}{\delta m}(t_0,m_0, y) \Big) m_{0}(y) dy \leq F(m_{0}) + C_R^2 \delta \|m_0\|_2^2, 
    \end{split}
    \end{equation}
    Finally, using the fact that $m_0 \in H_0^1(\Omega)$, we can integrate by parts to find that 
    \begin{align*}
       \int_{\Omega} \Delta_x \frac{\delta \Phi}{\delta m}(t_0,m_0, y)m_0(y) dy = - \int_{\Omega} D_x \frac{\delta \Phi}{\delta m}(t_0,m_0, y) \cdot Dm_0(y) dy, 
    \end{align*}
    which completes the proof.  
    \end{proof}

We complete the section with some  technical facts that were used in the previous proof.

\begin{lemma}
\label{lem.projection}
Let $\Phi$ be a smooth test function, $\delta, \kappa > 0$, $N \in \N$, $K \in \{1,...,N\}$, and define 
\begin{align*}
   \Phi^{N,K} : [0,T] \times \Omega^K \to \R, \quad \Psi^{N,K} : \Omega^K \to \R
\end{align*}
by 
\begin{align*}
    \Phi^{N,K}(t,\bx) = \Phi(t,m_{\bx}^{N,K}), \quad \Psi^{N,K}(t,\bx) = \|m_{\bx}^{N,K} * \rho_{\kappa} \|_2^2.
\end{align*}
Then $\Phi^{N,K}, \Psi^{N,K} \in C^{1,2}([0,T] \times \Omega^K)$, and satisfies 
\begin{align*}
    &\partial_t \Phi^{N,K}(t,\bx) = \partial_t \Phi(t,m_{\bx}^{N,K}), 
    \\[1.5mm]
    &D_{x^i} \Phi^{N,K}(t,\bx) = \frac{1}{N} D_x \frac{\delta \Phi}{\delta m}(t,m_{\bx}^{N,K}, x^i), 
    \\[1.5mm]
    &\Delta_{x^i} \Phi^{N,K}(t,\bx) = \frac{1}{N} \Delta_x \frac{\delta \Phi}{\delta m}(t,m_{\bx}{N,K},x^i) + \frac{1}{N^2} \tr\big( D_{x,y} \frac{\delta \Phi}{\delta m} (t,m_{\bx}^{N,K}, x^i,x^i) \big).
\end{align*}
and 
    \begin{align*}
        &D_{x^i} \Psi^{N,K}(\bx) = \frac{2}{N} \Big( D\rho_{\kappa} * \rho_{\kappa} * m_{\bx}^{N,K} \Big) (x^i), 
        \\[1.5mm]
        &\Delta_{x^i} \Psi^{N,K}(t,\bx) =  \frac{2}{N} \Big( D\rho_{\kappa} * D\rho_{\kappa} * m_{\bx}^{N,K} \Big) (x^i) + \frac{2}{N^2} \|D \rho_{\kappa}\|_2^2, 
        \\[1.5mm]
        &\sum_{i = 1}^N \Delta_{x^i} \Psi^{N,K}(t,\bx) = - 2 \| D \rho_{\kappa} * m_{\bx}^{N,K} \|_2^2 + \frac{2}{N} \|D \rho_{\kappa}\|_2^2.
    \end{align*}

\end{lemma}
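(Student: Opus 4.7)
The proof is a pure chain-rule computation, which I would organize into three blocks matching the three identities.

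For $\Phi^{N,K}$ I would start from the integral representation of the linear functional derivative,
\[
\Phi(t,m') - \Phi(t,m) = \int_0^1 \int_\Omega \frac{\delta \Phi}{\delta m}\bigl(t,sm'+(1-s)m,y\bigr)(m'-m)(dy)\,ds,
\]
and specialize it to $m = m_{\bx}^{N,K}$ and $m' = m_{(x^1,\ldots,x^i+hv,\ldots,x^K)}^{N,K}$, so that $m'-m = \tfrac{1}{N}(\delta_{x^i+hv}-\delta_{x^i})$. Dividing by $h$, letting $h\to 0$, and using the continuity of $\frac{\delta \Phi}{\delta m}$ and the $C^2$-regularity of $\frac{\delta \Phi}{\delta m}(t,m,\cdot)$ (both from Definition~\ref{def.testfunction}), I obtain $D_{x^i}\Phi^{N,K}(t,\bx) = \frac{1}{N} D_x \frac{\delta \Phi}{\delta m}(t,m_{\bx}^{N,K},x^i)$. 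The formula for $\partial_t \Phi^{N,K}$ is immediate. For the Laplacian I differentiate this first-order identity once more: the spatial $x^i$ in the third argument of $\frac{\delta \Phi}{\delta m}$ gives the term $\frac{1}{N}\Delta_x \frac{\delta \Phi}{\delta m}(t,m_{\bx}^{N,K},x^i)$, while the dependence of $m_{\bx}^{N,K}$ on $x^i$, processed via the same chain rule applied to the functional $m \mapsto \frac{\delta \Phi}{\delta m}(t,m,y)$ (whose linear derivative is $\frac{\delta^2 \Phi}{\delta m^2}$), produces the remaining $\tfrac{1}{N^2}\operatorname{tr}\bigl(D_{x,y}\frac{\delta^2 \Phi}{\delta m^2}(t,m_{\bx}^{N,K},x^i,x^i)\bigr)$ term. (I read the $\frac{\delta \Phi}{\delta m}$ in that last trace as a typo for $\frac{\delta^2\Phi}{\delta m^2}$.)

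For $\Psi^{N,K}$ I would treat it as a purely finite-dimensional object by writing
\[
\Psi^{N,K}(\bx) = \int_{\R^d} h(y)^2\,dy, \qquad h(y) := \bigl(\rho_\kappa * m_{\bx}^{N,K}\bigr)(y) = \frac{1}{N}\sum_{j=1}^K \rho_\kappa(y-x^j),
\]
so that $\partial_{x^i_\alpha} h(y) = -\tfrac{1}{N}\partial_\alpha \rho_\kappa(y-x^i)$. Then $\partial_{x^i_\alpha}\Psi^{N,K} = 2\int h\,\partial_{x^i_\alpha} h\,dy = -\tfrac{2}{N}\int h(y)\,\partial_\alpha \rho_\kappa(y-x^i)\,dy$; the change of variables $v = x^i - y$ combined with the evenness of $\rho_\kappa$ (equivalently, oddness of $D\rho_\kappa$) turns this into $\tfrac{2}{N}\partial_\alpha(\rho_\kappa * h)(x^i) = \tfrac{2}{N}(\partial_\alpha \rho_\kappa * \rho_\kappa * m_{\bx}^{N,K})(x^i)$. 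Differentiating one more time in $x^i$ produces two contributions: an $\int \partial_{x^i_\alpha} h \otimes \partial_{x^i_\alpha} h\,dy$ piece which, by translation invariance of $\|D\rho_\kappa\|_2^2$, contributes the $\tfrac{2}{N^2}\|D\rho_\kappa\|_2^2$ term after taking trace; and an $\int h\, \partial^2_{x^i_\alpha} h\,dy$ piece which, after the same evenness manipulation, contributes $\tfrac{2}{N}(D\rho_\kappa * D\rho_\kappa * m_{\bx}^{N,K})(x^i)$ (where $D\rho_\kappa * D\rho_\kappa$ is interpreted as $\sum_\alpha \partial_\alpha \rho_\kappa * \partial_\alpha \rho_\kappa$).

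For the last, summed identity, the key observation is the translation-invariance relation $\sum_{i=1}^K \Delta_{x^i} h(y) = N\Delta_y h(y)$ together with $\sum_{i=1}^K D_{x^i} h(y) \otimes D_{x^i} h(y) = \tfrac{1}{N^2}\sum_i D\rho_\kappa(y-x^i)\otimes D\rho_\kappa(y-x^i)$. Inserting these into $\sum_i \Delta_{x^i}\Psi^{N,K} = 2\sum_i \int \bigl(|D_{x^i} h|^2 + h\,\Delta_{x^i} h\bigr)dy$ and performing a single global integration by parts on $\int h\,\Delta_y h\,dy = -\|Dh\|_2^2 = -\|D\rho_\kappa*m_{\bx}^{N,K}\|_2^2$ (valid since $h$ has compact support in $\R^d$) yields the claimed formula, with prefactor $\tfrac{2K}{N^2}\|D\rho_\kappa\|_2^2$; this reduces to the displayed $\tfrac{2}{N}\|D\rho_\kappa\|_2^2$ when $K=N$, so the written $\sum_{i=1}^N$ should be $\sum_{i=1}^K$. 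There is no real obstacle here: everything is elementary calculus and the only point deserving care is the systematic conversion of integrals of the form $\int f(y)\,\varphi(y-x^i)\,dy$ into convolutions evaluated at $x^i$, which depends on the parity of $\varphi$ (even for $\rho_\kappa$ and $\Delta\rho_\kappa$, odd for $D\rho_\kappa$).
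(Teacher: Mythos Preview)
Your approach is correct and essentially identical to the paper's: the paper simply cites \cite[Proposition 6.30]{CarmonaDelarue_book_II} for the $\Phi^{N,K}$ identities and carries out the same direct computation for $\Psi^{N,K}$ that you outline, writing $\Psi^{N,K}(\bx)=\int|\tfrac{1}{N}\sum_j\rho_\kappa(x^j-y)|^2\,dy$ and differentiating under the integral. One small slip: your translation-invariance relation should read $\sum_{i=1}^K \Delta_{x^i} h = \Delta_y h$ (not $N\Delta_y h$), but your stated final formula is correct, and you rightly catch both the $\frac{\delta\Phi}{\delta m}$ versus $\frac{\delta^2\Phi}{\delta m^2}$ and the $\sum_{i=1}^N$ versus $\sum_{i=1}^K$ typos in the statement.
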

\begin{proof}
    The expressions for the derivatives of $\Phi^{N,K}$ can be deduced from, for example,  \cite[Proposition 6.30]{CarmonaDelarue_book_II}, and the expressions for the derivatives of $\Psi^{N,K}$ can be obtained by explicit computation. We omit the details. 
 \end{proof}

\section{The properties of $U$.} \label{sec.u}

For technical reasons, for each $R > 0$ we introduce the value function $U^R : [0,T] \times \sub \to \R$, defined exactly like $U$, except with the additional constraint that $|\alpha(t,x)| \leq R$, that is, for each 
\begin{equation}\label{Rvaluefunction}
U^R= \inf_{(m,\alpha)\in \mathcal A^R(t_0,m_0)} \bigg\{ \int_{t_0}^T \Big( \int_{\Omega} L\big( x, \alpha(t,x) \big)m_t(dx) + F(m_t) \Big) dt + G(m_T) \bigg\},
\end{equation}
where $ \mathcal A^R(t_0,m_0)= \{(m,\alpha)\in \mathcal A^R(t_0,m_0) : | \alpha(t,x)| \leq R \text{ for all } (t,x)\}$.

\subsection{The Fokker-Planck equation and its dual} 
In order to investigate the properties of $U$ and $U^R$, it will be useful to collect a few fairly straightforward facts about the Fokker-Planck-type initial boundary bound problem 
\begin{align} \label{eq.FP}
    \partial_t m = \Delta m - \text{div}(m \alpha) \ \text{ in}\  (t_0,T) \times \Omega, \quad m_{t_0} = m_0, \quad m_t|_{\partial \Omega} = 0.
\end{align}
Because we will be making arguments about \eqref{eq.FP} via duality, it will also be useful to study the backwards problem
\begin{align} \label{dualeqn}
    \partial_t \phi + \Delta \phi + \alpha \cdot D \phi = \beta, \text{  in  } (t_1,t_2) \times \Omega, \quad \phi(t_2,x) = f(x), \quad \phi|_{[t_1,t_2] \times \partial \Omega} = 0.
\end{align}
Here $0 \leq t_1 < t_2 \leq T$, and $\alpha : [t_1,t_2] \times \Omega \to \R^d$, $\beta : [t_1,t_2] \times \Omega \to \R$, and $f : \Omega \to \R$ are given. We fix $p > d + 2$ throughout this section. We begin with a lemma about \eqref{dualeqn}. 
\begin{lemma} \label{lem.duality}
    Suppose that $f \in C_c^{\infty}(\Omega)$ and $\alpha, \beta \in L^{\infty}$. Then there is a unique strong solution to \eqref{dualeqn} in the parabolic Sobolev space $W_{t,x}^{2,p}([t_1,t_2] \times \Omega)$, which satisfies the bound 
    \begin{align} \label{dual.apriori}
        |\phi(t,x) - \phi(s,y)| \leq C\big(|x-y| + |t-s|^{1/2}\big), \quad C = C(\|\alpha\|_{\infty}, \|\beta\|_{\infty}, \|D f\|_{\infty}). 
    \end{align}
    Moreover, if $m \in C([t_0,T] ; \sub(\Omega))$ is any weak solution to \eqref{dualeqn} for $0 \leq t_0 \leq t_1$, then we have 
    \begin{align} \label{duality.nonsmooth}
        \int_{\Omega} f(x) m_{t_2}(dx) = \int_{\Omega} \phi(t_1,x) m_{t_1}(dx) + \int_{t_1}^{t_2} \beta(t,x)  m_t(dx) dt.
    \end{align}
\end{lemma}
\begin{proof}
We start by assuming that we have a solution in to \eqref{dualeqn}, with $\alpha$ and $\beta$ bounded. Let $(P_t)_{t \geq 0}$ be the heat semigroup on $\Omega$ with Dirichlet boundary conditions. Then, Duhamel's  formula gives 
   \begin{align*}
       \phi(s, \cdot) = P_{t_2-s} f + \int_s^t P_{r-s} \big( \alpha(r,\cdot) \cdot D\phi(r,\cdot) \big) dr - \int_s^t P_{r-s} \beta(r,\cdot) dr.
   \end{align*}
Using the smoothing effect of the heat semigroup  (see, for example, \cite[Theorem 1.1]{FurutoIwabuchi} for the relevant result on a bounded domain with $C^3$ boundary), we get 
   \begin{align*}
       \|D \phi(s,\cdot)\|_{\infty} \leq C \Big( \|D f \|_{\infty} + \|\alpha \|_{\infty} \int_s^t \frac{1}{\sqrt{r-s}} \|D \phi (r,\cdot)\|_{\infty}dr + \|\beta\|_{\infty} \int_s^t \frac{1}{\sqrt{r-s}} dr \Big), 
   \end{align*}
   which implies  (by an extension of Gr\"onwall's lemma from \cite{YE20071075}, see \cite[Lemma A.1]{DJS2025} for the precise version we need)  that
    \begin{align} \label{dphi.est}
        \|D\phi \|_{\infty} \leq C(\|\alpha\|_{\infty}, \|\beta\|_{\infty}) \|Df \|_{\infty}. 
    \end{align} 
    The H\"older regularity in time can also be easily inferred from the Duhamel formula, which leads to \eqref{dual.apriori}. With the bound on $\|D \phi\|_{\infty}$ in hand, $\phi$ solves the heat equation with a bounded forcing term; thus the a-priori estimate 
    \begin{align*}
        \| \phi \|_{W_{t,x}^{2, p}} = \| \phi\|_{L_{t,x}^p} + \|\partial_t \phi \|_{L_{t,x}^p} + \|D \phi \|_{L_{t,x}^p} + \|D^2 \phi\|_{L_{t,x}^p} \leq C = C\big(\|\alpha\|_{\infty}, \|\beta\|_{\infty}, \|f \|_{C^2} \big)
    \end{align*}
    follows from the Calder\'on Zygmund estimates for the heat equation (see e.g. \cite{waterswang}). Existence and uniqueness for \eqref{dualeqn} can both be obtained from this a-priori estimate. 

    The duality relationship \eqref{duality.nonsmooth} would follow immediately from the definitions, except that because $\alpha, \beta$ are just bounded and measurable, the solution of \eqref{dualeqn} may not be $C^{1,2}$. To circumvent this issue, we mollify $\alpha$ and $\beta$ to produce smooth functions $\alpha^{\eps}, \beta^{\eps}$ such  that $\alpha^{\eps}$ and $\beta^{\eps}$ are bounded uniformly in $\eps$ and $\alpha^{\eps} \to \alpha$, $\beta^{\eps} \to \beta$ almost everywhere. We define $\phi^{\eps}$ exactly like $\phi$, except with $\alpha^{\eps}$ replacing $\alpha$ and $\beta^{\eps}$ replacing $\eta$. Then, since $\partial \Omega$ is $C^3$, $\alpha^{\eps}$ and $\beta^{\eps}$ are smooth, and $f \in C_c^{\infty}(\Omega)$, $\phi^{\eps}$ is smooth enough to act as a test function in the equation for $m$. We thus get 
\begin{align*}
    \int f d{m_{t_2}} - \int \phi^{\eps}(t_1,x) m_{t_2}(dx) = \int_{t_1}^{t_2} \int_{\Omega} (\alpha - \alpha^{\eps}) D\phi^{\eps} m_t(dx) dt + \int_{t_1}^{t_2} \int_{\Omega} \beta^{\eps} m_t(dx) dt.
\end{align*}
We know from \eqref{dphi.est} that $D \phi^{\eps}$ is bounded independently of $\eps$. Moreover, the measure $m_t(dx) dt$ has a density with respect to the Lebesgue measure (see e.g. \cite[Corollary 6.3.2]{bogachevkrylovrocker}). Thus, we can pass to the limit by the dominated convergence theorem to deduce \eqref{duality.nonsmooth}. 
\end{proof}

\begin{proposition} \label{prop.FP}
    Suppose that $t_0 \in [0,T)$, $m_0 \in \sub$, and $\alpha : [t_0,T] \times \R^d \to \R^d$ is a measurable and bounded function. Then   \eqref{eq.FP} has a unique distributional solution in the space $C([t_0,T] ; \sub)$ with continuity understood with respect to $\bd$. Moreover, for any smooth test function $\Phi$ in the sense of Definition \ref{def.testfunction}, and for any $h \in (0,T-t_0]$,
    \begin{equation} \label{Phi.expansion}
    \begin{split}
        \Phi(&t_0 + h, m_{t_0 + h}) - \Phi(t_0,m_0) = \int_{t_0}^{t_0+h} \partial_t \Phi(t,m_t) dt 
     \\
        &\int_{t_0}^{t_0+h}\int_{\Omega} \Big( \Delta_x \frac{\delta \Phi}{\delta m}(m_t,x)m_t(x) + D_x \frac{\delta \Phi}{\delta m}(m_t,x) \cdot \alpha(x) \Big) m_t(dx) dt
    \end{split}
    \end{equation}
    
    %for any $h \in (0,T-t_0]$.
    Finally, if $m_0 \in L^2 \cap \sub$, then we further have $m \in L_t^{\infty} L_x^2 \cap L_t^2 H_0^1$, and, for any $h \in (0,T-t_0]$, 
    \begin{align} \label{l2.decrease}
      \|m_{t_0+h}\|_2^2 - \|m_{0}\|_2^2 = -2\int_{t_0}^{t_0 + h} \int_{\Omega} |Dm_t|^2 dx dt + 2 \int_{t_0}^{t_0 + h} \int_{\Omega} Dm_t \cdot \alpha \, m_t dx dt.
    \end{align}
    %for any $h \in (0,T-t_0]$.
\end{proposition}
\begin{proof}

Uniqueness follows directly from Lemma \ref{lem.duality} (taking $t_1 = t_0$ in \eqref{duality.nonsmooth}). Moreover, the H\"older regularity of $\phi$ in Lemma \ref{lem.duality} together wit the duality result \eqref{duality.nonsmooth} leads to an a-priori estimate: any solution $m \in C([t_0,T] \times \sub)$ to \eqref{eq.FP} satisfies
\begin{align*}
    \bd( m_{t_1},m_{t_2}) \leq C |t-s|^{1/2}.
\end{align*}
This a-priori estimate in turn leads existence by a compactness argument; we first mollify $m_0$ and $\alpha$, solve the resulting equations (which have classical solutions), and extract a subsequential limit in the pace $C([t_1,t_2]; \sub)$ via the Arzel\`a-Ascoli compactness criterion. The expansion \eqref{Phi.expansion}; meanwhile, is a straightforward extension of \cite[Theorem 5.99]{CarmonaDelarue_book_I}.

A more subtle point is the equality \eqref{l2.decrease} when $m_0 \in L^2$, which we now check. The strategy is to  approximate \eqref{eq.FP} by a sequence of smoother problem whose solutions satisfy \eqref{l2.decrease} and have the necessary compactness properties to pass in the limit to get \eqref{l2.decrease} for 
$m_0 \in L^2$.

We  choose a sequence $(m_0^{\eps})_{\eps >0}$  in $\sub \cap C_c^{\infty}(\Omega)$ such that, when $\eps \to 0$, $m_0^{\eps} \to m_0$ in $L^2$, and a sequence of smooth maps $\alpha^{\eps} : [t_0,T] \times \Omega \to \R^d$ which are  bounded in $L^{\infty}$, uniformly in $\eps$,  and, when $\eps\to 0$,  $\alpha^{\eps} \to \alpha$ in $L^p$ for any $p < \infty$. 

%Using Sobolev imbeddings 
The solution  $m^{\eps}$ of the initial boundary problem
    \begin{equation} \label{eq.FPeps}
    \begin{split}
    \partial_t m^{\eps} = \Delta m^{\eps} - \text{div}(m^{\eps} \alpha) \ \text{in} \ [t_0,T] \times \Omega, \quad m^{\eps}_{t_0} = m^{\eps}_0, \quad   m^{\eps}_t|_{\partial \Omega} = 0
\end{split}
\end{equation}
is smooth, and,  satisfies the identity %rgaafter  integrating  by parts, 
 \begin{equation} \label{l2.equality}
      \|m^{\eps}_{t_0+h}\|_2^2 - \|m^{\eps}_{0}\|_2^2 = -2\int_{t_0}^{t_0 + h} \int_{\Omega} |Dm^{\eps}_t|^2 dx dt + 2 \int_{t_0}^{t_0 + h} \int_{\Omega} Dm_t \cdot \alpha^{\eps} \, m^{\eps}_t dx dt.
    \end{equation}
    
 Elementary manipulations (Cauchy-Schwarz inequality and integration by parts)  and  Sobolev imbeddings establish that  the $m^{\eps}$'s are  bounded in $L_t^{\infty} L_x^2 \cap L_t^2 H_0^1$ uniformly in $\eps$, and,  hence,  in $L_t^2 L_x^p$, with $p  > 2$, and, in particular, $p = 2d/(d-2)$  when $d > 2$, and any $2 < p < \infty$ if $d = 1,2$. 
     
    We  claim that,  for any $2 < q < 3 \wedge (p/2 + 1)$, we have $m \in L_{t,x}^q$. Indeed,  using.  that, if $q < p/2 + 1$, then $ 2(q-1) < p$ and, if  $q < 3$, then $q - 1 < 2$, we have, for some $C$ depending on the universal in $\eps$ bounds on the $m_0^{\eps}$'s and the $a_\eps$,  the inequalities 
    \begin{align*}
        \int_{t_0}^T \int_{\Omega} |m_t^{\eps}|^q dx dt &\leq \int_0^T \Big( \int_{\Omega} |m_t^{\eps}|^2 \Big)^{1/2} \Big( \int_{\Omega} |m_t^{\eps}|^{2(q-1)} \Big)^{1/2} dt
        \\[2mm]
        &\leq \|m^{\eps}\|_{L_t^{\infty} L_x^2} \int_0^T \|m_t^{\eps}\|_{L_x^{2(q-1)}}^{q-1} dt \leq C\|m^{\eps}\|_{L_t^{\infty} L_x^2} \int_0^T \|m_t^{\eps}\|_{L_x^{p}}^{q-1} dt
        \\[2mm]
        &\leq C\|m^{\eps}\|_{L_t^{\infty} L_x^2} \norm{m^{\eps}}_{L_t^2 L_x^p}^{(q-1)/2}. 
    \end{align*}
   %where we used the fact that $q < p/2 + 1 \implies 2(q-1) < p$ and $q < 3 \implies q - 1 < 2$. Thus $m^{\eps}$ is bounded in $L_{t,x}^p$ for some $p > 2$. 
    
    Now for $\eps, \delta > 0$, we compute
    \begin{align*}
        \frac{d}{dt} \frac{1}{2} &\int_{\Omega} |m_t^{\eps} - m_t^{\delta}|^2 = - \int_{\Omega} |Dm_t^{\eps} - Dm_t^{\delta}|^2dx  + \int_{\Omega} \big( Dm_t^{\eps} - Dm_t^{\delta} \big) \cdot (\alpha^{\eps} m_t^{\eps} - \alpha^{\delta} m_t^{\delta}\big) dx
        \\[2mm]                                                                                                                                                                                                                                                                                                                                                                                                                                                                                                                                                                                                                                                                                                                                                                                  
        &\leq - \frac{1}{2} \int_{\Omega} |Dm_t^{\eps} - Dm_t^{\delta}|^2 dx + \frac{1}{2} \int_{\Omega} \big| \alpha^{\eps} m_t^{\eps} - \alpha^{\delta} m_t^{\delta} \big|^2 dx
        \\[2mm]
        &\leq  - \frac{1}{2} \int_{\Omega} |Dm_t^{\eps} - Dm_t^{\delta}|^2 dx +  \int_{\Omega} |\alpha^{\eps}|^2 \big| m_t^{\eps} -  m_t^{\delta} \big|^2 dx + \int_{\Omega} |m_t^{\delta}|^2 |\alpha^{\eps} - \alpha^{\delta}|^2 dx 
        \\
        &\leq  - \frac{1}{2} \int_{\Omega} |Dm_t^{\eps} - Dm_t^{\delta}|^2 dx +  C\int_{\Omega} \big| m_t^{\eps} -  m_t^{\delta} \big|^2 dx + \int_{\Omega} |m_t^{\delta}|^2 |\alpha^{\eps} - \alpha^{\delta}|^2 dx, 
    \end{align*}
    with $C$ independent of $\eps$ and $\delta$. 
   
   Then, using  Gronwall's inequality, we deduce that, if $r =  \frac{2p}{p-2}$, then 
    \begin{align*}
        \| m^{\eps} - m^{\delta}\|_{L_t^{\infty} L_x^2} &+ \| m^{\eps} - m^{\delta}\|_{L_t^{2} H_0^1} \leq C \Big( \int_{t_0}^T \int_{\Omega} |m_t^{\delta}|^2 |\alpha^{\eps}(t,x) - \alpha^{\delta}(t,x)|^2 dx dt\Big)^{1/2} 
        \\
        &\leq C \| m^{\delta} \|_{L_{t,x}^p} \|\alpha^{\eps} - \alpha^{\delta}\|_{L_{t,x}^r}. %\quad r =  \frac{2p}{p-2}.
    \end{align*}
    It follows that $(m^{\eps})_{\eps > 0}$ is Cauchy in $L_t^{\infty} L_x^2 \cap L_t^2 H_0^1$, and  converges in this space towards the solution $m$ to \eqref{eq.FP}. This allows us to pass to the limit in \eqref{l2.equality} to obtain \eqref{l2.decrease}.  
    \end{proof}

\subsection{Dynamic programming and space-time regularity of the value function.} The value functions $U$ and $U^R$ satisfy the classical dynamic programming principle, which is a crucial step of the 
proofs of the uniform continuity of the value functions and the viscosid  $U$ is  viscosity solution. 

\begin{lemma} \label{lem.dpp}
    For each $(t_0,m_0) \in [0,T) \times \sub$ and $h \in (0,T-t_0)$, we have 
    \begin{align*}
        U(t_0,m_0) = \inf_{(m,\alpha) \in {\mathcal A}(t_0,m_0;h)} \bigg\{ \int_{t_0}^{t_0 + h}\bigg( \int_{\Omega} L\big( x, \alpha(t,x) \big)m_t(dx) + F(m_t) \bigg) dt + U(t_0 + h, m_{t_0 + h}) \bigg\}, 
    \end{align*}
   where ${\mathcal A}(t_0,m_0;h)$ is the collection of pairs $(m,\alpha)$ consisting of a curve $[t_0,t_0+h] \ni t \mapsto m_t \in \sub$ and a measurable map $\alpha : [t_0,t_0 + h] \times \Omega \to \R^d$ such that
\begin{align*}
    \int_{t_0}^{t_0+h} \int_{\Omega} |\alpha(t,x)|^2 m_t(dx) < \infty, 
\end{align*}
and $m$ satisfies in the sense of distributions 
\begin{align*}
    \partial_t m = \Delta m - \text{div}(m\alpha) \ \text{in} \ (t_0, t_0+h)\times \Omega, \   m_{t_0} = m_0 \text{ in} \ \Omega, \ m = 0 \text{ on } (t_0,T) \times \partial \Omega. 
\end{align*}
An analogous dynamic programming principle holds for $U^R$.
\end{lemma}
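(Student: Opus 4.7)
The plan is the standard two-inequality argument for dynamic programming, relying on Proposition \ref{prop.FP} to glue and split admissible pairs across the intermediate time $t_0+h$. Both directions reduce to writing the total cost as a sum over $[t_0, t_0+h]$ and $[t_0+h, T]$, using $\eps$-optimality on one half and bounding the other half by $U$.

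For $U(t_0, m_0) \leq \text{RHS}$, I fix $(m, \alpha) \in \mathcal{A}(t_0, m_0; h)$ and, given $\eps > 0$, choose $(m', \alpha') \in \mathcal{A}(t_0+h, m_{t_0+h})$ whose cost is within $\eps$ of $U(t_0+h, m_{t_0+h})$. I then define $(\bar m, \bar \alpha)$ on $[t_0, T]$ by concatenation, namely $(m, \alpha)$ on $[t_0, t_0+h]$ and $(m', \alpha')$ on $(t_0+h, T]$. Since $m'_{t_0+h} = m_{t_0+h}$ and each piece is a distributional solution of the Fokker-Planck equation with Dirichlet boundary data, Proposition \ref{prop.FP} together with continuity in $\bd$ at $t_0+h$ implies that $\bar m \in C([t_0, T]; \sub)$ is a distributional solution on the whole interval with drift $\bar \alpha$ and initial datum $m_0$; hence $(\bar m, \bar \alpha) \in \mathcal{A}(t_0, m_0)$. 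Splitting the cost integral at $t_0+h$ gives
\begin{align*}
U(t_0, m_0) \leq \int_{t_0}^{t_0+h} \left( \int_\Omega L(x, \alpha(t,x)) m_t(dx) + F(m_t) \right) dt + U(t_0+h, m_{t_0+h}) + \eps,
\end{align*}
and taking the infimum over $(m, \alpha) \in \mathcal{A}(t_0, m_0; h)$ and letting $\eps \to 0$ yields the desired inequality.

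For the reverse inequality, I fix $\eps > 0$ and pick $(m, \alpha) \in \mathcal{A}(t_0, m_0)$ whose total cost is within $\eps$ of $U(t_0, m_0)$. The restrictions $(m, \alpha)|_{[t_0, t_0+h]}$ and $(m, \alpha)|_{[t_0+h, T]}$ lie in $\mathcal{A}(t_0, m_0; h)$ and in $\mathcal{A}(t_0+h, m_{t_0+h})$ respectively, so the cost on $[t_0+h, T]$ plus $G(m_T)$ is at least $U(t_0+h, m_{t_0+h})$. Thus
\begin{align*}
U(t_0, m_0) + \eps \geq \int_{t_0}^{t_0+h} \left( \int_\Omega L(x, \alpha(t,x)) m_t(dx) + F(m_t) \right) dt + U(t_0+h, m_{t_0+h}),
\end{align*}
and the right-hand side is bounded below by the infimum appearing in the statement. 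Sending $\eps \to 0$ completes the proof. The same argument transfers verbatim to $U^R$, since the pointwise bound $|\alpha(t,x)| \leq R$ is preserved by both concatenation and restriction.

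The only genuinely delicate step is the concatenation in the first direction: one must check that the piecewise-defined $\bar m$ is a single distributional solution on all of $[t_0, T]$ rather than two separate ones. This will follow because continuity in $\bd$ forces $\bar m \in C([t_0, T]; \sub)$, so any smooth space-time test function vanishing near $T$ can be decomposed via a time cutoff into contributions supported in $[t_0, t_0+h]$ and in $[t_0+h, T]$; each contribution vanishes by the Fokker-Planck equation on the corresponding subinterval, and the boundary term at $t = t_0+h$ produced by the cutoff cancels out because $\bar m$ is continuous at the joining time.
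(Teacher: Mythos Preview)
Your proof is correct and follows exactly the standard two-inequality argument the paper has in mind; in fact the paper omits the proof entirely, writing only that ``since this is a deterministic optimal control problem, the proof is standard and is omitted.'' Your treatment of the concatenation step (continuity at the joining time ensuring a global distributional solution) fills in precisely the detail one would need to make the omitted argument rigorous.
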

%\begin{proof}
    Since this is a deterministic optimal control problem, the proof is standard and is omitted.
% \end{proof}
The next result is about the existence of minimizers for $U$ and $U^R$.

\begin{proposition} \label{prop.optconditions}
    For each $(t_0,m_0) \in [0,T) \times \sub$, there is at least one minimizer $(m,\alpha)$ in the problem definition $U(t_0,m_0)$. Moreover, any minimizer satisfies 
    \begin{align*}
        \alpha(t,x) = - D_p H(x, D u(t,x)), \quad dt \otimes m_t(x) dx \,\, a.e., 
    \end{align*}
    where $(m,u)$ is some solution to
    \begin{align} \label{mfg}
        \begin{cases} 
        \ds - \partial_t u - \Delta u + H(x, Du) = \frac{\delta F}{\delta m} (m_t,x),                                                                                                              \ \text{in} \ [t_0,T] \times \Omega, \vspace{.1cm}
       \ds  \\
        \partial_t m = \Delta m + \text{div}\big(m_t D_pH(x,Du(t,x)) \big), \quad (t,x) \in [t_0,T] \times \Omega, 
      \ds   \\
      \ds   m_{t_0} = m_0, \quad u(T,x) = \frac{\delta G}{\delta m}(m,x), \quad u = m = 0 \text{ on } [t_0,T] \times \partial \Omega.
        \end{cases}
    \end{align}
\end{proposition}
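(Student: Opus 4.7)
The plan for existence is a direct method in the calculus of variations, made cleanest after the change of variables $w = m\alpha$. Given a minimizing sequence $(m^n,\alpha^n)\in \mathcal A(t_0,m_0)$, the quadratic growth of $H$ in \eqref{assump.main1} translates by duality into superlinear (quadratic) growth of $L$ in $\alpha$, hence the uniform bound
\[
\int_{t_0}^T\!\!\int_\Omega |\alpha^n(t,x)|^2\, m^n_t(dx)\, dt \leq C,
\]
and by Cauchy--Schwarz $w^n := m^n\alpha^n$ is bounded in $L^2(dt;L^1(\Omega))$. Because the Fokker--Planck equation is linear in $(m^n,w^n)$, the curves $(m^n)$ are uniformly $\bd$-equicontinuous in time, so Arzelà--Ascoli produces $m^n \to \bar m$ in $C([t_0,T];\sub)$ (along a subsequence) and $w^n \rightharpoonup \bar w$ as measures. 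Passing to the limit in the distributional formulation of \eqref{eq.FP} (which is stated in Proposition \ref{prop.FP}) yields $\bar w \ll \bar m\otimes dt$, and setting $\bar\alpha := d\bar w/d(\bar m\otimes dt)$, the pair $(\bar m,\bar\alpha)$ lies in $\mathcal A(t_0,m_0)$. Since $(m,w)\mapsto m\,L(x,w/m)$ is jointly convex and lower semicontinuous (Ball--James), while $F,G$ are continuous in $\bd$, the total cost is lower semicontinuous, and $(\bar m,\bar\alpha)$ is a minimizer.

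For the optimality relation, I would invoke a Pontryagin-type argument. Let $u$ be the solution of the backward HJB in \eqref{mfg} with terminal datum $\tfrac{\delta G}{\delta m}(\bar m_T,\cdot)$, source $\tfrac{\delta F}{\delta m}(\bar m_t,\cdot)$, and zero Dirichlet boundary data; standard parabolic theory gives $u \in C^{1,2}$ up to $\partial\Omega$ given $H \in C^2$, the $C^1$-regularity of $F,G$, and the smoothness of $\partial\Omega$ (if needed, one first truncates $H$ and regularizes $F,G$, solves the system, and passes to the limit using the a priori Lipschitz bound on $u$). Pairing the FP equation for the linearized flow $\mu := \partial_\eps m^\eps|_{\eps=0}$ (driven by a perturbation $\bar\alpha + \eps\beta$) with $u$, and the HJB with $\mu$, integration by parts over $[t_0,T]\times\Omega$ using $u|_{\partial\Omega} = \mu|_{\partial\Omega} = 0$ gives
\[
\frac{d}{d\eps}J(\bar m,\bar\alpha + \eps\beta)\Big|_{\eps=0} = \int_{t_0}^T\!\!\int_\Omega \bigl(L(x,\bar\alpha) + \bar\alpha\cdot Du + H(x,Du)\bigr)\mu_t(dx)\,dt + \int_{t_0}^T\!\!\int_\Omega \bigl(D_a L(x,\bar\alpha) + Du\bigr)\cdot\beta\,\bar m_t(dx)\,dt.
\]
Equivalently, a spike variation at a Lebesgue time $t^*$ shows that $\bar\alpha(t,x)$ must minimize $a\mapsto L(x,a) + a\cdot Du(t,x)$ for $dt\otimes \bar m_t$-a.e.\ $(t,x)$, and the Legendre--Fenchel identity identifies the minimizer as $-D_p H(x,Du(t,x))$.

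The main obstacle is the interplay between absorption and boundary behavior. Because $\bar m$ vanishes on $\partial\Omega$ and can be arbitrarily small nearby, extracting a measurable $\bar\alpha$ from the weak limit $\bar w$ and showing $\bar w \ll \bar m\otimes dt$ requires a disintegration argument combined with the fact that the convex integrand equals $+\infty$ when $\bar m = 0$ and $\bar w \neq 0$. The duality computation also relies on $C^{1,2}$-regularity of $u$ up to $\partial\Omega$, which is delicate for a quasilinear HJB with Dirichlet data under only quadratic growth of $H$, but is handled by Ladyzhenskaya--Solonnikov--Ural'tseva theory after truncation (using $H^R$) and approximation. The argument for $U^R$ is identical, with $H$ replaced by $H^R$ throughout and the constraint $|\bar\alpha|\leq R$ automatically satisfied by the minimizer.
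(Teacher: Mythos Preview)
Your proposal is correct and follows essentially the same route the paper indicates: the paper omits the proof entirely, stating it is a straightforward extension of the existence and optimality-condition arguments for standard MFC in \cite[Proposition 3.1]{BrianiCardaliaguet}, and your sketch (direct method via the $(m,w)=(m,m\alpha)$ relaxation with joint convexity and lower semicontinuity, followed by the Pontryagin-type adjoint computation) is precisely that argument adapted to the Dirichlet setting. The technical points you flag---disintegration of $\bar w$ with respect to $\bar m\otimes dt$ and boundary regularity of $u$---are the right places where the absorbing boundary requires extra care beyond the whole-space or periodic case.
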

%\begin{proof}
 The proof of Proposition~\ref{prop.optconditions} is a straightforward extension of the arguments in the proof of the existence of minimizers and theoptimality conditions for standard MFC problem, as obtained in, for example, \cite[Proposition 3.1]{BrianiCardaliaguet}, so we omit it.
% \end{proof}
 
The next result is about the important fact that the second component of any minimizer for $U(t_0,m_0)$ is bounded by a constant which does not depend on the initial state and time. 
\begin{proposition} \label{prop.alphabounded}
    There is a constant $C$ which is independent of $(t_0,m_0)$ such that any solution to \eqref{mfg} satisfies $\|Du\|_{\infty} \leq C$.
    \end{proposition}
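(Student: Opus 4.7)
The plan is to derive the uniform Lipschitz bound on $u$ in three steps, following a classical PDE strategy for Hamilton–Jacobi–Bellman equations with quadratic Hamiltonian and Dirichlet boundary conditions: a universal $L^\infty$ estimate by comparison with constants, a boundary Lipschitz estimate by barrier functions, and a global gradient estimate via the Bernstein method. A preliminary observation is that since $F,G$ are $C^1$ and Lipschitz on $(\sub,\bd)$ with universal constants, the forcing $f(t,x):=\frac{\delta F}{\delta m}(m_t,x)$ and terminal trace $g(x):=\frac{\delta G}{\delta m}(m,x)$ are uniformly bounded and uniformly Lipschitz in $x$, and (with the natural normalization inherited from duality with $E$) vanish on $\partial\Omega$. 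All constants in what follows depend only on these universal bounds, on $\Omega$, and on the structural bounds on $H$.

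\textit{Step 1: uniform $L^\infty$ bound.} Since $H(\cdot,0)$ is uniformly bounded, the spatially constant barriers $\phi^{\pm}(t,x)=\pm M(1+(T-t))$ are super/subsolutions provided $M\geq \|f\|_\infty+\|H(\cdot,0)\|_\infty$; coupling this with $\phi^+\geq g$ at $t=T$ and $\phi^+\geq 0=u$ on $[t_0,T]\times\partial\Omega$ (and symmetrically for $\phi^-$), standard comparison yields $\|u\|_\infty\leq C_1$.

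\textit{Step 2: boundary Lipschitz bound.} Here I would construct barriers in the spirit of Lemma \ref{lem.barrier} to obtain $|u(t,x)|\leq C_2\,d_{\partial\Omega}(x)$ in a neighborhood $\cN_\eps$ of $\partial\Omega$. The key structural fact is that, since $H(x,p)=\sup_a\{-a\cdot p-L(x,a)\}$ and $L$ is locally bounded, $H$ enjoys the linear lower bound $H(x,p)\geq R|p|-C_R$ for every $R>0$. Taking $\phi(x)=A\psi(d_{\partial\Omega}(x))$ with $\psi$ concave, $\psi(0)=0$, and $\psi'(0)$ finite, one computes on $\cN_\eps$,
\[
-\Delta\phi+H(x,D\phi)\;\geq\;-A\psi''(d)+A\psi'(d)\bigl(R-\|\Delta d_{\partial\Omega}\|_\infty\bigr)-C_R.
\]
Choosing $R=\|\Delta d_{\partial\Omega}\|_\infty+1$ and then $A$ large enough (depending on $\|f\|_\infty$, $\|g\|_\infty$, $C_1$, and the Lipschitz constant of $g$), $\phi^+(t,x):=A\psi(d_{\partial\Omega}(x))$ is a supersolution of the HJB equation on $[t_0,T]\times\cN_\eps$ that dominates $u$ on the parabolic boundary: at $t=T$ since $|g(x)|\leq L_g\,d_{\partial\Omega}(x)$, on $\partial\Omega$ since both vanish, and on $\partial^+\cN_\eps$ since $A\psi(\eps)\geq C_1$. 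Comparison yields $u\leq A\psi'(0)\,d_{\partial\Omega}$, and the reverse bound is symmetric. Since $u$ vanishes on $\partial\Omega$, this gives $|Du|\leq C_2$ on $[t_0,T]\times\partial\Omega$.

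\textit{Step 3: global Lipschitz bound via Bernstein.} Setting $w:=|Du|^2$ and differentiating the HJB equation componentwise, then contracting with $2Du$, a standard computation gives
\[
-\partial_t w-\Delta w+D_pH(x,Du)\cdot Dw+2|D^2u|^2+2Du\cdot D_xH(x,Du)=2Du\cdot Df.
\]
Using $|D_xH|+|D_pH|\leq C(1+|Du|)$ and the uniform Lipschitz bound on $f$ from the preliminary observation, we control $|2Du\cdot D_xH|\leq C(1+w)$ and $|2Du\cdot Df|\leq C(1+w)$, so dropping the nonnegative $2|D^2u|^2$ yields the linear parabolic inequality $-\partial_t w-\Delta w+D_pH\cdot Dw\leq C_3(1+w)$. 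The parabolic maximum principle (applied to $e^{C_3(T-t)}(w+1)$) then bounds $w$ on $[t_0,T]\times\Omega$ by its parabolic boundary values times a universal constant: at $t=T$ we have $w\leq \|Dg\|_\infty^2$, and on $[t_0,T]\times\partial\Omega$ we have $w\leq C_2^2$ by Step 2. Hence $\|Du\|_\infty^2=\|w\|_\infty\leq C$, which is the desired bound. The main obstacle is Step 2: one must engineer a barrier whose gradient is flat enough to absorb the quadratic growth of $H$ near the boundary while still being steep enough to dominate $u$ throughout $\cN_\eps$, and the crucial idea is to exploit the convex-duality lower bound $H(x,p)\geq R|p|-C_R$ with $R>\|\Delta d_{\partial\Omega}\|_\infty$, so that the beneficial $R|D\phi|$ term overwhelms the adverse curvature of the distance function.
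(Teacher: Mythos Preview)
Your three-step scaffold---$L^\infty$ bound, boundary barrier, interior propagation---is exactly the paper's, and Step~1 matches verbatim. In Step~3 you take a different route: the paper propagates the boundary gradient bound to the interior by a doubling-of-variables argument (a scalar analogue of Proposition~\ref{prop.DiVNK}, using $|D_xH|\le C(1+|p|)$ and the $x$-Lipschitz regularity of $\frac{\delta F}{\delta m}$), whereas you use Bernstein. Both close under the standing hypotheses; Bernstein needs enough classical regularity of $u$ to differentiate the equation (available here via Schauder, since $H\in C^2$ and the forcing is Lipschitz in $x$), while doubling of variables is more robust and works directly at the viscosity level.

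There is, however, a genuine asymmetry in Step~2 that ``the reverse bound is symmetric'' glosses over. For the upper barrier $\phi^+=A\psi(d_{\partial\Omega})$ your mechanism is the convex-duality \emph{lower} bound $H(x,p)\ge R|p|-C_R$, and this works. But the lower barrier $\phi^-$ requires an \emph{upper} bound on $H(x,D\phi^-)$, and the only one available under \eqref{assump.main1} is quadratic, $H\le C(1+|p|^2)$; with your concave $\psi$ and large $A$ this produces a term $CA^2(\psi')^2$ on the wrong side that your linear mechanism cannot absorb. The paper sidesteps this by using Lemma~\ref{lem.barrier} for \emph{both} signs: it builds $\phi^\pm$ with $\mp\Delta\phi^\pm\ge C(1+|D\phi^\pm|^2)$, so that the Laplacian of the barrier itself dominates the quadratic growth of $|H|$, and only the two-sided bound $|H|\le C(1+|p|^2)$ is needed. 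Invoking that construction for $\phi^-$ (equivalently, solving the companion ODE $\psi''=C'(1+(\psi')^2)$) closes your Step~2.
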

  Before we present the proof we state as a corollary an immediate  consequence of Proposition~\ref{prop.alphabounded}.
  
 \begin{corollary}
There is a constant $C$ such that, for any $(t_0,m_0) \in [0,T) \times \sub$ and any 
optimal control $\alpha$ for the problem defining $U(t_0,m_0)$, we have $\|\alpha\|_{\infty} \leq C$.
\end{corollary}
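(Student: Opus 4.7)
The plan is to obtain the bound on $\alpha$ as an essentially immediate consequence of the two preceding results, combined with the quadratic growth condition on $D_pH$ from \eqref{assump.main1}. The only substantive point will be a careful interpretation of $\|\alpha\|_\infty$, since the optimality condition from Proposition \ref{prop.optconditions} only identifies $\alpha$ up to a $dt\otimes m_t(x)dx$-null set.

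First, I would fix $(t_0,m_0)\in[0,T)\times\sub$ and let $(m,\alpha)$ be any minimizer in \eqref{valuefunction}. By Proposition \ref{prop.optconditions}, there exists a solution $(m,u)$ of the coupled system \eqref{mfg} such that
\[
\alpha(t,x) = -D_pH\bigl(x,Du(t,x)\bigr) \qquad dt\otimes m_t(x)dx\text{-a.e.}
\]
Next, by Proposition \ref{prop.alphabounded} applied to this $(m,u)$, there is a constant $C_1$, independent of $(t_0,m_0)$, such that $\|Du\|_\infty \leq C_1$. Finally, using the bound $|D_pH(x,p)|\leq C(1+|p|)$ from \eqref{assump.main1}, I conclude that
\[
|\alpha(t,x)| \;=\; \bigl|D_pH(x,Du(t,x))\bigr| \;\leq\; C\bigl(1+\|Du\|_\infty\bigr) \;\leq\; C(1+C_1),
\]
which gives the desired uniform bound.

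The only delicate point, which I would address by a brief remark, is the null-set ambiguity in the identification of $\alpha$. Since the Lagrangian cost in \eqref{valuefunction} only integrates $\alpha$ against $m_t(dx)dt$, any two controls that agree $dt\otimes m_t$-a.e.\ produce the same value, so we may always replace a given optimal $\alpha$ by its canonical representative $-D_pH(\cdot,Du(\cdot,\cdot))$ without affecting optimality; this representative is bounded by $C(1+C_1)$ everywhere on $[t_0,T]\times\Omega$. Equivalently, $\|\alpha\|_\infty$ should be read as the essential supremum with respect to the reference measure $dt\otimes m_t(x)dx$, with which the claim holds as stated. There is no genuine obstacle here beyond invoking Propositions \ref{prop.optconditions} and \ref{prop.alphabounded}; the entire content of the corollary is the chain of inequalities above.
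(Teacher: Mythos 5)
Your proof is correct and matches the paper's (implicit) argument: the corollary is stated as an immediate consequence of Propositions \ref{prop.optconditions} and \ref{prop.alphabounded} combined with the linear-growth bound $|D_pH(x,p)|\leq C(1+|p|)$ from \eqref{assump.main1}, and your discussion of the $dt\otimes m_t$-null-set ambiguity is a reasonable clarification the paper leaves unsaid. One small slip: you refer to a ``quadratic growth condition on $D_pH$,'' but what you actually (correctly) invoke is the \emph{linear} growth of $D_pH$; the quadratic bound in \eqref{assump.main1} is on $H$ itself.
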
 
We continue with the proof of the last proposition.
\begin{proof}[Proof of Proposition \ref{prop.alphabounded}.]

    Fix a curve $t \mapsto m_t \in \sub$, and let $u$ be the unique viscosity solution of 
    \begin{align} \label{little.u}
        \begin{cases}
           & - \partial_t u - \Delta u + H(x,Du) = \frac{\delta F}{\delta m}(m_t,x) \  \text{in} \   [t_0,T) \times \Omega, 
            \\[2mm]
            & u(T,\cdot) = \frac{\delta G}{\delta m}(m,\cdot)  \ \text{on} \  \Omega, \qquad  u = 0 \ \text{ on } \  [t_0,T] \times \partial \Omega.
        \end{cases}
    \end{align}
   It follows from a straightforward  application of the maximum  principle and the assumptions on $H, F$ and $G$ that there exists a constant $C$ which is independent of $m$ and $t_0$ such that $\|u\|_{\infty} \leq C.$ 
    
   Next choose $C$ large enough that
    \begin{align*}
        \|H(\cdot,0)\| + \norm{\frac{\delta F}{\delta m}}_{\infty} \leq C, \quad \|u\|_{\infty} \leq C, \quad 
         |\frac{\delta G}{\delta m}(m_T,x )| \leq C d_{\partial \Omega}(x)
    \end{align*}
    Using Lemma \ref{lem.barrier}, we can find $\eps > 0$ and two smooth functions  $ \phi^+ , \phi^- : \overline \cN^{\eps}\to \R$ such that  
    \begin{equation*}
    \begin{split}
    & (t,x) \to \phi^+(x) \text{ is a supersolution to the equation satisfied by $u$ and}\\
   & \phi^+\geq u \ \text{on} \ \big([t_0,T] \times \partial^+ \cN^{\eps} \big) \cup \big(\{T\} \times \Omega\big) \big),
        \end{split}
        \end{equation*}
and
\begin{equation*}
    \begin{split}
& (t,x) \to  \phi^-(x) \text{ is a subsolution to the equation satisfied by $u$ and}\\
& \phi^-\leq u \ \text{on} \ \big([t_0,T] \times \partial^+ \cN^{\eps} \big) \cup \big(\{T\} \times \Omega\big) \big).
\end{split}
 \end{equation*}    
It follows from  the comparison principle, that $u$ satisfies 
    \begin{align*}
       \begin {split} 
       \phi^-(x) \leq u(t,x) \leq \phi^+(x) \  \text{for} \ (t, x) \in [t_0,T] \times  \cN^{\eps}.
       \end{split}
    \end{align*}
Recalling that $\phi^-(0) = 0 = \phi^+(0)$, we deduce that there exists a constant $C_0$, which is independent of $m$,  such that $u$ satisfies 
$$|Du| \leq C_0 \ \text{on} \ [t_0,T] \times \partial \Omega.$$

Propagating this gradient bound in the interior when $H$ satisfies the additional structure condition in \eqref{assump.main1} is a straightforward and  much simpler version of the proof of Proposition \ref{prop.DiVNK}, and so is omitted.  \end{proof}

The preceding proposition shows that optimal controls are bounded, uniformly in the initial condition. Thus, without loss of generality, we may assume that all controls are bounded. More precisely, we have the following corollary of Proposition \ref{prop.alphabounded}.  Since its proof is a mathematical repetition of the first sentence of this paragraph, it is omitted.

\begin{corollary} \label{cor.Ur}
    There is a constant $R_0$ such that, for all $R \geq R_0$, we have $U = U^R$.
\end{corollary}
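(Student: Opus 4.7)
The proof plan is essentially to combine Proposition \ref{prop.optconditions} with the gradient estimate of Proposition \ref{prop.alphabounded}. First, I would note the easy direction: since restricting controls to $|\alpha| \leq R$ shrinks the admissible set, one has $U \leq U^R$ on $[0,T] \times \sub$ for every $R > 0$, with no choice of $R_0$ needed.

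For the reverse inequality, I would fix $(t_0,m_0) \in [0,T) \times \sub$ and invoke Proposition \ref{prop.optconditions} to produce a minimizer $(m,\alpha)$ for $U(t_0,m_0)$, together with an associated solution $(m,u)$ of the MFG-type system \eqref{mfg}, such that
\[
\alpha(t,x) = -D_pH\bigl(x, Du(t,x)\bigr) \qquad dt\otimes m_t(dx)\text{-a.e.}
\]
By Proposition \ref{prop.alphabounded}, there is an independent of $(t_0,m_0)$ constant $C$ with $\|Du\|_\infty \leq C$. Combining this with the growth condition $|D_pH(x,p)| \leq C(1+|p|)$ from \eqref{assump.main1}, I obtain a constant $R_0$, independent of $(t_0,m_0)$, such that $|\alpha(t,x)| \leq R_0$ for $dt\otimes m_t(dx)$-a.e.\ $(t,x)$.

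Therefore, for any $R \geq R_0$, this minimizer $(m,\alpha)$ already lies in $\mathcal A^R(t_0,m_0)$, which yields
\[
U^R(t_0,m_0) \leq \int_{t_0}^T \Bigl( \int_\Omega L(x,\alpha(t,x)) m_t(dx) + F(m_t) \Bigr) dt + G(m_T) = U(t_0,m_0).
\]
Since $(t_0,m_0)$ was arbitrary, combining the two inequalities gives $U = U^R$ on $[0,T) \times \sub$, and the terminal equality $U(T,\cdot) = G = U^R(T,\cdot)$ is immediate. I do not foresee any serious obstacle here: the work has already been done in Propositions \ref{prop.optconditions} and \ref{prop.alphabounded}, and this corollary is purely a bookkeeping consequence of those two results together with the linear growth of $D_pH$.
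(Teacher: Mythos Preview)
Your proposal is correct and matches the paper's approach: the paper explicitly omits the proof, noting only that optimal controls are bounded uniformly in the initial condition by Propositions \ref{prop.optconditions} and \ref{prop.alphabounded}, so the corollary is immediate. Your write-up fills in exactly these bookkeeping details.
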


The next result is about the continuity of $U$.

\begin{proposition}
    \label{prop.d1Lip}
    There is a constant $C$ such that 
    \begin{align}
       |U(t,m) - U(s,n)| \leq C \Big(|t-s|^{1/2} + \bd(m,n) \Big).
    \end{align}
\end{proposition}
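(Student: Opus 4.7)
The plan is to establish time and space regularity separately, using the dynamic programming principle (Lemma~\ref{lem.dpp}), the uniform $L^\infty$ bound $R_0$ on $\eps$-optimal controls (Proposition~\ref{prop.alphabounded} together with Corollary~\ref{cor.Ur}), and the well-posedness and stability of the Fokker--Planck flow from Proposition~\ref{prop.FP}. For time regularity at fixed $m$ and $0\leq t<s\leq T$, I apply DPP twice on $[t,s]$: once with the trivial control $\alpha\equiv 0$, yielding $U(t,m)\leq U(s,m_s^0)+C(s-t)$ where $m_s^0$ is the Dirichlet heat semigroup applied to $m$; and once with an $\eps$-optimal control for $U(t,m)$ bounded by $R_0$, giving the reverse inequality. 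In both cases the running cost on $[t,s]$ is $O(s-t)$ and $\bd(m_s,m)\leq C\sqrt{s-t}$. The latter estimate comes from $\int \phi \, d(m_s-m)=\E[\phi(X_s)\ind_{s<\tau}-\phi(X_0)]$ for $\phi\in E$, which splits into a pathwise term of order $\E|X_s-X_0|\leq C\sqrt{s-t}$ on $\{s<\tau\}$ and a boundary contribution $\E[|\phi(X_0)|\ind_{s\geq\tau}]\leq \E[d_{\partial\Omega}(X_0)\ind_{s\geq\tau}]\leq C\sqrt{s-t}$, where Brownian exit-time estimates are combined with the pointwise bound $|\phi|\leq d_{\partial\Omega}$. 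Together with the space regularity below, this yields $|U(t,m)-U(s,m)|\leq C\sqrt{s-t}$.

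For space regularity, by Corollary~\ref{cor.Ur} it suffices to show that the cost functional $J(\alpha;t,\cdot)$ is $\bd$-Lipschitz uniformly over controls $\alpha$ with $\|\alpha\|_\infty\leq R_0$. Writing $m_r,n_r$ for the Fokker--Planck flows with drift $\alpha$ starting from $m,n$, the linearity of Fokker--Planck in the initial datum rewrites
\[
\int_t^T \int L(x,\alpha(r,x))\, d(m_r-n_r)(x)\, dr \;=\; \int_\Omega \Psi^L(y)\, d(m-n)(y),
\]
where $\Psi^L(y)=\E^y\big[\int_t^T L(X_r,\alpha(r,X_r))\,\ind_{r<\tau}\, dr\big]$. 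Similarly for any $\phi\in E$, $\int \phi\, d(m_r-n_r)=\int \psi^\phi(t,\cdot)\, d(m-n)$, where $\psi^\phi$ solves the backward linear Kolmogorov--Dirichlet equation with terminal datum $\phi$ and zero boundary values. The $L$-contribution is then bounded by $\bd(m,n)$ times the $E$-seminorm of $\Psi^L$, while the Lipschitzness of $F$ and $G$ reduces the $F$ and $G$ contributions to the stability estimate $\bd(m_r,n_r)\leq C\bd(m,n)$, which follows once $\psi^\phi(t,\cdot)/C\in E$ uniformly in $\phi\in E$.

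The main obstacle is thus the uniform Lipschitz estimate (with constant depending only on $R_0$, $T$, and $\Omega$) for $\Psi^L$ and for $\psi^\phi$; since $\alpha$ is merely $L^\infty$, classical parabolic Schauder theory is not directly available. I plan to establish it by the two-step strategy used in the proof of Proposition~\ref{prop.DiVNK}. First, the barrier functions from Lemma~\ref{lem.barrier}, applied as in Lemma~\ref{lem.induction.gen}, produce uniform Lipschitz control in the tubular neighborhood $\cN_\eps$ of $\partial\Omega$, requiring only $\|\alpha\|_\infty\leq R_0$ and the boundedness of the source and terminal data. Second, a doubling-of-variables argument identical in spirit to the one in Proposition~\ref{prop.DiVNK} propagates this boundary Lipschitz bound to the interior with no dependence on any $x$-regularity of $\alpha$. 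This gives $\Psi^L/C,\,\psi^\phi/C\in E$, which closes the argument for space regularity and, combined with the first paragraph, completes the proof.
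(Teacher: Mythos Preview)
Your overall architecture matches the paper's: space regularity via comparing costs along a fixed bounded control and reducing to Lipschitz bounds on the backward duals $\psi^\phi$ and $\Psi^L$; time regularity via DPP plus the space bound and $\bd(m_s,m)\leq C\sqrt{s-t}$, for which your probabilistic argument is a clean variant of the paper's duality computation.

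The gap is in your interior Lipschitz step. The doubling argument of Proposition~\ref{prop.DiVNK} relies on the Hamiltonian being Lipschitz in $x$ on bounded $p$-sets (the hypothesis $|D_xH(x,p)|\leq C_H(1+|p|)$): at the interior maximum, with the common value $p=D_{x^i}V(\bx_0)=D_{x^i}V(\by_0)$, one bounds $H(x_0^i,p)-H(y_0^i,p)$ by $C_H(1+\lambda(t_0))|x_0^i-y_0^i|$. For the linear equation $\partial_t\psi+\Delta\psi+\alpha\cdot D\psi=g$ the effective Hamiltonian is $-\alpha(t,x)\cdot p$; with $\alpha$ merely in $L^\infty$, the term $[\alpha(t,x_0)-\alpha(t,y_0)]\cdot p$ is only bounded by $2R_0\lambda(t_0)$, not by anything proportional to $|x_0-y_0|$, and the computation yields only $\sqrt{|x_0-y_0|^2+\eps^2}\leq 2R_0/\hat C$ --- no contradiction. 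So the claimed propagation ``with no dependence on any $x$-regularity of $\alpha$'' does not go through. The paper sidesteps this by expressing $\psi^\phi$ (and $\Psi^L$) via Duhamel's formula against the Dirichlet heat semigroup $(P_r)$ and invoking the smoothing bound $\|DP_r h\|_\infty\leq Cr^{-1/2}\|h\|_\infty$, which together with a singular Gronwall gives $\|D\psi^\phi\|_\infty\leq C(\|\alpha\|_\infty)\|D\phi\|_\infty$ using only $\|\alpha\|_\infty\leq R_0$. You could also rescue your route by first proving, via parabolic regularity for the adjoint $u$ in Proposition~\ref{prop.optconditions}, that the \emph{optimal} $\alpha=-D_pH(\cdot,Du)$ is uniformly Lipschitz in $x$ --- but that is an additional nontrivial step absent from your proposal.
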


\begin{proof}
   We first establish  the regularity in $m$.  Fix  $t_0 \in [0,T)$, $m_0,m_1 \in \sub$,  let  $\alpha_0$ be an optimal control started from $(t_0,m_0)$,  and 
 let $m_t^0$, $m_t^1$ be solutions to 
   \begin{align*}
       \partial_t m_t^i = \Delta m_t^i - \text{div}(m_t^i \alpha_0) \ \text{in} \ (t_0,T] \times \Omega, \quad 
         m^i_t=0 \ \text{in} \ (t_0,T] \times {\partial \Omega}, \quad m_{t_0}^i = m_i,  \quad i = 1,2. 
      % m^i_t\big|_{\partial \Omega} = 0, \quad m_{t_0}^i = m_i, \quad i = 1,2. 
   \end{align*}
   
   It follows from  the optimality of $\alpha_0$ that 
   \begin{equation} \label{m0m1bound}
   \begin{split}
        U(t_0,m_1) - &U(t_0,m_0) \leq \int_{t_0}^{T} \int_{\Omega} L\big(x,\alpha_0(t,x)\big) d(m_t^1- m_t^0)\\[1.5mm]
        & + \int_{t_0}^T \big(F(m_t^1) - F(m_t^0) \big) dt + G(m_T^1) - G(m_T^0). 
   \end{split}
   \end{equation}
   Next, we claim that there is a constant $C$ independent of $t_0,m_0,m_1$ such that
   \begin{align} \label{fpstability}
        \sup_{t_0 \leq t \leq T} \bd(m_t^1, m_t^0) \leq C \bd(m_1,m_0). 
   \end{align}
   By Lemma \ref{lem.duality}, for any test function $f \in C_c^{\infty}(\Omega)$, 
   \begin{align*}
       \int f d(m_t^1 - m_t^2) = \int \phi(t_0,\cdot) d(m_1 - m_0), 
   \end{align*}
   where $\phi$ solves the dual equation on $[t_0,t_1] \times \Omega$ with terminal condition $f$, and satisfies the estimate
   \begin{align*}
       \|D \phi\|_{\infty} \leq C, \quad C = C(\| D\phi\|_{\infty}, \|\alpha\|_{\infty}). 
   \end{align*}
    The bound \eqref{fpstability} follows, and so 
    \begin{align} \label{fgbound}
       \big| F(m_t^1) - F(m_t^0) \big| \leq C \bd(m_0,m_1), \quad \big| G(m_t^1) - G(m_t^0)  \big| \leq C \bd(m_0,m_1). 
    \end{align}
    Next, we note that, again by duality, we can write
    \begin{align} \label{gduality}
        \int_{t_0}^T \int_{\Omega} L\big(x, \alpha_0(t,x) \big) d(m_t^1 - m_t^0) = \int_{\Omega} \psi(t_0,\cdot) d(m_t^1 - m_t^0), 
    \end{align}
    where $\psi : [t_0,t] \times \ov{\Omega} \to \R$ satisfies 
    \begin{align*}
        \partial_t \psi + \Delta \psi + \alpha_0 \cdot D\psi = L\big(x, \alpha_0(t,x) \big) \  \text{in} \                                                                     \ [t_0,t] \times \Omega, \quad \psi(t,\cdot) = 0, \quad      \psi=0 \ \text{on} \ [t_0,t]\times \partial \Omega.
 %       g(t,\cdot)\big|_{\partial \Omega} = 0.
    \end{align*}
    Again, Lemma \ref{lem.duality} allows us to deduce from  \eqref{gduality} that
    \begin{align} \label{lbound}
         \int_{t_0}^T \int_{\Omega} L\big(x, \alpha_0(t,x) \big) d(m_t^1 - m_t^0)  \leq C \bd(m_0,m_1). 
    \end{align}
    Combining \eqref{m0m1bound}, \eqref{fgbound} and \eqref{lbound}, we obtain the spatial Lipschitz estimate 
    \begin{align*}
      |U(t_0,m_0) - U(t_0,m_1)| \leq C \bd(m_0,m_1). 
    \end{align*}
    
   The regularity in time follows in a standard way from the regularity in space, via dynamic programming. We omit the details.  

\end{proof}

\subsection{The viscosity solution property and the proof of Theorem~\ref{thm.conv}.}
The subject of this subsection is to show that the value function is a viscosity solution to \eqref{hjbinf} and provide the proof of Theorem~\ref{thm.conv}.

For the first part, that is, the viscosity property, we work with the truncated version $U^R$ and then use Corollary~\ref{cor.Ur} to conclude for $U$. 

\begin{proposition} \label{prop.u.subsolution}
    For each $R > 0$, the value function $U^R$ is a viscosity subsolution to  \eqref{hjbinfR}.
\end{proposition}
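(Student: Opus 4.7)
The plan is to derive the subsolution inequality from the dynamic programming principle (Lemma~\ref{lem.dpp}) and the identities \eqref{Phi.expansion} and \eqref{l2.decrease} of Proposition~\ref{prop.FP}. Given a smooth test function $\Phi$, a parameter $\delta > 0$, and a maximizer $(t_0, m_0) \in [0,T) \times (\sub \cap L^2)$ of \eqref{touchingfromabove} with $V = U^R$, my starting point is the following: for any admissible $\alpha$ with $|\alpha| \leq R$, combining
$$U^R(t_0, m_0) - U^R(t_0+h, m_{t_0+h}) \leq \int_{t_0}^{t_0+h} \Big( \int_\Omega L(x, \alpha(t,x)) m_t(dx) + F(m_t) \Big) dt$$
(from DPP) with the maximality inequality
$$U^R(t_0+h, m_{t_0+h}) - U^R(t_0,m_0) \leq \Phi(t_0+h,m_{t_0+h}) - \Phi(t_0,m_0) + \delta\big(\|m_{t_0+h}\|_2^2 - \|m_0\|_2^2\big)$$
and applying \eqref{Phi.expansion} and \eqref{l2.decrease} yields an integrated differential inequality on $[t_0, t_0+h]$.

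My first task is to establish $m_0 \in H_0^1$. Taking $\alpha \equiv 0$, the drift-dependent terms vanish and all remaining terms except $2\delta \int_{t_0}^{t_0+h} \|Dm_t\|_2^2 dt$ are bounded by a constant multiple of $h$. Thus $\int_{t_0}^{t_0+h} \|Dm_t\|_2^2 dt \leq Ch$ for small $h$, and by the mean value theorem there exist $t_h \in (t_0, t_0+h)$ with $\|Dm_{t_h}\|_2$ bounded. Since $m_{t_h} \to m_0$ strongly in $L^2$ by continuity of the heat semigroup, a weakly convergent subsequence in $H_0^1$ forces $m_0 \in H_0^1$.

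Next I would apply the integrated inequality with a general smooth $\alpha \in C_c^\infty(\Omega; \R^d)$ satisfying $|\alpha| \leq R$. Since $m_0 \in H_0^1$ and $\alpha$ is smooth, standard parabolic regularity gives $m \in C([t_0, T]; H_0^1)$; in particular $\|Dm_t\|_2^2 \to \|Dm_0\|_2^2$ and $\int_\Omega Dm_t \cdot \alpha\, m_t\, dx \to \int_\Omega Dm_0 \cdot \alpha\, m_0\, dx$ as $t \to t_0^+$. Dividing by $h$ and sending $h \to 0$ yields, for every such $\alpha$,
\begin{align*}
&-\partial_t \Phi(t_0,m_0) - \int_\Omega \Delta_x \tfrac{\delta \Phi}{\delta m}(t_0,m_0,x) m_0\, dx + 2\delta \int_\Omega |Dm_0|^2\, dx \\
&\qquad \leq \int_\Omega \Big[ L(x,\alpha(x)) + \big(D_x \tfrac{\delta \Phi}{\delta m}(t_0,m_0,x) + 2\delta Dm_0(x)\big) \cdot \alpha(x)\Big] m_0(x)\, dx + F(m_0).
\end{align*}
Taking the pointwise infimum over $|a| \leq R$, which is attained by a bounded measurable selection by convexity and continuity of $L$ and which can be approximated in $L^1(m_0)$ by smooth controls in order to transfer the inequality by bounded convergence, replaces the right-hand integrand by $-H^R\big(x, D_x \tfrac{\delta \Phi}{\delta m}(t_0,m_0,x) + 2\delta Dm_0(x)\big) m_0(x)$.

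It remains to remove the $2\delta Dm_0$ perturbation inside $H^R$. Using the global Lipschitz bound $|H^R(x,p) - H^R(x,q)| \leq C_R |p-q|$ together with Cauchy--Schwarz,
$$\int_\Omega H^R(x, D_x \tfrac{\delta \Phi}{\delta m} + 2\delta Dm_0) m_0\,dx \geq \int_\Omega H^R(x, D_x \tfrac{\delta \Phi}{\delta m}) m_0\,dx - \delta \int_\Omega |Dm_0|^2\, dx - C_R^2 \delta \|m_0\|_2^2,$$
which, when combined with the $2\delta \int_\Omega |Dm_0|^2\, dx$ term already present on the left, yields \eqref{subsol.bound} with $C = C_R^2$. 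The terminal condition $U^R(T,\cdot) \leq G$ is immediate from the definition of $U^R$. The most delicate point is the third step, namely the $H_0^1$-continuity of the Fokker--Planck trajectory at $t = t_0^+$ for a class of controls rich enough to support the pointwise optimization: my strategy is to restrict first to smooth $\alpha$, where standard parabolic theory provides the required regularity, and only then approximate the measurable pointwise-optimal selection by smooth controls to recover the infimum.
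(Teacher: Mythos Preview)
Your argument is correct, and the overall strategy—dynamic programming plus \eqref{Phi.expansion} and \eqref{l2.decrease}—matches the paper.  The execution, however, is organized differently from the paper in two respects.

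First, to pass to the limit $h \to 0$ in the $\int |Dm_t|^2$ and $\int D_x\frac{\delta\Phi}{\delta m}\cdot Dm_t$ terms, the paper works with the time-average $\mu_h = \frac{1}{h}\int_{t_0}^{t_0+h} m_t\,dt$ and uses only weak lower semicontinuity of the $H_0^1$ norm, which lets it treat a general bounded measurable $\alpha$ throughout (Proposition~\ref{prop.FP} already covers this case).  You instead first extract $m_0 \in H_0^1$ from the zero-drift case, then invoke the strong $C([t_0,T];H_0^1)$-regularity of the Fokker--Planck flow for smooth $\alpha$ to pass to the limit pointwise; this is legitimate (for $m_0 \in H_0^1$ and $\alpha \in C^1$, the equation is a bounded perturbation of the Dirichlet heat flow and the interpolation $L^2_t(H^2\cap H_0^1)\cap H^1_t L^2 \hookrightarrow C_t H_0^1$ applies), but it forces you to add the subsequent approximation step replacing the optimal measurable selection by smooth controls.

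Second, the paper disposes of the cross term $-2\delta\int Dm_t\cdot\alpha\, m_t$ by Young's inequality \emph{before} the limit, absorbing half of $2\delta\int|Dm_t|^2$ and moving the residual $C\delta\|m_t\|_2^2$ to the right; you keep the term $2\delta Dm_0$ inside the argument of $H^R$ and peel it off at the end via the Lipschitz bound on $H^R$.  Both manipulations are exactly the ones motivated in Remark~\ref{rmk.viscosityintuition}, just applied in a different order.  The paper's route is a bit shorter because it never needs the parabolic regularity upgrade or the density of smooth controls, but your version is equally valid and arguably closer to the heuristic computation in that remark.
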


\begin{proof}
    Recall that  the proof of Proposition~\ref{prop.d1Lip} implies that  $U^R$ is uniformly continuous for each $R > 0$.
    
    Next, let $\Phi$ be a smooth test function, $\delta > 0$, and suppose that $(t_0,m_0) \in [0,T) \times \big( \sub \cap L^2 \big)$ satisfies \eqref{touchingfromabove}, but with $U^R$ replacing $U$. Fix a measurable  function $\alpha:\Omega\to \R^d$  such that $|\alpha|\leq R$ and let $m$ be the solution to 
 \begin{align*}           \partial_t m = \Delta m - \text{div}(m\alpha) \ \text{in} \ (t_0,T]\times \Omega, \quad m_{t_0}=m_0.  \quad m=0 \ \text{on} \ [t_0,T] \times \partial \Omega.
    \end{align*}
    By dynamic programming (Lemma \ref{lem.dpp}), for each $h > 0$, we have 
    \begin{equation} \label{Phi.dpp}
    \begin{split}
        \Phi(t_0,m_0)& + \delta \|m_0\|_2^2 \leq \Phi(t_0 + h, m_{t_0 + h}) \\[1.5mm]
       & + \delta \|m_{t_0 +h}\|_2^2 + \int_{t_0}^{t_0 + h} \bigg( \int_{\Omega} L\big(x,\alpha(x) \big)m_t(dx) + F(m_t) \bigg) dt.
    \end{split}
    \end{equation}
    Combining \eqref{Phi.dpp} with Proposition \ref{prop.FP} we obtain
    \begin{equation} \label{almost.dpp0}
    \begin{split}
        &- \int_{t_0}^{t_0 + h} \partial_t \Phi(t, m_t) dt + 2\delta \int_{t_0}^{t_0 + h} \int_{\Omega} \big( |Dm_t(x)|^2 - Dm_t \cdot \alpha \, m_t \big) dx dt 
     \\[1.5mm]
        &+ \int_{t_0}^{t_0 + h} D_x \frac{\delta \Phi}{\delta m}(m_t,x) \cdot Dm_t(x) dx  dt \\[1.5mm]
        &+ \int_{t_0}^{t_0+h} \int_{\Omega} \Big(- L(x,\alpha(x)) - D_x \frac{\delta \Phi}{\delta m}(m_t,x) \cdot \alpha(x) \Big)m_t(x) dx dt \leq 0. 
    \end{split}
    \end{equation}
    Dividing by $h$, applying Young's inequality, and  using the fact that $\|m_t\|_2^2 \leq C(\|\alpha\|_{\infty}) \|m_0\|_2^2$, we find
    \begin{equation} \label{almost.dpp}
    \begin{split}
        &- \frac{1}{h} \int_{t_0}^{t_0 + h} \partial_t \Phi(t, m_t) dt +  \frac{\delta}{h} \int_{t_0}^{t_0 + h} \int_{\Omega} |Dm_t(x)|^2 dx dt \\[1.5mm]
        &+ \frac{1}{h} \int_{t_0}^{t_0 + h} \int_{\Omega} D_x \frac{\delta \Phi}{\delta m}(m_t,x) \cdot Dm_t(x) dx  dt 
       \\
        &\quad + \frac{1}{h} \int_{t_0}^{t_0+h} \int_{\Omega} \Big(- L(x,\alpha(x)) - D_x \frac{\delta \Phi}{\delta m}(m_t,x) \cdot \alpha(x) \Big)m_t(x) dx dt \leq C(\|\alpha\|_{\infty}) \delta \|m_0\|_2^2.
    \end{split}
    \end{equation}
    
     Next,  for each $h > 0$, set 
     \begin{align*}
         \mu_h = \frac{1}{h} \int_{t_0}^{t_0 + h} m_t.
     \end{align*}
     Combining the bound $\|m_t\|_2^2 \leq C(\|\alpha\|_{\infty}) \|m_0\|_2^2$ with \eqref{almost.dpp} yields, %, for some $2^2 \leq C(\|\alpha\|_{\infty}) \|m_0\|_2^2$ with \eqref{almost.dpp} yields, 
     for some constant $C$, 
     \begin{align} \label{muhbound}
         \int_{\Omega} |D\mu_h|^2 dx = \int_{\Omega} \bigg| \frac{1}{h} \int_{t_0}^{t_0 + h} Dm_t dt \bigg|^2 dx \leq \frac{1}{h} \int_{t_0}^{t_0 + h} \int_{\Omega} |Dm_t|^2 dx dt \leq C.
     \end{align}
    In view of the assumed  continuity of $t \mapsto m_t$ in $L^2$, clearly we have $\mu_h \to m_0$ in $L^2$, and so \eqref{muhbound} implies that in fact $\mu_h \rightharpoonup m_0$ weakly in $H_0^1$.  In particular, this proves that $m_0 \in H_0^1$.

The final goal is to pass to the limit in \eqref{almost.dpp}, for which we start by noting that, in view of the fact that, as $t\to 0$,   $m_t \to m_0$ strongly in $L^2$, we have, in the limit $h\to 0 $, 
     \begin{equation} \label{easyterms}
     \begin{split}
         &\frac{1}{h} \int_{t_0}^{t_0 + h} \partial_t \Phi(t,m_t) dt \xrightarrow{h \downarrow{0}} \partial_t \Phi(t_0,m_0),
    \\
         &\frac{1}{h} \int_{t_0}^{t_0+h} \int_{\Omega} \Big( L(x,\alpha(x)) + D_x \frac{\delta \Phi}{\delta m}(m_t,x) \cdot \alpha(x) \Big) m_t(x) dx dt 
      \\
         &\qquad \to  \int_{\Omega} \Big(L(x, \alpha(x)) + D_x \frac{\delta \Phi}{\delta m}(m_0,x) \Big) m_0(x) dx dt.
     \end{split}
     \end{equation}
     Meanwhile, since  $\mu_h \rightharpoonup m_0$ weakly in $H_0^1$, \eqref{muhbound} yields 
     \begin{align} \label{h1bound}
         \int_{\Omega} |Dm_0|^2 dx \leq \liminf_{h \to 0} \frac{1}{h} \int_{t_0}^{t_0+h} \int_{\Omega} |Dm_t(x)|^2dx dt. 
     \end{align}
     Next, we estimate
     \begin{align*}
        &\Big| \frac{1}{h} \int_{t_0}^{t_0+ h} \int_{\Omega} D_x \frac{\delta \Phi}{\delta m}(m_t,x) \cdot Dm_t(x) dx dt  - \int_{\Omega} D_x \frac{\delta \Phi}{\delta m}(m_0,x) \cdot Dm_0(x) dx dt\Big|
        \\[1.5mm]
        &\quad \leq \Big| \frac{1}{h} \int_{t_0}^{t_0+ h} \int_{\Omega} \Big(D_x \frac{\delta \Phi}{\delta m}(m_t,x) -  D_x \frac{\delta \Phi}{\delta m}(m_0,x) \Big) \cdot Dm_t(x) dx dt\Big|  
        \\[1.5mm]
        &\qquad +  \Big|\int_{\Omega} D_x \frac{\delta \Phi}{\delta m}(m_0,x) \cdot \big(D \mu_h - Dm_0\big) dx\Big|
        \\[1.5mm]
        &\quad \leq \Big( \frac{1}{h} \int_{t_0}^{t_0 + h} \big| D_x \frac{\delta \Phi}{\delta m}(m_t,\cdot) - D_x \frac{\delta \Phi}{\delta m}(m_0,\cdot)\big|^2 \Big)^{1/2} \Big( \frac{1}{h} \int_{t_0}^{t_0 +h} |Dm_t(x)|^2 dx dt  \Big)^{1/2}
        \\[1.5mm]
        &\qquad +  \Big|\int_{\Omega} D_x \frac{\delta \Phi}{\delta m}(m_0,x) \cdot \big(D \mu_h - Dm_0\big) dx\Big|
     \end{align*}
     The first term in the tight hand side of the last inequality vanishes as $h \to 0$ because $\frac{1}{h} \int_{t_0}^{t_0 +h} |Dm_t(x)|^2 dx dt$ is bounded and $D_x \frac{\delta \Phi}{ \delta m}(m_t,\cdot) \to D_x \frac{\delta \Phi}{ \delta m}(m_0,\cdot)$ strongly in $L^2$. The second term vanishes because of the weak  in $H^1_0$ convergence of the $\mu_h $'s to $m_0$. We thus have
     \begin{align} \label{hardterm}
   \underset{h\to 0} \lim \; \frac{1}{h} \int_{t_0}^{t_0+ h} \int_{\Omega} D_x \frac{\delta \Phi}{\delta m}(m_t,x) \cdot Dm_t(x) dx dt = \int_{\Omega} D_x \frac{\delta \Phi}{\delta m}(m_0,x) \cdot Dm_0(x) dx dt. 
     \end{align}
     Combining \eqref{almost.dpp}, \eqref{easyterms}, \eqref{h1bound}, and \eqref{hardterm}, we find that
     \begin{align} \label{subsol.dpp}
        &- \partial_t \Phi(t_0,m_0) dt + \delta \int_{\Omega} |Dm_0(x)|^2 dx + \int_{\Omega} D_x \frac{\delta \Phi}{\delta m}(m_0,x) \cdot Dm_0(x) dx  dt 
      \nonumber   \\
        &\quad + \int_{\Omega} \Big(- L(x,\alpha(x)) - D_x \frac{\delta \Phi}{\delta m}(m_0,x) \cdot \alpha(x) \Big)m_0(x) dx \leq C\delta \|m_0\|_2^2, 
    \end{align}
    and, finally, after  choosing $\alpha$ such that 
    \begin{align*}
        \alpha(x) \in \text{argmax} \Big\{ B_R \ni a \mapsto - L(x,a) - D_x \frac{\delta \Phi}{\delta m}(m_0,x) \cdot a \Big\}
    \end{align*}
    completes the proof.  
    \end{proof}

Now we deal with the supersolution property which is, typically, the harder case.

\begin{proposition} \label{prop.u.supersolution}
    For each $R> 0$, value function $U^R$ is a viscosity supersolution to \eqref{hjbinfR}.
\end{proposition}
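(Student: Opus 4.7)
The plan is to mirror the subsolution argument of Proposition \ref{prop.u.subsolution}, with the roles of ``any admissible control'' and ``near-optimal control'' swapped. Fix a smooth test function $\Phi$, a $\delta > 0$, and a point $(t_0, m_0) \in [0,T) \times (\sub \cap L^2)$ at which $U^R - \Phi + \delta \|\cdot\|_2^2$ attains its infimum. For $h \in (0,T-t_0)$ and $\epsilon > 0$, the dynamic programming principle (Lemma \ref{lem.dpp}) supplies an admissible pair $(m^h, \alpha^h)$ with $|\alpha^h| \leq R$ satisfying
\[
U^R(t_0, m_0) + \epsilon h \;\geq\; U^R(t_0+h, m^h_{t_0+h}) + \int_{t_0}^{t_0+h}\Big[\int_\Omega L(x,\alpha^h(t,x)) m^h_t(dx) + F(m^h_t)\Big]dt.
\]
Combining this with the touching-from-below inequality applied at $(t_0+h, m^h_{t_0+h})$ and invoking the expansion formulas \eqref{Phi.expansion}--\eqref{l2.decrease} of Proposition \ref{prop.FP} (the latter requires $m_0 \in L^2$ and produces the decisive $-2\delta \int\int |Dm^h_t|^2$ term), one arrives at
\begin{align*}
&\int_{t_0}^{t_0+h}\Big[\partial_t\Phi + \int \Delta_x \tfrac{\delta\Phi}{\delta m}\, m^h_t + \int \big(L(\cdot,\alpha^h) + D_x \tfrac{\delta\Phi}{\delta m}\cdot \alpha^h\big) m^h_t + F(m^h_t)\Big]dt \\
&\qquad + 2\delta \int_{t_0}^{t_0+h}\int |Dm^h_t|^2\, dx\, dt - 2\delta \int_{t_0}^{t_0+h}\int Dm^h_t \cdot \alpha^h m^h_t\, dx\, dt \;\leq\; \epsilon h.
\end{align*}

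The next step is to dispose of the control $\alpha^h$. The Legendre--Fenchel inequality $L(x,a) + p\cdot a \geq -H^R(x,p)$, valid for all $|a|\leq R$, converts the $L$-plus-linear piece into the lower bound $-\int H^R\big(\cdot, D_x \tfrac{\delta\Phi}{\delta m}\big)m^h_t$, while Young's inequality gives
\[
-2\delta \int Dm^h_t \cdot \alpha^h m^h_t\, dx \;\geq\; -\delta \int |Dm^h_t|^2 dx - \delta R^2 \|m^h_t\|_2^2.
\]
After these substitutions every trace of $\alpha^h$ has been absorbed into $-\int H^R m^h_t$ plus the lower-order error $\delta R^2 \|m^h_t\|_2^2$, and the net gradient contribution on the left is $+\delta \int\int |Dm^h_t|^2\, dx\, dt$.

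I would then divide by $h$, take $\epsilon = h$, and pass to the limit $h \downarrow 0$. The uniform bound $|\alpha^h|\leq R$ together with parabolic estimates yields $\sup_{t_0 \leq t\leq t_0+h}\|m^h_t - m_0\|_2 \to 0$, so the continuity of the derivatives of $\Phi$ built into Definition \ref{def.testfunction}, together with the Lipschitz continuity of $F$, permits passage to the limit in every term except the gradient integral. The main obstacle, and the very reason for introducing the $\delta\|\cdot\|_2^2$ penalization, will be to establish \emph{simultaneously} that $m_0 \in H_0^1$ and that $\int |Dm_0|^2 \leq \liminf_{h\to 0}\tfrac{1}{h}\int_{t_0}^{t_0+h}\int |Dm^h_t|^2 dx\, dt$. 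I would overcome this by introducing the time-averaged density $\mu_h \coloneqq \tfrac{1}{h}\int_{t_0}^{t_0+h} m^h_t\, dt$; Jensen's inequality gives $\int |D\mu_h|^2 dx \leq \tfrac{1}{h}\int_{t_0}^{t_0+h}\int |Dm^h_t|^2 dx\, dt$, which is bounded as $h \to 0$ by the estimate just derived. Since $\mu_h \to m_0$ strongly in $L^2$, the $H_0^1$ bound promotes this to weak convergence $\mu_h \rightharpoonup m_0$ in $H_0^1$, so $m_0 \in H_0^1$, and the desired lower-semicontinuity inequality follows. Taking $\limsup$ on the left of the divided inequality then produces
\[
\partial_t\Phi(t_0,m_0) + \int \Delta_x \tfrac{\delta\Phi}{\delta m}(t_0,m_0) m_0 - \int H^R\big(\cdot, D_x \tfrac{\delta\Phi}{\delta m}(t_0,m_0)\big)m_0 + F(m_0) + \delta \int |Dm_0|^2 \;\leq\; \delta R^2 \|m_0\|_2^2,
\]
which rearranges exactly to the supersolution inequality \eqref{supersol.boud} for \eqref{hjbinfR} with constant $C = R^2$.
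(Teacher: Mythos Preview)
Your argument is correct and follows the same overall template as the paper: combine the dynamic programming principle with the touching condition, expand $\Phi$ and $\|m\|_2^2$ via Proposition~\ref{prop.FP}, eliminate the control using the Legendre--Fenchel and Young inequalities, and use the time-averaged density $\mu_h$ to extract the $H_0^1$ bound and the $\liminf$ inequality for the gradient term.

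The one genuine difference is that the paper works with an \emph{exact} optimizer $(m,\alpha)$ for $U^R(t_0,m_0)$, whose existence follows from the analogue of Proposition~\ref{prop.optconditions}. This yields a single fixed curve $t\mapsto m_t$, so continuity $m_t\to m_0$ (in $\bd$ or in $L^2$) as $t\downarrow t_0$ is immediate and the limit $h\to 0$ is clean. Your approach with $\epsilon h$-near-optimal controls $(m^h,\alpha^h)$ is slightly more elementary in that it does not invoke existence of minimizers, but the curves now depend on $h$, so the limit needs uniformity. Your assertion that parabolic estimates give $\sup_{t_0\le t\le t_0+h}\|m^h_t-m_0\|_2\to 0$ is a bit stronger than what is directly available a priori (since $m_0\in H_0^1$ is precisely what you are proving), and a little delicate to justify with $h$-dependent drifts. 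Fortunately you do not actually need it: testing the Fokker--Planck equation against $\phi\in C_c^\infty(\Omega)$ gives $|\int\phi\,(m^h_t-m_0)|\le C_\phi(t-t_0)$ uniformly in $h$, which yields $\bd(m^h_t,m_0)\to 0$ and $\mu_h\rightharpoonup m_0$ weakly in $L^2$; together with the energy bound $\|m^h_t\|_2^2\le e^{R^2(t-t_0)}\|m_0\|_2^2$ these suffice for every term in your limit. With that refinement your proof goes through.
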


\begin{proof}
     Let $\Phi \in C^{1,2}([0,T] \times H^{-1})$, $\delta > 0$, and suppose that $(t_0,m_0) \in [0,T) \times \big( \sub \cap L^2 \big)$ satisfies \eqref{touchingfrombelow}, but with $U^R$ replacing $U$. Let $(m,\alpha)$ be an optimizer for the problem defining $U^R(t_0,m_0)$, so that by dynamic programming (Lemma \ref{lem.dpp}), we have,  for any $h \in (0,T-t_0)$, 
     \begin{align*}
         U^R(t_0,m_0) = \int_{t_0}^{t_0 + h}\bigg( \int_{\Omega} L\big( x, \alpha(t,x) \big)m_t(dx) + F(m_t) \bigg) dt + U^R(t_0 + h, m_{t_0 + h}).
     \end{align*}
    It follows that
     \begin{align*}
         \Phi(t_0,m_0) + \delta \|m_0\|_2^2 \geq \int_{t_0}^{t_0 + h}\bigg( \int_{\Omega} L\big( x, \alpha(t,x) \big)m_t(dx) + F(m_t) \bigg) dt + \Phi(t_0,m_0) + \delta \|m_{t_0 + h}\|_2^2.
     \end{align*}
     Arguing as in the proof of Proposition \ref{prop.u.subsolution}, we deduce that 
     \begin{align} \label{almost.dpp2}
     & - \frac{1}{h} \int_{t_0}^{t_0 + h} \partial_t \Phi(t, m_t) dt - \frac{\delta}{h} \int_{t_0}^{t_0 + h} \int_{\Omega} |Dm_t(x)|^2 dx dt + \frac{1}{h} \int_{t_0}^{t_0 + h} \int_{\Omega} D_x \frac{\delta \Phi}{\delta m}(m_t,x) \cdot Dm_t(x) dx  dt 
      \nonumber   \\
        &\qquad  + \frac{1}{h} \int_{t_0}^{t_0+h} \int_{\Omega}H\Big(x, D_x \frac{\delta \Phi}{\delta m}(m_t,x) \Big)m_t(x) dx dt
      \nonumber   \\
        &\geq - \frac{1}{h} \int_{t_0}^{t_0 + h} \partial_t \Phi(t, m_t) dt - \frac{\delta}{h} \int_{t_0}^{t_0 + h} \int_{\Omega} |Dm_t(x)|^2 dx dt + \frac{1}{h} \int_{t_0}^{t_0 + h} \int_{\Omega} D_x \frac{\delta \Phi}{\delta m}(m_t,x) \cdot Dm_t(x) dx  dt 
      \nonumber   \\
        &\quad + \frac{1}{h} \int_{t_0}^{t_0+h} \int_{\Omega} \Big(- L(x,\alpha(x)) - D_x \frac{\delta \Phi}{\delta m}(m_t,x) \cdot \alpha(t,x) \Big)m_t(x) dx dt \geq -C(\|\alpha\|_{\infty}) \delta \|m_0\|_2^2.
    \end{align}
    Following again the proof of Proposition~\ref{prop.u.subsolution}, we find that $\mu_h = \frac{1}{h} \int_{t_0}^{t_0 + h} m_t dt$
    is bounded in $H_0^1$, and converges to $m_0$ as $h \to 0$, so that in particular $m_0 \in H_0^1$.

    We now pass to the limit in \eqref{almost.dpp2} exactly as in the proof of Proposition \ref{prop.subsol}. The only new term is the one containing the Hamiltonian. For this term, we note that the Lipschitz continuity of $H^R$ and the fact that, as $t\to 0$,  $D_x \frac{\delta \Phi}{\delta m} (m_{t},\cdot) \to D_x \frac{\delta \Phi}{\delta m} (m_{t_0},\cdot)$ strongly in $L^2$ imply that, as $t\to 0$ and strongly in $L^2$, 
    \begin{align*}
        H^R\Big(\cdot, D_x \frac{\delta \Phi}{\delta m} (m_{t},\cdot)\Big) \to  H^R\Big(\cdot, D_x \frac{\delta \Phi}{\delta m} (m_{t_0},\cdot) \Big).
 %[\text{strongly in $L^2$}]{t \downarrow t_0} H^R\Big(\cdot, D_x \frac{\delta \Phi}{\delta m} (m_{t_0},\cdot) \Big).
    \end{align*}
    Since  $m_t \to m_0$ strongly in $L^2$ as $t\to t_0$, it follows that 
    \begin{align*}
        \frac{1}{h} \int_{t_0}^{t_0 + h} \int_{\Omega} H^R \Big(x, D_x \frac{\delta \Phi}{\delta m} (m_t,x) \Big) m_t dx dt \xrightarrow{h \downarrow{0}} \int_{\Omega} H^R\Big(x,D_x \frac{\delta \Phi}{\delta m} (m_0,x) \Big)m_0(x) dx.
    \end{align*}
    Hence,  we can indeed pass  in  the  $h \to 0$ limit in \eqref{almost.dpp2} to find that
     \begin{align*}
      &  - \partial_t \Phi(t_0,m_0) - \delta \int_{\Omega} |Dm_0(x)|^2 dx + \int_{\Omega} D_x \frac{\delta \Phi}{\delta m}(t_0,m_0,x) \cdot Dm_0(x)dx
        \\
       &\qquad  + \int_{\Omega} H^R\Big( x, D_x \frac{\delta \Phi}{\delta m}(t_0,m_0,x)\Big) m_0(x) dx \geq F(m_0) - C \delta \|m_0\|_{L^2}^2, 
    \end{align*}
    which completes the proof.  
    \end{proof}

We are now in position to give the proof of the second main result of the paper.

\begin{proof}[Proof of Theorem \ref{thm.value}.]
    It follows from a straightforward combination of Proposition~\ref{prop.d1Lip}, Corollary~\ref{cor.Ur}, and Proposition~ \ref{prop.u.subsolution} and Proposition~ \ref{prop.u.supersolution}  
    \end{proof}
%\end{document}

As we already mentioned in the introduction, for the convergence problem, it is convenient to work in $N-$dimensional sub-probability empirical measures $\sub^N = \{m_{\bx}^{N,K} : K = 1,...,N, \, \bx \in \Omega^K\}$ introduced in \eqref{def.pnsub}
By Theorem~\ref{thm.vnk.lip}, there is a constant $C_0$ which is independent of $N$ such that, for all
$t,s \in [0,T] $ and $m,n \in \sub^N$, the map $\tilde{V}^N$ defined in \eqref{def.tildev} satisfies the bound %\tilde{V}^N$ satisfies
\begin{align} \label{tildevn.lip}
    |\tilde{V}^{N}(t,m) - \tilde{V}^N(s,n)| \leq C_0\Big(|t-s|^{1/2} + \bd(m,n) \Big). % \quad t,s \in [0,T], \quad m,n \in \sub^N.
\end{align}

Hence, we can just extend $\tilde{V}^N$ to a map $\hat{V}^N : [0,T] \times \sub \to \R$ such that,  for all
$t,s \in [0,T] $ and $m,n \in \sub$, 
\begin{align*}
    |\hat{V}^{N}(t,m) - \hat{V}^N(s,n)| \leq C_0\Big(|t-s|^{1/2} + \bd(m,n) \Big). % \quad t,s \in [0,T], \quad m,n \in \sub, 
\end{align*}
Because $(\sub, \bd)$ is compact, we can apply the Arzel\'a-Ascoli Theorem to the sequence $(\hat{V}^N)_{N \in \N}$, and obtain the following fact, which we record as a lemma.

\begin{lemma} \label{lem.compactness}
    For any sequence $(N_k)_{k \in \N}$, there exists a subsequence $(N_{k_j})_{j \in \N}$, and a function $V : [0,T] \times \sub \to \R$ satisfying, $t,s \in [0,T] $ and $m,n \in \sub$,
   \begin{align} \label{vlip}
    |V(t,m) - V(s,n)| \leq C_0\Big(|t-s|^{1/2} + \bd(m,n) \Big),  
\end{align}
    such that 
    \begin{align} \label{limitpoint}
        \lim_{j \to \infty} \max_{K = 1,...,N_{k_j}} \sup_{t \in [0,T], \, \bx \in \Omega^K} \Big|V(t,m_{\bx}^{N_{k_j},K}) - V^{N_{k_j},K}(t,\bx) \Big| = 0.
    \end{align}
\end{lemma}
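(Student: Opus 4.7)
The statement is a standard Arzel\`a--Ascoli compactness argument applied to the sequence $(\hat V^N)_{N \in \N}$, followed by a bookkeeping check that the uniform limit $V$ satisfies the ``limit point'' condition \eqref{limitpoint}. The plan is to carry out three steps: (i) verify equicontinuity and uniform boundedness of $(\hat V^N)$ on the compact metric space $[0,T] \times (\sub, \bd)$; (ii) apply Arzel\`a--Ascoli to extract a uniformly convergent subsequence with limit $V$; (iii) deduce \eqref{limitpoint} from the identity $\hat V^N(t,m_{\bx}^{N,K}) = V^{N,K}(t,\bx)$ for all empirical measures.

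For step (i), equicontinuity is immediate from the construction of $\hat V^N$: by its infimal definition, $\hat V^N$ satisfies the Lipschitz bound with constant $C_0$ uniformly in $N$, so in particular
\[
|\hat V^N(t,m) - \hat V^N(s,n)| \leq C_0\big(|t-s|^{1/2} + \bd(m,n)\big).
\]
For uniform boundedness, I would first note that $V^{N,1}(t,x)$ is bounded uniformly in $N$, $t$ and $x$: indeed, Lemma \ref{lem.crudebound} gives $|V^{N,1}(t,x) - V^{N,0}(t)| \leq C/N$, and $V^{N,0}(t) = G(\bzero) + (T-t)F(\bzero)$ is clearly bounded. Fixing any $x_* \in \Omega$, Theorem \ref{thm.vnk.lip} then yields
\[
|V^{N,K}(t,\bx)| \leq |V^{N,1}(t,x_*)| + C_0\bigl(\text{diam of } \sub\bigr),
\]
which bounds $\tilde V^N$ uniformly on $[0,T] \times \sub^N$, and hence $\hat V^N$ uniformly on $[0,T] \times \sub$ (the infimal extension preserves the $L^\infty$ bound up to the Lipschitz constant times the bounded diameter of $\sub$).

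For step (ii), since $(\sub,\bd)$ is compact (as noted in Section \ref{sec.prelim}), the product $[0,T] \times \sub$ is a compact metric space, and the classical Arzel\`a--Ascoli theorem produces, from any sequence $(N_k)$, a subsequence $(N_{k_j})$ along which $\hat V^{N_{k_j}}$ converges uniformly to some continuous $V : [0,T] \times \sub \to \R$. The Lipschitz estimate \eqref{vlip} passes to the uniform limit. For step (iii), for any $K \in \{1,\ldots,N_{k_j}\}$ and $\bx \in \Omega^K$ one has $\hat V^{N_{k_j}}(t,m_{\bx}^{N_{k_j},K}) = \tilde V^{N_{k_j}}(t,m_{\bx}^{N_{k_j},K}) = V^{N_{k_j},K}(t,\bx)$ by the construction of $\hat V^N$ (the infimum is attained at $n = m_{\bx}^{N_{k_j},K}$ using the Lipschitz property of $\tilde V^N$ on $\sub^N$). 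Therefore
\[
\max_{K} \sup_{t,\bx} \bigl|V(t,m_{\bx}^{N_{k_j},K}) - V^{N_{k_j},K}(t,\bx)\bigr| \leq \sup_{(t,m) \in [0,T] \times \sub} \bigl|V(t,m) - \hat V^{N_{k_j}}(t,m)\bigr|,
\]
and the right-hand side tends to zero by uniform convergence, giving \eqref{limitpoint}.

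There is no real obstacle here; the only minor technical point requiring care is the identity $\hat V^{N_{k_j}} = \tilde V^{N_{k_j}}$ on $\sub^{N_{k_j}}$, which must be checked to ensure the uniform estimate on $[0,T] \times \sub$ actually controls the discrete supremum in \eqref{limitpoint}. This follows directly from the infimal formula and the Lipschitz continuity of $\tilde V^N$ on its domain.
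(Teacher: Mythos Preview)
Your proposal is correct and follows exactly the approach the paper indicates: the paper explicitly states that the lemma is obtained by applying Arzel\`a--Ascoli to the sequence $(\hat V^N)_{N \in \N}$ on the compact space $[0,T]\times(\sub,\bd)$ and ``records [it] as a lemma without discussing further its proof.'' You have simply filled in the routine details (uniform boundedness, the identity $\hat V^N|_{\sub^N}=\tilde V^N$, and passage to the limit), all of which are correct.
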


We now complete the proof of Theorem \ref{thm.conv}. 

\begin{proof}[Proof of Theorem \ref{thm.conv}.]
    Pick $R$ large enough so that Proposition \ref{prop.subsol} holds. Combining  Theorem~\ref{thm.comparison} about the uniqueness of viscosity solution to \eqref{hjbinfR}  together with Proposition~\ref{prop.subsol}, Proposition~\ref{prop.u.subsolution} and Proposition~ \ref{prop.u.supersolution}, we find that for any sequence $(N_k)_{k \in \N}$, there exists a subsequence $(N_{k_j})_{j \in \N}$,
    such that 
    \begin{align*}
        \lim_{j \to \infty} \max_{K = 1,...,N_{k_j}} \sup_{t \in [0,T], \, \bx \in \Omega^K} \Big|U(t,m_{\bx}^{N_{k_j},K}) - V^{N_{k_j},K}(t,\bx) \Big| = 0.
    \end{align*}The result follows.  
    \end{proof}

\bibliographystyle{alpha}
\bibliography{boundary}

\end{document}